\setlist{labelindent=1pt,itemsep=.5em}
\setlist[itemize]{leftmargin=1.2cm}
\setlist[enumerate]{itemindent=0em,leftmargin=1.2cm}
\setlist[enumerate,1]{label={\upshape(\roman*)}}
\newcommand{\subjclass}[2][2020]{%
  \let\@oldtitle\@title%
  \gdef\@title{\@oldtitle\footnotetext{#1 \emph{Mathematics subject classification}: #2}}%
}
\newcommand{\keywords}[1]{%
  \let\@@oldtitle\@title%
  \gdef\@title{\@@oldtitle\footnotetext{\emph{Keywords}: #1.}}%
}
	\newtheorem{thm}{Theorem}[section]
	\newtheorem{cor}[thm]{Corollary}
	\newtheorem{pro}[thm]{Proposition}
	\newtheorem{defi}[thm]{Definition}
	\newtheorem{ex}[thm]{Example}
	\newtheorem{rmk}[thm]{Remark}
\newcommand{\A}{\rm A}
\newcommand{\mrm}[1]{{\rm #1}}
\newcommand{\End}{\mrm{End}} 
\newcommand{\id}{\mrm{id}} 
\title{Nearly associative and nearly Hom-associative algebras and bialgebras}
\author{Mafoya Landry Dassoundo$^{1}$,
Sergei Silvestrov$^{2}$ \\
\small{$^{1}$Chern Institute of Mathematics and LPMC,
Nankai University, Tianjin 300071, China \authorcr
e-mail: dassoundo@yahoo.com \authorcr
$^{2}$ Division of Mathematics and Physics,
School of Education, Culture and Communication, \authorcr
M\"{a}lardalen University, Box 883, 72123 V\"{a}steras, Sweden. \authorcr
e-mail: sergei.silvestrov@mdh.se}}
\subjclass[2020]{17B61, 17D30, 17D25, 17B62}
\keywords{nearly Hom-associative  algebra, nearly associative algebra, bialgebra, bimodule}
\date{}
\begin{document}

\maketitle

\abstract{Basic definitions and properties of nearly associative algebras are described.
Nearly associative algebras are proved to be Lie-admissible algebras.
Two-dimensional nearly associative algebras are classified, and its main classes are derived.
The bimodules, matched pairs and Manin triple  of a nearly associative
algebras are derived and their equivalence with nearly associative bialgebras is proved.
Basic definitions and properties of nearly Hom-associative  algebras are described.
Related bimodules and matched pairs are given, and associated identities are established.}

\footnote[0]{{\it Corresponding author}: Sergei Silvestrov, sergei.silvestrov@mdh.se}


\section{Introduction}
\label{sec:intro}
An algebra $A$ with a bilinear
product $\cdot: A\times A\rightarrow A$ is not necessarily associative or possibly non-associative if
possibly there exist $x,y,z\in A$ such that $(x\cdot y)\cdot z-x\cdot(y\cdot z)\neq 0.$
If such $x,y,z\in A$ exist, then algebra is not associative.
The term non-associative algebras is used often to mean all possibly non-associative algebras, including also the associative algebras. Associative algebras, Lie algebras, and Jordan algebras are
well-known sub-classes of non-associative algebras in the sense of possibly not associative algebras
\cite{DassoundoSilvestrov:nearlyhomass:Schafer:intrononasal}.

Hom-algebraic structures originated from  quasi-deformations of Lie algebras of
vector fields which gave rise to quasi-Lie algebras, defined as  generalized
Lie structures in which the skew-symmetry and Jacobi conditions are twisted.
Hom-Lie algebras and more general quasi-Hom-Lie algebras where introduced first by Silvestrov and his students Hartwig and Larsson in \cite{DassoundoSilvestrov:nearlyhomass:HaLaSil}, where the general quasi-deformations and discretizations of Lie algebras of vector fields using general twisted derivations, $\sigma$-derivations, and a general method for construction of deformations of Witt and Virasoro type algebras based on twisted derivations have been developed. The initial motivation came from examples of $q$-deformed Jacobi identities discovered in $q$-deformed versions and other discrete modifications of differential calculi and homological algebra, $q$-deformed Lie algebras and other algebras important in string theory, vertex models in conformal field theory, quantum mechanics and quantum field theory, such as the $q$-deformed Heisenberg algebras, $q$-deformed oscillator algebras, $q$-deformed Witt, $q$-deformed Virasoro algebras and related $q$-deformations of infinite-dimensional algebras  \cite{DassoundoSilvestrov:nearlyhomass:AizawaSaito,DassoundoSilvestrov:nearlyhomass:ChaiElinPop,DassoundoSilvestrov:nearlyhomass:ChaiIsLukPopPresn,DassoundoSilvestrov:nearlyhomass:ChaiKuLuk,DassoundoSilvestrov:nearlyhomass:ChaiPopPres,DassoundoSilvestrov:nearlyhomass:CurtrZachos1,DassoundoSilvestrov:nearlyhomass:DamKu,DassoundoSilvestrov:nearlyhomass:DaskaloyannisGendefVir,DassoundoSilvestrov:nearlyhomass:Hu,DassoundoSilvestrov:nearlyhomass:Kassel92,
DassoundoSilvestrov:nearlyhomass:LiuKQuantumCentExt,DassoundoSilvestrov:nearlyhomass:LiuKQCharQuantWittAlg,DassoundoSilvestrov:nearlyhomass:LiuKQPhDthesis}.

Possibility of studying, within the same framework, $q$-deformations of Lie algebras and such well-known generalizations of Lie algebras as the color and super Lie algebras provided further general motivation for development of quasi-Lie algebras and subclasses of quasi-Hom-Lie algebras and Hom-Lie algebras. The general abstract quasi-Lie algebras and the subclasses of quasi-Hom-Lie algebras and Hom-Lie algebras, as well as their color (graded) counterparts, color (graded) quasi-Lie algebras, color (graded) quasi-Hom-Lie algebras and color (graded) Hom-Lie algebras, including in particular the super quasi-Lie algebras, super quasi-Hom-Lie algebras, and super Hom-Lie algebras, have been introduced in
\cite{DassoundoSilvestrov:nearlyhomass:HaLaSil,
DassoundoSilvestrov:nearlyhomass:LarssonSilvJA2005:QuasiHomLieCentExt2cocyid,
DassoundoSilvestrov:nearlyhomass:LarssonSilvCM2005:QuasiLieAl,
DassoundoSilvestrov:nearlyhomass:LSGradedquasiLiealg,
DassoundoSilvestrov:nearlyhomass:SigSilGrquasiLieWitttype:CzJPhys2006,DassoundoSilvestrov:nearlyhomass:SigSilv:LiecolHomLieWittcex:GLTbdSpr2009}.
In \cite{DassoundoSilvestrov:nearlyhomass:MakhloufSilvestrov:homstructure}, Hom-associative algebras were introduced, generalizing associative algebras by twisting the associativity law by a linear map. Hom-associative algebra is a triple $(A, \cdot, \alpha)$ consisting of
a linear space $A$, a bilinear product $\cdot:A\times A\rightarrow A$
and a linear map $\alpha: A\rightarrow A$, satisfying
$a_{\alpha,\cdot}(x,y,z)=(x\cdot y)\cdot \alpha(z)-\alpha(x)\cdot(y\cdot z)=0,$
for any $x,y,z\in A$.
In \cite{DassoundoSilvestrov:nearlyhomass:MakhloufSilvestrov:homstructure}, alongside Hom-associative algebras, the Hom-Lie admissible algebras generalizing Lie-admissible algebras, were introduced as Hom-algebras such that the commutator product, defined using the multiplication in a Hom-algebra, yields a Hom-Lie algebra, and also Hom-associative algebras were shown to be Hom-Lie admissible. Moreover, in \cite{DassoundoSilvestrov:nearlyhomass:MakhloufSilvestrov:homstructure}, more general $G$-Hom-associative algebras including Hom-associative algebras, Hom-Vinberg algebras (Hom-left symmetric algebras), Hom-pre-Lie algebras (Hom-right symmetric algebras), and some other Hom-algebra structures, generalizing $G$-associative algebras, Vinberg and pre-Lie algebras respectively, have been introduced
and shown to be Hom-Lie admissible, meaning that for these classes of Hom-algebras, the operation of taking commutator leads to Hom-Lie algebras as well. Also, flexible Hom-algebras have been introduced, connections to Hom-algebra generalizations of derivations and of adjoint maps have been noticed, and some low-dimensional Hom-Lie algebras have been described.
The enveloping algebras of Hom-Lie algebras were considered in \cite{DassoundoSilvestrov:nearlyhomass:Yau:HomEnv} using combinatorial objects of weighted binary trees. In \cite{DassoundoSilvestrov:nearlyhomass:HeMaSiUnAlHomAss}, for Hom-associative algebras and Hom-Lie algebras, the envelopment problem, operads, and the Diamond Lemma and Hilbert series for the Hom-associative operad and free algebra have been studied. Strong Hom-associativity yielding a confluent rewrite system and a basis for the free strongly hom-associative algebra has been considered in \cite{DassoundoSilvestrov:nearlyhomass:He:stronghomassociativity}. An explicit constructive way, based on free Hom-associative algebras with involutive twisting, was developed in \cite{DassoundoSilvestrov:nearlyhomass:GuoZhZheUEPBWHLieA} to obtain the universal enveloping algebras and Poincar{\'e}-Birkhoff-Witt type theorem for Hom-Lie algebras with involutive twisting map. Free
Hom-associative color algebra on a Hom-module and enveloping algebra of color Hom-Lie algebras with involutive twisting and also with more general conditions on the powers of twisting map was constructed, and Poincar{\'e}-Birkhoff-Witt type theorem was obtained in \cite{DassoundoSilvestrov:nearlyhomass:ArmakanSilvFarh:envalcolhomLieal,DassoundoSilvestrov:nearlyhomass:ArmakanSilvFarh:envalcerttypecolhomLieal}.
It is worth noticing here that, in the subclass of Hom-Lie algebras, the  skew-symmetry is untwisted, whereas the Jacobi identity is twisted by a single linear map and contains three terms as in Lie algebras, reducing to ordinary Lie algebras when the twisting linear map is the identity map.

Hom-algebra structures include their classical counterparts and open new broad possibilities for deformations, extensions to Hom-algebra structures of representations, homology, cohomology and formal deformations, Hom-modules and hom-bimodules, Hom-Lie admissible Hom-coalgebras, Hom-coalgebras, Hom-bialgebras, Hom-Hopf algebras, $L$-modules, $L$-comodules and Hom-Lie quasi-bialgebras, $n$-ary generalizations of biHom-Lie algebras and biHom-associative algebras and generalized derivations, Rota-Baxter operators, Hom-dendriform color algebras, Rota-Baxter bisystems and covariant bialgebras, Rota-Baxter cosystems, coquasitriangular mixed bialgebras, coassociative Yang-Baxter pairs, coassociative Yang-Baxter equation and generalizations of Rota-Baxter systems and algebras, curved $\mathcal{O}$-operator systems and their connections with tridendriform systems and pre-Lie algebras, BiHom-algebras, BiHom-Frobenius algebras and double constructions, infinitesimal biHom-bialgebras and Hom-dendriform $D$-bialgebras, Hom-algebras has been considered from a category theory point of view \cite{DassoundoSilvestrov:nearlyhomass:AmmarEjbehiMakhlouf:homdeformation,
DassoundoSilvestrov:nearlyhomass:Bakayoko:LaplacehomLiequasibialg,
DassoundoSilvestrov:nearlyhomass:Bakayoko:LmodcomodhomLiequasibialg,
DassoundoSilvestrov:nearlyhomass:BakBan:bimodrotbaxt,
DassoundoSilvestrov:nearlyhomass:BakyokoSilvestrov:HomleftsymHomdendicolorYauTwi,
DassoundoSilvestrov:nearlyhomass:BakyokoSilvestrov:MultiplicnHomLiecoloralg,
DassoundoSilvestrov:nearlyhomass:BenMakh:Hombiliform,
DassoundoSilvestrov:nearlyhomass:BenAbdeljElhamdKaygorMakhl201920GenDernBiHomLiealg,
DassoundoSilvestrov:nearlyhomass:CaenGoyv:MonHomHopf,
DassoundoSilvestrov:nearlyhomass:GrMakMenPan:Bihom1,
DassoundoSilvestrov:nearlyhomass:HassanzadehShapiroSutlu:CyclichomolHomasal,
DassoundoSilvestrov:nearlyhomass:HounkonnouDassoundo:centersymalgbialg,
DassoundoSilvestrov:nearlyhomass:HounkonnouHoundedjiSilvestrov:DoubleconstrbiHomFrobalg,
DassoundoSilvestrov:nearlyhomass:kms:narygenBiHomLieBiHomassalgebras2020,
DassoundoSilvestrov:nearlyhomass:LarssonSigSilvJGLTA2008,
DassoundoSilvestrov:nearlyhomass:LarssonSilvJA2005:QuasiHomLieCentExt2cocyid,
DassoundoSilvestrov:nearlyhomass:LarssonSilvestrov:Quasidefsl2twder,
DassoundoSilvestrov:nearlyhomass:MaMakhSil:CurvedOoperatorSyst,
DassoundoSilvestrov:nearlyhomass:MaMakhSil:RotaBaxbisyscovbialg,
DassoundoSilvestrov:nearlyhomass:MaMakhSil:RotaBaxCosyCoquasitriMixBial,
DassoundoSilvestrov:nearlyhomass:MakhSil:HomHopf,DassoundoSilvestrov:nearlyhomass:MakhSilv:HomDeform,
DassoundoSilvestrov:nearlyhomass:MakhSilv:HomAlgHomCoalg,
DassoundoSilvestrov:nearlyhomass:MakhloufYau:RotaBaHomLieadm,
DassoundoSilvestrov:nearlyhomass:RichardSilvestrovJA2008,
DassoundoSilvestrov:nearlyhomass:RichardSilvestrovGLTbnd20092009,
DassoundoSilvestrov:nearlyhomass:ShengBai:homLiebialg,
DassoundoSilvestrov:nearlyhomass:Sheng:homrep,
DassoundoSilvestrov:nearlyhomass:SilvestrovParadigmQLieQhomLie2007,
DassoundoSilvestrov:nearlyhomass:Yau:ModuleHomalg,
DassoundoSilvestrov:nearlyhomass:Yau:HomEnv,
DassoundoSilvestrov:nearlyhomass:Yau:HomHom,
DassoundoSilvestrov:nearlyhomass:Yau:HombialgcomoduleHomalg,
DassoundoSilvestrov:nearlyhomass:Yau:HomYangBaHomLiequasitribial}.

This paper is organized as follows. In Section~\ref{section1},
basic definitions and  fundamental identities and some elementary examples
of nearly associative algebras are given.
In Section~\ref{section2}, we derive the classification of the two-dimensional
nearly associative algebras and main classes are provided.
In Section~\ref{section3},
bimodules, duals bimodules  and matched pair of nearly associative algebras
are established and related identities are derived and
proved. In Section~\ref{section4}, Manin triple of nearly associative algebras
is given and its equivalence to the nearly associative bialgebras is derived.
In Section \ref{sec:homLieadmGHomass}, Hom-Lie-admissible, $G$-Hom-associative, flexible Hom-algebras, the result on Lie-admissibility of $G$-Hom-admissible algebras and subclasses of $G$-Hom-admissible algebras are reviewed.
In Section~\ref{sec:nearlyhomass}, main definitions and fundamental identities of
Hom-nearly associative algebras are given. Furthermore, the bimodules,
and matched pair of the Hom-nearly associative algebras are derived and related
properties are obtained.

\section{Nearly associative algebras: basic definitions and properties}\label{section1}

Throughout this paper, for simplicity of exposition, all linear spaces are assumed to be over field
$\mathbb{K}$ of characteristic is $0$, even though many results hold in general for other fields as well unchanged or with minor modifications.
An algebra is a couple $(A, \mu)$ consisting of a linear space $ A$ and a bilinear product $\mu: A\times A \rightarrow A$.
\begin{defi}
An algebra $(A, \cdot)$ is called nearly associative if, for all $x, y, z\in  A$,
\begin{eqnarray}\label{eq_identity}
x\cdot (y\cdot z)=  (z\cdot x)\cdot y.
\end{eqnarray}
\end{defi}
\begin{ex}
	Consider a two-dimensional  linear space $ A$ with basis $\{ e_1, e_2\}$.
	\begin{itemize}
		\item
		Then, $( A, \cdot)$ is a nearly associative  algebra, where $e_1\cdot e_1= e_1+ e_2$ and
		for all $(i,j)\neq (1,1)$ with $i, j\in \{1, 2\}$, $e_i\cdot e_j=0$.
		\item
		The linear product defined on $ A$ by: $e_1\cdot e_1= e_2$,
		$e_1\cdot e_2= e_1=e_2\cdot e_1$ and
		$e_2\cdot e_2=e_2$,  is such that $(A, \cdot)$ is a nearly associative  algebra.
	\end{itemize}
\end{ex}
\begin{ex}
	Consider a three-dimensional linear space $ A$ with basis $\{ e_1, e_2, e_3\}$.
	\begin{itemize}
		\item
		The linear space $ A$ equipped with the linear product defined on $ A$ by:
		$e_1\cdot e_1= e_2+e_3 $,
		$e_2\cdot e_2= e_1+e_2-e_3 $,
		$e_3\cdot e_3=-e_1+e_2$
		and for all $i\neq j, e_i\cdot e_j=0$, where $i,j\in \{1, 2, 3\}$,
		is a nearly associative  algebra.
		\item
		The linear space $ A$ equipped with the linear product defined on $ A$ by:
		$e_1\cdot e_1= e_2-e_3 $,
		$e_2\cdot e_2= e_2+e_3 $,
		$e_3\cdot e_3=e_1-e_2+e_3$
		and for all $i\neq j, e_i\cdot e_j=0$, where $i,j\in \{1, 2, 3\}$,
		is a nearly associative  algebra.
		\item
		The linear space $ A$ equipped with the linear product defined on $ A$ by:
		$e_2\cdot e_2= e_1+e_3 $,
		$e_1\cdot e_1= e_1+e_2+e_3 $,
		$e_3\cdot e_3=e_1+e_2$
		and for all $i\neq j, e_i\cdot e_j=0$, where $i,j\in \{1, 2, 3\}$,
		is a nearly associative  algebra.
	\end{itemize}	
\end{ex}

\begin{defi}[\cite{DassoundoSilvestrov:nearlyhomass:Albert:PowerAssRings,
DassoundoSilvestrov:nearlyhomass:ElduqueMyung:MutatAlternAlg,
DassoundoSilvestrov:nearlyhomass:Myung:Lieadmalg,
DassoundoSilvestrov:nearlyhomass:MyungOkuboSantilli:ApplLieadmalgPhys,
DassoundoSilvestrov:nearlyhomass:Myung:LiealgFlexLieadmalg,
DassoundoSilvestrov:nearlyhomass:Santilli:introLieadmalg,
DassoundoSilvestrov:nearlyhomass:Santilli:Lieadmapprhadronicstr}]
An algebra $(A, \cdot)$ is called Lie admissible if $( A, [.,.])$ is a Lie algebra, where $[x,y]=x\cdot y-y\cdot x$ for all $x,y\in  A$.
\end{defi}
For a Lie admissible algebra $( A, \cdot)$, the Lie algebra $\mathcal{G}(A)=( A, [.,.])$ is called an underlying Lie algebra of $( A, \cdot)$.

It is known that associative algebras, left-symmetric algebras and anti-flexible algebras (center-symmetric algebras) are Lie-admissible \cite{DassoundoSilvestrov:nearlyhomass:Bai:LeftsymbialgYangBaxtereq,DassoundoSilvestrov:nearlyhomass:BaiC:doublconstrfrobalg, DassoundoSilvestrov:nearlyhomass:HounkonnouDassoundo:centersymalgbialg}.

\begin{pro} \label{prop_Lie_adm}
Any nearly associative algebra is Lie-admissible.
\end{pro}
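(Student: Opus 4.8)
The bracket $[x,y]=x\cdot y-y\cdot x$ is manifestly skew-symmetric, so by the definition of Lie-admissibility the only thing left to verify is the Jacobi identity for $[\cdot,\cdot]$; equivalently, that the Jacobiator
\[
J(x,y,z)=[[x,y],z]+[[y,z],x]+[[z,x],y]
\]
vanishes for all $x,y,z\in A$. The plan is to reduce everything to the single structural identity \eqref{eq_identity}, since that is the only hypothesis available.

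First I would expand $J$ using nothing but the definition of the bracket. Each double bracket contributes four triple products, so $J$ becomes an alternating sum of twelve triple products: six of them \emph{left-nested}, of the form $(a\cdot b)\cdot c$, and six \emph{right-nested}, of the form $a\cdot(b\cdot c)$. The crucial observation is that \eqref{eq_identity} can be read as a rewriting rule $a\cdot(b\cdot c)=(c\cdot a)\cdot b$, converting any right-nested product into a left-nested one. Applying this rule to the six right-nested terms turns the whole Jacobiator into a signed sum of twelve left-nested products, each built from one of the six orderings of $x,y,z$.

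After this rewriting I expect each of the six possible left-nested products $(a\cdot b)\cdot c$ to appear exactly twice, once with each sign, so that $J$ collapses term by term to zero. For instance, $(x\cdot y)\cdot z$ arises with a $+$ sign directly from the expansion of $[[x,y],z]$, and with a $-$ sign after rewriting $y\cdot(z\cdot x)$ inside $[[z,x],y]$ via \eqref{eq_identity}. The entire content of the proof is therefore this bookkeeping, namely checking that the twelve terms pair off with opposite signs.

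I do not anticipate a genuine obstacle here: no auxiliary lemma, structural decomposition, or use of the characteristic-zero hypothesis is required, and the argument is purely computational. The only real risk is a sign error or a misapplied index when invoking \eqref{eq_identity}, so I would organize the twelve terms into a single explicit list, record for each right-nested term its left-nested rewrite, and then exhibit the six cancelling pairs, making the vanishing of $J(x,y,z)$ transparent.
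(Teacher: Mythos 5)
Your proposal is correct and is essentially the same computation as the paper's proof: both expand the Jacobiator into twelve triple products and cancel them in six pairs using the identity \eqref{eq_identity}, the only cosmetic difference being that the paper groups each pair directly as $\{a\cdot(b\cdot c)-(c\cdot a)\cdot b\}=0$ while you first rewrite all right-nested terms into left-nested form and then cancel. The bookkeeping you outline (including your sample pair) checks out.
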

\begin{proof}
For $[.,.]:(v,w)\mapsto v\cdot w-w\cdot v$ and $x, y, z$ in a nearly associative algebra $( A, \cdot)$,
\begin{eqnarray*}
&& [x,[y,z]]+[y, [z,x]]+[z, [x,y]]\cr
&&  =[x, y\cdot z-z\cdot y]+[y, z\cdot x-x\cdot z]+[z, x\cdot y-y\cdot x]\cr
&& =x\cdot (y\cdot z)-x\cdot(z\cdot y)-(y\cdot z)\cdot x+(z\cdot y)\cdot x\cr
&& \quad +y\cdot (z\cdot x)-y\cdot (x\cdot z)-(z\cdot x)\cdot y+(x\cdot z)\cdot y\cr
&& \quad +z\cdot(x\cdot y)-z\cdot (y\cdot x)-(x\cdot y)\cdot z+(y\cdot x)\cdot z\cr
&& =\{x\cdot (y\cdot z)-(z\cdot x)\cdot y \}+\{(y\cdot x)\cdot z-x\cdot(z\cdot y)\}\cr
&& \quad +\{y\cdot (z\cdot x)-(x\cdot y)\cdot z\}+\{z\ast(x\cdot y)-(y\cdot z)\cdot x\}\cr
&& \quad +\{(z\cdot y)\cdot x-y\cdot (x\cdot z)\}+\{(x\cdot z)\cdot y-z\cdot (y\cdot x) \}=0.
\end{eqnarray*}
Therefore, $( A, [.,.])$ is a Lie algebra.
\end{proof}

\begin{rmk}
In a nearly associative algebra $( A, \cdot)$, for $x,y\in  A$,
\begin{subequations}
\begin{eqnarray}
		L(x)L(y)&=&R(y)R(x), \\
		L(x)R(y)&=&L({y\cdot x}), \\
		R(x)L(y)&=&R(x\cdot y),
\end{eqnarray}
\end{subequations}
where $L, R:  A\rightarrow \End( A)$ are
the operators of left and right multiplications.
\end{rmk}
\begin{defi}
An 	anti-flexible  algebra is a couple $(A, \cdot)$	where
$ A$ is a linear space, and  $\cdot:  A\times  A\rightarrow A$
is a bilinear product such that for all $x, y, z\in  A$,
	\begin{eqnarray}\label{eq_associative}
	(x\cdot y)\cdot z-(z\cdot y)\cdot x=x\cdot (y\cdot z)-z\cdot (y\cdot x).
	\end{eqnarray}
\end{defi}
Using associator $a(x,y,z)= (x\cdot y)\cdot z-x\cdot (y\cdot z)$, the equality \eqref{eq_associative} is equivalent to
	\begin{eqnarray}\label{eq_associative_ass}
	a(x,y,z)=a(z, y, x).
	\end{eqnarray}
In view of \eqref{eq_associative_ass}, anti-flexible algebras were called center-symmetric algebras in \cite{DassoundoSilvestrov:nearlyhomass:HounkonnouDassoundo:centersymalgbialg}.

\begin{pro}
Any commutative nearly associative  algebra is anti-flexible.
\end{pro}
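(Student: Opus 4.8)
The plan is to prove the stronger fact that a commutative nearly associative algebra is in fact associative; anti-flexibility then comes for free. Indeed, if the associator $a(x,y,z)=(x\cdot y)\cdot z-x\cdot(y\cdot z)$ vanishes identically, then in particular $a(x,y,z)=0=a(z,y,x)$, which is exactly the reformulation \eqref{eq_associative_ass} of \eqref{eq_associative} that characterizes anti-flexibility. So the whole proposition reduces to checking that $(x\cdot y)\cdot z=x\cdot(y\cdot z)$ for all $x,y,z\in A$.

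To establish this I would interleave commutativity with the defining identity \eqref{eq_identity}, via the short chain
\begin{equation*}
(x\cdot y)\cdot z = z\cdot(x\cdot y) = (y\cdot z)\cdot x = x\cdot(y\cdot z),
\end{equation*}
where the first and third equalities merely swap the two factors of a single product (commutativity), and the middle equality applies \eqref{eq_identity} with the substitution $(x,y,z)\mapsto(z,x,y)$, turning the right-nested product $z\cdot(x\cdot y)$ into the left-nested $(y\cdot z)\cdot x$. The only thing to watch is that the cyclic rotation produced by \eqref{eq_identity} is taken in the correct order and matched correctly against the two commutation swaps.

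The main obstacle, and the reason the computation is not completely automatic, is that the naive strategy of expanding all four associator terms with \eqref{eq_identity} and then matching is circular. Using \eqref{eq_identity} together with commutativity, both right-nested terms $x\cdot(y\cdot z)$ and $z\cdot(y\cdot x)$ reduce to $(x\cdot z)\cdot y$, so that $a(x,y,z)-a(z,y,x)=(x\cdot y)\cdot z-(z\cdot y)\cdot x$; but attempting to annihilate this last difference by rotating with \eqref{eq_identity} alone only returns the same statement with $x$ and $z$ interchanged, since $(x\cdot y)\cdot z$ and $(z\cdot y)\cdot x$ differ by an odd permutation rather than a cyclic one. The resolution is precisely the interleaving above: one first derives the cyclic relation $(u\cdot v)\cdot w=(v\cdot w)\cdot u$ (itself commutativity followed by \eqref{eq_identity}) and then combines it with commutativity to realize the transposition $x\leftrightarrow z$. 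Once associativity is in hand, \eqref{eq_associative} holds with both of its sides equal to zero, so the algebra is anti-flexible.
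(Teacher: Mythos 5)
Your proof is correct, and it takes a genuinely different (and in fact stronger) route than the paper's. The chain $(x\cdot y)\cdot z = z\cdot(x\cdot y) = (y\cdot z)\cdot x = x\cdot(y\cdot z)$ checks out: the middle step is \eqref{eq_identity} applied to the triple $(z,x,y)$, and the two outer steps are commutativity of the top-level product. So you establish that every commutative nearly associative algebra is associative, after which $a(x,y,z)=0=a(z,y,x)$ and anti-flexibility is immediate. The paper instead verifies \eqref{eq_associative_ass} directly without ever leaving the anti-flexible identity: it rewrites both nested products in $a(x,y,z)$ via \eqref{eq_identity} to obtain $a(x,y,z)=[y,z\cdot x]$, uses commutativity once to replace $z\cdot x$ by $x\cdot z$ inside the bracket, and then reverses the two rewritings to arrive at $a(z,y,x)$. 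Your approach buys a sharper structural conclusion --- the class of commutative nearly associative algebras collapses to commutative associative algebras, making the proposition a trivial corollary --- while the paper's computation stays in the symmetric-associator format that it later mimics in the Hom-setting. It is worth noting that the paper's intermediate identity $a(x,y,z)=[y,z\cdot x]$, which holds in any nearly associative algebra, already gives your stronger conclusion instantly under commutativity, since all commutators then vanish; the paper simply does not record this.
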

\begin{proof}
For all $x,y,z\in  A$ in a commutative nearly associative  algebra $( A, \cdot )$,
by using nearly associativity, commutativity and again nearly associativity,
\begin{eqnarray*}
a(x,y,z)=(x\cdot y)\cdot z - x\cdot (y\cdot z)= y\cdot (z\cdot x)-(z\cdot x)\cdot y
=[y,z\cdot x]=[y,x\cdot z]= \cr
y\cdot(x\cdot z) - (x\cdot z) \cdot y = (z\cdot y) \cdot x - z \cdot (y\cdot x) = a(z,y,x)
\end{eqnarray*}
proves \eqref{eq_associative_ass} meaning that $( A, \cdot )$ is anti-flexible.
\end{proof}


\section{Classification of the two-dimensional nearly associative  algebras}\label{section2}
\begin{thm}\label{thm_fond}
Any two-dimensional algebra  $(A, \cdot)$ is nearly associative
if and only if
\begin{eqnarray*}
	e_1\cdot(e_1\cdot e_1)=(e_1\cdot e_1)\cdot e_1,\qquad
	e_1\cdot(e_1\cdot e_2)=(e_2\cdot e_1)\cdot e_1,\\
	e_1\cdot(e_2\cdot e_1)=(e_1\cdot e_1)\cdot e_2,\qquad
	e_2\cdot(e_1\cdot e_1)=(e_1\cdot e_2)\cdot e_1,\\
	e_1\cdot(e_2\cdot e_2)=(e_2\cdot e_1)\cdot e_2,\qquad
	e_2\cdot(e_1\cdot e_2)=(e_2\cdot e_2)\cdot e_1,\\
	e_2\cdot(e_2\cdot e_1)=(e_1\cdot e_2)\cdot e_2,\qquad
	e_2\cdot(e_2\cdot e_2)=(e_2\cdot e_2)\cdot e_2,
\end{eqnarray*}
where $\{e_1, e_2\}$ is a basis of $A.$
\end{thm}
\begin{thm}\label{Thm_Classification_two_dim}
Any two-dimensional nearly associative  algebra is isomorphic to one of the following nearly associative  algebras:
\begin{itemize}
\item 	For all $(\alpha, \beta)\in \mathbb{K}^2\backslash\{ (0,0)\}$, $e_1\cdot e_1=\alpha e_2  , e_1\cdot e_2= \beta e_1= e_2\cdot e_1, e_2\cdot e_2=\beta e_2$.
\item 	For all $(\alpha, \beta)\in \mathbb{K}^2\backslash\{ (0,0)\}$, $e_1\cdot e_1=\alpha e_1+\beta e_2, e_1\cdot e_2= \beta e_1+\alpha e_2= e_2\cdot e_1, \\e_2\cdot e_2=\alpha e_1+\beta e_2$.
\item 	For all $(\alpha, \beta, \gamma)\in \mathbb{K}^3$, such that $\gamma^2+4\alpha\beta\geq0,$ $e_1\cdot e_1=\alpha e_1,
e_2\cdot e_2=\beta e_1+\gamma e_2,\\
e_1\cdot e_2=\frac{1}{2}\left(\gamma+
\sqrt{\gamma^2+4\alpha\beta}\right) e_1= e_2\cdot e_1.
		$
\end{itemize}
\end{thm}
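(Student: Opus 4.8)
The plan is to fix the basis $\{e_1,e_2\}$, encode the product through structure constants, turn Theorem~\ref{thm_fond} into a polynomial system, reduce everything to the commutative (hence associative) case, and only then normalise by a change of basis. First I would write $e_1\cdot e_1=a_1e_1+a_2e_2$, $e_1\cdot e_2=b_1e_1+b_2e_2$, $e_2\cdot e_1=c_1e_1+c_2e_2$, $e_2\cdot e_2=d_1e_1+d_2e_2$. By Theorem~\ref{thm_fond} the algebra is nearly associative if and only if the eight displayed identities hold, and expanding each in the basis turns them into polynomial equations in these eight constants. Two of them are especially cheap: $e_1\cdot(e_1\cdot e_1)=(e_1\cdot e_1)\cdot e_1$ gives $a_2(b_1-c_1)=a_2(b_2-c_2)=0$, and $e_2\cdot(e_2\cdot e_2)=(e_2\cdot e_2)\cdot e_2$ gives $d_1(b_1-c_1)=d_1(b_2-c_2)=0$.

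The key first structural step is to prove that every two-dimensional nearly associative algebra is \emph{commutative}, i.e. $b_1=c_1$ and $b_2=c_2$. Writing $p=b_1-c_1$, $q=b_2-c_2$, the relations above already give $p=q=0$ whenever $a_2\neq0$ or $d_1\neq0$. In the remaining case $a_2=d_1=0$ (so $e_1\cdot e_1=a_1e_1$ and $e_2\cdot e_2=d_2e_2$) I would feed the other six identities into the system; they yield, among others, $p(b_1+c_1)=0$, $q(b_2+c_2)=0$, $b_1(d_2-c_1)=c_1(b_1-d_2)=0$ and $b_2(c_2-a_1)=c_2(a_1-b_2)=0$. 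A short contradiction argument finishes it: if $p\neq0$ then the first relation forces $c_1=-b_1$ with $b_1\neq0$, whence $d_2=c_1=-b_1$ and then $c_1(b_1-d_2)=-2b_1^2\neq0$, a contradiction; the symmetric computation rules out $q\neq0$. Hence $p=q=0$ in all cases.

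Once commutativity is in hand, $R(y)=L(y)$, so the operator identity $L(x)R(y)=L(y\cdot x)$ recorded in the Remark following Proposition~\ref{prop_Lie_adm} becomes $L(x)L(y)=L(x\cdot y)$, which is exactly $(x\cdot y)\cdot z=x\cdot(y\cdot z)$; thus a commutative nearly associative algebra is associative, and conversely commutativity together with associativity returns the defining identity $x\cdot(y\cdot z)=(z\cdot x)\cdot y$. Consequently the classification collapses to that of two-dimensional commutative associative algebras, and the left multiplications commute, $L(x)L(y)=L(y)L(x)$, which is the lever for simultaneous normalisation.

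Finally I would classify these up to isomorphism through a change of basis $P\in GL_2(\mathbb{K})$, organising the casework by the structure of the multiplication operators (presence of a nonzero idempotent, of a nilpotent element, or the degenerate/zero products) and choosing $P$ in each case to reach one of the three listed forms. The third family is where the square root enters: requiring $e_1\cdot e_2=\lambda e_1$ compatible with $e_1\cdot e_1=\alpha e_1$ and $e_2\cdot e_2=\beta e_1+\gamma e_2$ forces $\lambda^2=\gamma\lambda+\alpha\beta$ (this is exactly the content of $e_2\cdot(e_1\cdot e_2)=(e_2\cdot e_2)\cdot e_1$), so $\lambda=\tfrac12(\gamma+\sqrt{\gamma^2+4\alpha\beta})$, which also explains the discriminant condition $\gamma^2+4\alpha\beta\geq0$ guaranteeing a root in $\mathbb{K}$. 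I expect this last step to be the main obstacle: carrying out the $GL_2(\mathbb{K})$ normalisation so that the three cases are simultaneously exhaustive and irredundant, and correctly tracking the field-dependent existence of the square root, is where the real work lies, whereas the commutativity reduction and the passage to associativity are comparatively mechanical.
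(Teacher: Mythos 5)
Your route is genuinely different from the paper's, and the part you actually carry out is correct. The paper's proof expands Theorem~\ref{thm_fond} into the full system of sixteen polynomial equations in the eight structure constants, lists all solution branches (the families $r_1,\dots,r_{29}$), and then simply asserts that the three normal forms exhaust them up to isomorphism; commutativity is never isolated, it just happens to hold in every listed branch. You instead extract a structural lemma -- every two-dimensional nearly associative algebra is commutative -- using only a handful of the sixteen equations, and your computations check out: identities $e_1(e_1e_1)=(e_1e_1)e_1$ and $e_2(e_2e_2)=(e_2e_2)e_2$ do give $a_2(b_i-c_i)=d_1(b_i-c_i)=0$, and in the residual case $a_2=d_1=0$ the relations $p(b_1+c_1)=0$, $b_1(d_2-c_1)=0$, $c_1(b_1-d_2)=0$ (and their mirror images) force $p=q=0$ by exactly the contradiction you describe, using $\mathrm{char}\,\mathbb{K}=0$. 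The subsequent observation that commutativity turns $x\cdot(y\cdot z)=(z\cdot x)\cdot y$ into ordinary associativity (and conversely) is also correct: $(z\cdot x)\cdot y=y\cdot(z\cdot x)=(x\cdot y)\cdot z$. This reduction to two-dimensional commutative associative algebras is cleaner and more conceptual than the paper's brute force, and it explains \emph{why} all three normal forms in the statement are commutative.

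The gap is the last step, and you flag it yourself: the actual $GL_2(\mathbb{K})$ normalisation producing exactly the three listed families, with a proof of exhaustiveness, is only outlined. That is where the content of the theorem as stated lives -- for instance, one must check that degenerate products such as $e_1\cdot e_1=e_1+e_2$ with all other products zero really land in the third family after a change of basis, and one must handle the field-dependence of $\sqrt{\gamma^2+4\alpha\beta}$ (the condition $\gamma^2+4\alpha\beta\geq 0$ in the statement already tacitly assumes an ordered field and does not by itself guarantee a square root in $\mathbb{K}$). To be fair, the paper's own proof is no more explicit at this point: it lists the solution branches and asserts the three normal forms without exhibiting a single isomorphism. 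So your proposal replaces the paper's computational first half by a better argument, but leaves the same final normalisation unexecuted; to count as a complete proof it would need the case analysis on idempotents and nilpotents actually written out and matched against the three families.
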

\begin{proof}

Equip the linear space $A$ with the basis $\{e_1, e_2\}$, and for all $i;j\in \{1;2\}$, set
$e_i\cdot e_j=a_{ij}e_1+b_{ij}e_2$, where $a_{ij}\in \mathbb{K}$ and $b_{ij}\in \mathbb{K}$.
In addition, for all $i, j, k\in \{1,2\}$,
$
a_{jk}a_{i1}+b_{jk}a_{i2}=a_{ki}a_{1j}+b_{ki}a_{2j},
a_{jk}b_{i1}+b_{jk}b_{i2}=a_{ki}b_{1j}+b_{ki}b_{2j}.
$
By Theorem~\ref{thm_fond},
\begin{eqnarray*}
&\left\lbrace
\begin{array}{lllllllllllllllll}
	e_1\cdot(e_1\cdot e_1)=(e_1\cdot e_1)\cdot e_1 \\
	e_1\cdot(e_1\cdot e_2)=(e_2\cdot e_1)\cdot e_1 \\
	e_1\cdot(e_2\cdot e_1)=(e_1\cdot e_1)\cdot e_2 \\
	e_2\cdot(e_1\cdot e_1)=(e_1\cdot e_2)\cdot e_1 \\
	e_1\cdot(e_2\cdot e_2)=(e_2\cdot e_1)\cdot e_2 \\
	e_2\cdot(e_1\cdot e_2)=(e_2\cdot e_2)\cdot e_1 \\
	e_2\cdot(e_2\cdot e_1)=(e_1\cdot e_2)\cdot e_2 \\
	e_2\cdot(e_2\cdot e_2)=(e_2\cdot e_2)\cdot e_2
\end{array}
\right. \\*[0,3cm]
& \Longleftrightarrow
\left\lbrace
\begin{array}{lllllllllllllllll}
	a_{11}a_{11}+b_{11}a_{12}=a_{11}a_{11}+b_{11}a_{21},\\
	a_{11}b_{11}+b_{11}b_{12}=a_{11}b_{11}+b_{11}b_{21},\\
	a_{12}a_{11}+b_{12}a_{12}=a_{21}a_{11}+b_{21}a_{21},\\
	a_{12}b_{11}+b_{12}b_{12}=a_{21}b_{11}+b_{21}b_{21},\\
	a_{21}a_{11}+b_{21}a_{12}=a_{11}a_{12}+b_{11}a_{22},\\
	a_{21}b_{11}+b_{21}b_{12}=a_{11}b_{12}+b_{11}b_{22},\\
	a_{11}a_{21}+b_{11}a_{22}=a_{12}a_{11}+b_{12}a_{21},\\
	a_{11}b_{21}+b_{11}b_{22}=a_{12}b_{11}+b_{12}b_{21},\\
	a_{22}a_{11}+b_{22}a_{12}=a_{21}a_{12}+b_{21}a_{22},\\
	a_{22}b_{11}+b_{22}b_{12}=a_{21}b_{12}+b_{21}b_{22},\\
	a_{12}a_{21}+b_{12}a_{22}=a_{22}a_{11}+b_{22}a_{21},\\
	a_{12}b_{21}+b_{12}b_{22}=a_{22}b_{11}+b_{22}b_{21},\\
	a_{21}a_{21}+b_{21}a_{22}=a_{12}a_{12}+b_{12}a_{22},\\
	a_{21}b_{21}+b_{21}b_{22}=a_{12}b_{12}+b_{12}b_{22},\\
	a_{22}a_{21}+b_{22}a_{22}=a_{22}a_{12}+b_{22}a_{22},\\
	a_{22}b_{21}+b_{22}b_{22}=a_{22}b_{12}+b_{22}b_{22}
\end{array}
\right. \\*[0,3cm]
&\Longleftrightarrow
\left\lbrace
\begin{array}{lllllllllllllllll}
	e(b-c)=0, e(f-g)=0,\\
	h(b-c)=0,
	d(b-c)=0,\\
	d(f-g)=0, a(f-g)=0\\
	(b-c)(b+c)=0,\\
	(f-g)(f+g)=0\\
	e(b-c)+f(g-a)=0\\
	d(a-g)+b(h-c)=0\\
	a(b-c)=0, h(f-g)=0\\
	(bf-cg)=0, bg=de=fc
\end{array}
\right. \\
& \Longleftrightarrow
\left\lbrace
\begin{array}{cccccccccccccc}
	\left\lbrace
	\begin{array}{llllllllllll}
		a= r_1,b= r_2,c= r_2,d= r_1,\\
		e= r_2,f= r_1,g= r_1,h= r_2\\
	\end{array}
	\right.
	\mbox{or }
	\left\lbrace
	\begin{array}{llllllllllll}
		a= r_1,b= r_2,c= r_2,d= r_2,\\
		e= r_1,f= r_1,g= r_1,h= r_2\\
	\end{array}
	\right.
	\\
	\mbox{or }
	\\
	\left\lbrace
	\begin{array}{lllllllllll}
		a=  r_5,b=0,c=0,d=   0,\\
		e=  r_6,f=  r_5,g=   r_5,h=  r_8\\
	\end{array}
	\right.
	\mbox{or }
	\left\lbrace
	\begin{array}{lllllllllll}
		a=  r_5,b=0,c=0,d=   r_6,\\
		e=  0,f=  r_5,g=   r_5,h=  r_8\\
	\end{array}
	\right.
	\\
	\mbox{or }
	\\
	\left\lbrace
	\begin{array}{llllllllll}
		a=   r_9,b=\frac{|r_{12}|+  r_{12}}{2}, \\
		c=\frac{|r_{12}|+  r_{12}}{2},
		d=   0,\\ e= r_{11},f=0,g=0,h=   r_{12}\\
	\end{array}
	\right.
	\mbox{or }
	\left\lbrace
	\begin{array}{llllllllll}
		a=   r_9,b=\frac{\sqrt{4   r_{10}\,    r_9+{{   r_{12}}^{2}}}+  r_{12}}{2},\\
		c=\frac{\sqrt{4  r_{10}\,   r_9 +{{  r_{12}}^{2}}}+  r_{12}}{2},
		d=   r_{10},\\e=0,f=0,g=0,h=   r_{12}
	\end{array}
	\right.
	\\
	\mbox{or }
	\\
	\left\lbrace
	\begin{array}{lllllllllllllll}
		a= r_ { 13},b=\frac{  r_{ 16}-\sqrt{{{  r_{ 16}}^{2}}}}{2},\\
		c=\frac{   r_{16}-\sqrt{{{   r_{16}}^{2}}}}{2},
		d=  0,\\e= r_ { 14},f=0\\,g=0,h=  r_{ 16}\\
	\end{array}
	\right.
	\mbox{or }
	\left\lbrace
	\begin{array}{lllllllllllllll}
		a= r_ { 13},b=\frac{  r_{ 16}-\sqrt{{{  r_{ 16}}^{2}}+4   r_{13}\,   r_{14}}}{2},\\
		c=\frac{   r_{16}-\sqrt{{{   r_{16}}^{2}}+4    r_{13}\,   r_{ 14}}}{2},
		d=  r_{14},\\e= 0,f=0,g=0,h=  r_{ 16}\\
	\end{array}
	\right.
	\\
	\mbox{or }
	\\
	\left\lbrace
	\begin{array}{llllllllllllllll}
		a=  r_{ 17},b=0,
		c=0,
		d= 0,\\e=  r_{18},f=0,g=0,h=0\\
	\end{array}
	\right.
	\mbox{or }
	\left\lbrace
	\begin{array}{llllllllllllllll}
		a=  r_{ 17},b=\sqrt{  r_{ 17}\,   r_{18}},
		c=\sqrt{  r_{ 17}\,   r_{ 18}},\\
		d=   r_{18},e= 0,f=0,g=0,h=0\\
	\end{array}
	\right.
	\\
	\mbox{or }
	\\
	\left\lbrace
	\begin{array}{lllllllllllllll}
		a= r_{20},b=0,
		c=0,d=  0,\\
		e=   r_{21},f=0,g=0,h=0\\
	\end{array}
	\right.
	\mbox{or }
	\left\lbrace
	\begin{array}{lllllllllllllll}
		a= r_{20},b=-\sqrt{  r_{20}\,    r_{21}}, \\
		c=-\sqrt{  r_{20}\,  r_{ 21}},d=  r_{ 21},\\
		e= 0,f=0,g=0,h=0\\
	\end{array}
	\right.
	\\
	\mbox{or }
	\\
	\left\lbrace
	\begin{array}{lllllllllllll}
		a=0,b=0,c=0,d= 0,\\e=  r_{24},f=0,g=0,h=   r_{25}\\
	\end{array}
	\right.
	\mbox{or }
	\left\lbrace
	\begin{array}{lllllllllllll}
		a=0,b=0,c=0,d=  r_{23},\\e= 0,f=0,g=0,h=   r_{25}\\
	\end{array}
	\right.
	\\
	\mbox{or }
	\\
	\left\lbrace
	\begin{array}{lllllllllllllll}
		a=0,b=  r_{26},c=  r_{ 26},d=  0,\\
		e=  r_{ 28},f=0,g=0,h=   r_{26}\\
	\end{array}
	\right.
	\mbox{or }
	\left\lbrace
	\begin{array}{lllllllllllllll}
		a=0,b=  r_{26},c=  r_{ 26},d=  r_{ 27},\\
		e= 0,f=0,g=0,h=   r_{26}\\
	\end{array}
	\right.
	\\
	\mbox{or }
	\\
	\left\lbrace
	\begin{array}{llllllllllllllll}
		a=  r_{ 29},b=0,c=0,d=0, \\
e=0,f=0,g=0,h=0
	\end{array}
	\right.
\end{array}
\right.
\end{eqnarray*}
with
$
a_{11}=a, a_{12}=b, a_{21}=c, a_{22}=d,
b_{11}=e, b_{12}=f, b_{21}=g, b_{22}=h.
$

Therefore, the non-isomorphic algebras generated by these constants structures are:
\begin{itemize}
	\item
	For all $(\alpha, \beta)\in \mathbb{K}^2\backslash\{ (0,0)\}$,$e_1\cdot e_1=\alpha e_2  , e_1\cdot e_2= \beta e_1= e_2\cdot e_1, e_2\cdot e_2=\beta e_2$.
	\item
	For all $(\alpha, \beta)\in \mathbb{K}^2\backslash\{ (0,0)\}$,$e_1\cdot e_1=\alpha e_1+\beta e_2, e_1\cdot e_2= \beta e_1+\alpha e_2= e_2\cdot e_1, \\e_2\cdot e_2=\alpha e_1+\beta e_2$.
	\item
	For all $(\alpha, \beta, \gamma)\in \mathbb{K}^3$, such that $\gamma^2+4\alpha\beta\geq0,$
	$e_1\cdot e_1=\alpha e_1,
	e_2\cdot e_2=\beta e_1+\gamma e_2,\\
	e_1\cdot e_2=\frac{1}{2}\left(\gamma+\sqrt{\gamma^2+4\alpha\beta}\right) e_1= e_2\cdot e_1.
	$
\end{itemize}
\end{proof}

\section{Bimodules and matched pairs nearly associative  algebras}\label{section3}
\begin{defi}
Let $( A, \cdot )$ be a nearly associative  algebra. Consider the linear maps
$l;r: A\rightarrow \End(V)$, where $V$ is a linear space.
A triple $( l, r, V)$ is a bimodule of
$( A, \cdot)$ if for all $x, y\in  A$, the following relations
\begin{subequations}
\begin{eqnarray}\label{eq_bimodule_1_1}
l(x)l(y)&=&r(y)r(x),
\\
\label{eq_bimodule_1_2}
l(x)r(y)&=&l({y\cdot x}),
\\
\label{eq_bimodule_1_3}
r(x)l(y)&=&r({x\cdot y})
\end{eqnarray}
\end{subequations}
are satisfied.
\end{defi}
\begin{ex}
Let $( A, \cdot )$ be a nearly associative  algebra. The triple
$(L, R,  A)$ is a bimodule of $( A, \cdot)$, where for any $x,y\in A$,
$L(x)y=x\cdot y=R(y)x$.
\end{ex}
\begin{pro}
	Let $(l, r, V)$ be a bimodule of a nearly associative  algebra $( A, \cdot)$, where
	$l;r: A\rightarrow \End(V)$ are two linear maps and  $V$  a linear space.
	There is a nearly associative  algebra defined on $ A\oplus V$ by,
	for any $x,y\in  A$ and any $u,v\in V,$
	\begin{eqnarray}\label{eq_struc_bimodule_1}
	(x+u)\ast (y+v)=x\cdot y+ l(x)v+r(y)u.
	\end{eqnarray}
\end{pro}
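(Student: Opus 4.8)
The plan is to verify directly that the product $\ast$ defined by \eqref{eq_struc_bimodule_1} satisfies the nearly associativity identity \eqref{eq_identity} on $A\oplus V$. I would fix arbitrary elements $X=x+u$, $Y=y+v$, $Z=z+w$ with $x,y,z\in A$ and $u,v,w\in V$, and compute $X\ast(Y\ast Z)$ and $(Z\ast X)\ast Y$ separately, applying \eqref{eq_struc_bimodule_1} twice and keeping careful track of which component of each intermediate product lies in $A$ and which lies in $V$.

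First I would expand the left-hand side. Since $Y\ast Z=y\cdot z+l(y)w+r(z)v$ has $A$-component $y\cdot z$ and $V$-component $l(y)w+r(z)v$, a second application of \eqref{eq_struc_bimodule_1} yields
\[
X\ast(Y\ast Z)=x\cdot(y\cdot z)+l(x)l(y)w+l(x)r(z)v+r(y\cdot z)u.
\]
Symmetrically, from $Z\ast X=z\cdot x+l(z)u+r(x)w$ a second application gives
\[
(Z\ast X)\ast Y=(z\cdot x)\cdot y+l(z\cdot x)v+r(y)l(z)u+r(y)r(x)w.
\]

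Then I would compare the two expressions, sorting the terms according to whether they are intrinsic to $A$ or are tagged by $u$, $v$, or $w$. The purely $A$-valued parts $x\cdot(y\cdot z)$ and $(z\cdot x)\cdot y$ coincide by the nearly associativity of $(A,\cdot)$, that is \eqref{eq_identity}. The $u$-terms $r(y\cdot z)u$ and $r(y)l(z)u$ agree by the bimodule axiom \eqref{eq_bimodule_1_3}; the $v$-terms $l(x)r(z)v$ and $l(z\cdot x)v$ agree by \eqref{eq_bimodule_1_2}; and the $w$-terms $l(x)l(y)w$ and $r(y)r(x)w$ agree by \eqref{eq_bimodule_1_1}. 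As all four matchings hold, the two sides are equal, so $\ast$ is nearly associative; bilinearity of $\ast$ is immediate from its definition and the linearity of $l,r$ and $\cdot$.

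There is essentially no hard step here: the argument is a bookkeeping exercise whose one pleasant feature is that the three mixed terms are matched, one each, by precisely the three bimodule axioms, while nearly associativity of $A$ disposes of the remaining pure term. The only place demanding care is correctly identifying the $A$- and $V$-components of the inner products $Y\ast Z$ and $Z\ast X$ before the second multiplication, since a slip there would route a term through the wrong defining rule of $\ast$ and break the term-by-term correspondence.
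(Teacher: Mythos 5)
Your proposal is correct and follows exactly the paper's own argument: both expand $(x+u)\ast((y+v)\ast(z+w))$ and $((z+w)\ast(x+u))\ast(y+v)$ and then match the four resulting terms using nearly associativity of $(A,\cdot)$ together with the three bimodule axioms. Your version is in fact slightly more explicit than the paper's, which leaves the term-by-term matching to the reader.
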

\begin{proof}

Consider the bimodule $(l,r, V)$ of the nearly associative  algebra $( A, \cdot )$. For all $x,y,z\in  A$ and $u, v,w \in V$ we have:
\begin{subequations}
\begin{eqnarray}\label{eq_proof_rep1}
(x+u)\ast((y+v)\ast(z+w))
= x\cdot (y\cdot z)+l(x)l(y)w+l(x)r(z)v+r({y\cdot z})u
\\
\label{eq_proof_rep2}
((z+w)\ast (x+u))\ast (y+v)
=(z\cdot x)\cdot y+l({z\cdot x})v+r(y)l(z)u+r(y)r(x)w
\end{eqnarray}
\end{subequations}
Using  \eqref{eq_bimodule_1_1}~-~\eqref{eq_bimodule_1_3}
in  \eqref{eq_proof_rep1} and  \eqref{eq_proof_rep2}
we easily deduce that $(A\oplus V, \ast)$
is a nearly associative  algebra.
\end{proof}
\begin{cor}	
Let $(l, r, V)$ be a bimodule of a nearly associative  algebra
$(A, \cdot)$, where $l,r:\A\rightarrow \End(V)$
are two linear maps and $V$  a linear space.	
Then there is a Lie algebra product on $A\oplus V$ given
by
\begin{eqnarray}\label{eq_struc_lie1}
[x+u, y+v]=[x,y]_{_\cdot }+(l(x)-r(x))v-(l(y)-r(y))u
\end{eqnarray}
for all $x, y\in A$ and  for any $u,v\in V$.
\end{cor}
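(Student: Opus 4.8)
The plan is to build the Lie algebra on $A \oplus V$ by applying the commutator construction to the nearly associative algebra $(A \oplus V, \ast)$ produced by the preceding proposition, and then simply read off what that commutator is in terms of $[\cdot,\cdot]_\cdot$, $l$, and $r$. Since we have already proved in Proposition~\ref{prop_Lie_adm} that every nearly associative algebra is Lie-admissible, the antisymmetrized product on any nearly associative algebra automatically satisfies skew-symmetry and the Jacobi identity. Thus if I can show that the formula \eqref{eq_struc_lie1} is exactly the commutator bracket of $\ast$, the corollary follows for free from the two earlier results, with essentially no independent verification required.

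\emph{First} I would write out the commutator of the product $\ast$ defined in \eqref{eq_struc_bimodule_1}. For $x,y \in A$ and $u,v \in V$,
\begin{eqnarray*}
(x+u)\ast(y+v) - (y+v)\ast(x+u)
&=& \bigl(x\cdot y + l(x)v + r(y)u\bigr) - \bigl(y\cdot x + l(y)u + r(x)v\bigr)\\
&=& (x\cdot y - y\cdot x) + \bigl(l(x)-r(x)\bigr)v - \bigl(l(y)-r(y)\bigr)u.
\end{eqnarray*}
Recognizing $x\cdot y - y\cdot x = [x,y]_\cdot$, this is precisely the right-hand side of \eqref{eq_struc_lie1}. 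So the bracket in the statement is literally the commutator associated to $\ast$.

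\emph{Next}, invoking Proposition~\ref{prop_Lie_adm} applied to the nearly associative algebra $(A\oplus V,\ast)$ guaranteed by the preceding proposition, its commutator bracket makes $A \oplus V$ a Lie algebra. Since that commutator coincides with the bracket displayed in \eqref{eq_struc_lie1}, the corollary is established. It is worth remarking that the two summands in \eqref{eq_struc_lie1} live in complementary subspaces — $[x,y]_\cdot \in A$ and the remaining term in $V$ — which is consistent with $\ast$ restricting to $\cdot$ on $A$ and with $V$ being an abelian ideal under $\ast$.

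\emph{The main obstacle} is essentially presentational rather than mathematical: one must confirm that the antisymmetrization genuinely collapses to the stated two-term expression, in particular that the $V$-valued contributions pair up correctly as $\bigl(l(x)-r(x)\bigr)v$ acting on the second argument and $-\bigl(l(y)-r(y)\bigr)u$ on the first. This is the only place where a sign or an index could slip, so I would double-check the bookkeeping of which operator acts on which vector. Beyond that, no use of the bimodule axioms \eqref{eq_bimodule_1_1}--\eqref{eq_bimodule_1_3} is needed in the corollary itself, since those were already consumed in proving that $\ast$ is nearly associative; the Lie structure is then inherited automatically via Lie-admissibility.
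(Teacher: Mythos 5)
Your proposal is correct and follows essentially the same route as the paper: identify the bracket \eqref{eq_struc_lie1} as the commutator of the nearly associative product $\ast$ on $A\oplus V$ from the preceding proposition, and then invoke Proposition~\ref{prop_Lie_adm} (Lie-admissibility) to obtain skew-symmetry and the Jacobi identity. Your explicit verification of the commutator computation is a detail the paper leaves as a remark, but the argument is identical in substance.
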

\begin{proof}
It is simple to remark that the commutator of the
product defined in  \eqref{eq_struc_bimodule_1}
is the product defined in  \eqref{eq_struc_lie1}.
By taking into account Proposition~\ref{prop_Lie_adm},
the Jacobi identity of the product given in
 \eqref{eq_struc_lie1} is satisfied.
\end{proof}
\begin{defi}
Let $(\mathcal{G}, [.,.]_{_\mathcal{G}} )$ be a Lie algebra.
A representation of $(\mathcal{G}, [.,.]_{_\mathcal{G}} )$ over the linear
space $V$ is a linear map
$\rho: \mathcal{G} \rightarrow \End(V)$ satisfying
\begin{eqnarray}\label{eq_representation_Lie}
\rho([x, y]_{_\mathcal{G}}) = \rho(x) \circ \rho(y) - \rho(y)\circ \rho(x)
\end{eqnarray}
for all $x, y \in \mathcal{G}$.
\end{defi}
\begin{pro}\label{proposition_representation_Lie}
Let $( A, \cdot)$ be a nearly associative
algebra and let $V$ be a finite-dimen\-sional linear
space over the field $\mathbb{K}$ such that $(l, r, V )$
is a bimodule of $( A, \cdot)$, where
$l, r :  A \rightarrow \End(V )$ are two
linear maps. Then, the linear map
$$l - r : A \rightarrow \End(V ), \quad  x \mapsto l(x) - r(x)$$
is a representation of the underlying Lie algebra
$\mathcal{G}( A)$ underlying $( A, \cdot)$.
\end{pro}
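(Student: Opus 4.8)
The plan is to verify the representation identity \eqref{eq_representation_Lie} directly for $\rho = l - r$ over the underlying Lie algebra $\mathcal{G}(A) = (A, [.,.])$, where $[x,y] = x\cdot y - y\cdot x$. Since $\mathcal{G}(A)$ is already a Lie algebra by Proposition~\ref{prop_Lie_adm}, it suffices to show
$$(l-r)(x\cdot y - y\cdot x) = (l-r)(x)(l-r)(y) - (l-r)(y)(l-r)(x)$$
for all $x, y \in A$. I would obtain this by expanding both sides and reducing the right-hand side with the three defining bimodule relations.

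First I would expand the right-hand side by bilinearity into the eight operator products $l(x)l(y)$, $l(x)r(y)$, $r(x)l(y)$, $r(x)r(y)$ together with their $x \leftrightarrow y$ transposes. The key observation is that relation \eqref{eq_bimodule_1_1}, applied in both orders, gives $l(x)l(y) = r(y)r(x)$ and $l(y)l(x) = r(x)r(y)$, so the four \emph{pure} terms $l(x)l(y)$, $r(y)r(x)$, $l(y)l(x)$, $r(x)r(y)$ cancel in pairs and contribute nothing. What remains are the four \emph{mixed} terms, which I would rewrite using \eqref{eq_bimodule_1_2} and \eqref{eq_bimodule_1_3}: relation \eqref{eq_bimodule_1_2} turns $l(x)r(y)$ into $l(y\cdot x)$ and $l(y)r(x)$ into $l(x\cdot y)$, while \eqref{eq_bimodule_1_3} turns $r(x)l(y)$ into $r(x\cdot y)$ and $r(y)l(x)$ into $r(y\cdot x)$. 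Collecting signs, the right-hand side collapses to $l(x\cdot y) - l(y\cdot x) - r(x\cdot y) + r(y\cdot x)$, which is precisely the expansion of the left-hand side $(l-r)(x\cdot y) - (l-r)(y\cdot x)$, establishing the identity.

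Since the whole argument is a finite substitution governed by the three defining bimodule relations, there is no genuine obstacle here; the only care required is the bookkeeping of which relation applies to each mixed term and the careful tracking of signs, so that the pure quadratic terms are correctly seen to cancel while the mixed terms reassemble into the values of $l$ and $r$ on the products $x\cdot y$ and $y\cdot x$. The finite-dimensionality hypothesis on $V$ is not used in this computation and serves only to place the statement in the representation-theoretic setting.
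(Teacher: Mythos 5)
Your proof is correct and follows essentially the same route as the paper: expand the commutator $(l-r)(x)(l-r)(y)-(l-r)(y)(l-r)(x)$, cancel the four pure terms via \eqref{eq_bimodule_1_1}, and rewrite the four mixed terms via \eqref{eq_bimodule_1_2} and \eqref{eq_bimodule_1_3} to reassemble $(l-r)([x,y])$. Your sign bookkeeping is accurate (the paper's own displayed computation contains a typo, $l(x\cdot x)$ for $l(x\cdot y)$, which you implicitly correct), and your remark that finite-dimensionality of $V$ is not needed is also accurate.
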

\begin{proof}

Let $(l,r, V)$ be a bimodule of the nearly associative
algebra $( A, \cdot)$.
For $x,y\in  A$,
\begin{multline*}
(l(x)-r(x))(l(y)-r(y))-(l(y)-r(y))(l(x)-r(x))= \cr
l(x)l(y)-l(x)r(y)-r(x)l(y)+r(x)r(y) -l(y)l(x)+l(y)r(x)+r(y)l(x)-r(y)r(x)\cr
= -l(x)r(y)-r(x)l(y)+l(y)r(x)+r(y)l(x)
=-l({y\cdot x})-r({x\cdot y})+l({x\cdot x})+r({y\cdot x})\cr
= (l-r)(x\cdot y-y\cdot x)=(l-r)([x,y]).
\end{multline*}
Therefore,  \eqref{eq_representation_Lie}
is satisfied for $l - r=\rho$.
\end{proof}
\begin{defi}
Let $( A, \cdot)$ be a nearly associative
algebra and   $(l, r, V)$ its associated a
bimodule, where $V$ is a finite-dimensional linear
space. The dual maps $l^{*}, r^{*}$
of linear maps $l, r,$ respectively  are defined as
$\displaystyle l^{*}, r^{*}:  A \rightarrow \End(V^{*})$  such that
for any $x \in  A, u^{*} \in V^{*}, v \in V,$
\begin{subequations}	
\begin{eqnarray}\label{dual1}
\left< l^{*}(x)u^{*}, v \right> = \left< u^{*}, l(x) v\right>,
\\
\label{dual2}
\left<  r^{*}(x)u^{*}, v \right> = \left<u^{*},   r(x) v \right>.
\end{eqnarray}
\end{subequations}
\end{defi}
\begin{pro}\label{Prop_Dual_bimodule}
Let $( A, \cdot)$ be a nearly associative
algebra and $(l,r, V)$ be its bimodule.
The following relations are equivalent:
\begin{enumerate}[label=\upshape{(\roman*)},left=7pt]
\item $(r^*,l^*, V^*)$ is a bimodule of $( A, \cdot)$,
\item    $l(x)r(y)=r(y) l(x)$, for all $x,y \in  A$,
\item $(l^*,r^*, V^*)$ is a bimodule of $( A, \cdot)$.
\end{enumerate}
\end{pro}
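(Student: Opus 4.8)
The plan is to prove the three conditions equivalent by the single mechanism of transposition, establishing (ii)$\,\Leftrightarrow\,$(iii) and (ii)$\,\Leftrightarrow\,$(i) separately. First I would transpose the three defining relations \eqref{eq_bimodule_1_1}--\eqref{eq_bimodule_1_3} of the bimodule $(l,r,V)$. Using the defining property \eqref{dual1}--\eqref{dual2} of the dual maps, together with the fact that transposition reverses composition, $(ST)^{*}=T^{*}S^{*}$, and that $(l(x))^{*}=l^{*}(x)$, $(r(x))^{*}=r^{*}(x)$, each relation for $(l,r,V)$ produces one relation among $l^{*}$ and $r^{*}$ on $V^{*}$. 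Concretely, \eqref{eq_bimodule_1_1} gives $l^{*}(y)l^{*}(x)=r^{*}(x)r^{*}(y)$, while \eqref{eq_bimodule_1_2} and \eqref{eq_bimodule_1_3} give $r^{*}(y)l^{*}(x)=l^{*}(y\cdot x)$ and $l^{*}(y)r^{*}(x)=r^{*}(x\cdot y)$.

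The key preliminary observation is that the first bimodule axiom \eqref{eq_bimodule_1_1} for either candidate triple $(l^{*},r^{*},V^{*})$ or $(r^{*},l^{*},V^{*})$ is nothing but the transpose of \eqref{eq_bimodule_1_1} for $(l,r,V)$ (after relabelling $x\leftrightarrow y$); hence it holds automatically and contributes no constraint. Thus in each case only the analogues of \eqref{eq_bimodule_1_2} and \eqref{eq_bimodule_1_3} remain to be analysed.

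For (iii) I would substitute the two transposed relations above into the requirements $l^{*}(x)r^{*}(y)=l^{*}(y\cdot x)$ and $r^{*}(x)l^{*}(y)=r^{*}(x\cdot y)$ that $(l^{*},r^{*},V^{*})$ be a bimodule. Each collapses to the commutation identity $l^{*}(x)r^{*}(y)=r^{*}(y)l^{*}(x)$; transposing once more and using the injectivity of $T\mapsto T^{*}$ turns this into $r(y)l(x)=l(x)r(y)$, which is exactly (ii). This yields (ii)$\,\Leftrightarrow\,$(iii). The argument for (i) is the mirror image: writing out the defining relations for $(r^{*},l^{*},V^{*})$ and inserting the same transposed identities reduces both nontrivial relations to the commutation of $l^{*}$ and $r^{*}$, hence again to (ii), giving (ii)$\,\Leftrightarrow\,$(i); combining the two yields the full equivalence.

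The step I expect to be the main obstacle is the bookkeeping in this reduction: one must keep precise track of the order of the two factors and of whether the product argument is $x\cdot y$ or $y\cdot x$ when inserting the transposed forms of \eqref{eq_bimodule_1_2}--\eqref{eq_bimodule_1_3}. The whole proposition hinges on the fact that the two remaining relations for each dual triple collapse onto the single commutation condition (ii) rather than onto two inequivalent identities, and it is precisely here that the symmetry of the product enters; verifying this coincidence carefully is the crux of the argument.
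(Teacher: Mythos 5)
Your reduction for the equivalence (ii) $\Leftrightarrow$ (iii) is correct and is essentially the computation the paper performs with explicit pairings: the first axiom is automatic, and the two remaining axioms collapse, via the transposed identities $r^*(y)l^*(x)=l^*(y\cdot x)$ and $l^*(y)r^*(x)=r^*(x\cdot y)$, to the commutation $l^*(x)r^*(y)=r^*(y)l^*(x)$, hence to (ii). The problem is your claim that the case of $(r^*,l^*,V^*)$ is a ``mirror image'' that again collapses to commutation. Write out what (i) actually requires: substituting $l\mapsto r^*$ and $r\mapsto l^*$ into \eqref{eq_bimodule_1_2} and \eqref{eq_bimodule_1_3} gives $r^*(x)l^*(y)=r^*(y\cdot x)$ and $l^*(x)r^*(y)=l^*(x\cdot y)$. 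Your transposed identities express the right-hand sides as $r^*(y\cdot x)=l^*(x)r^*(y)$ and $l^*(x\cdot y)=r^*(x)l^*(y)$, so both conditions collapse to $r^*(x)l^*(y)=l^*(x)r^*(y)$ --- the two operators with their \emph{arguments interchanged} --- and not to the commutator identity $l^*(x)r^*(y)=r^*(y)l^*(x)$. Transposing back, (i) is equivalent to $l(x)r(y)=r(x)l(y)$ for all $x,y$, which by \eqref{eq_bimodule_1_2}--\eqref{eq_bimodule_1_3} reads $l(y\cdot x)=r(x\cdot y)$, whereas (ii) reads $l(y\cdot x)=r(y\cdot x)$. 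These are a priori different identities, and nothing in your argument bridges them.

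So the step you yourself flagged as the crux is exactly where the proof breaks: the two nontrivial relations for $(r^*,l^*,V^*)$ do not land on condition (ii). To complete the equivalence (i) $\Leftrightarrow$ (ii) you would need a further argument showing that, in the presence of the bimodule axioms and the near-associativity of $A$, the swapped identity $l(x)r(y)=r(x)l(y)$ is equivalent to $l(x)r(y)=r(y)l(x)$; this is not addressed. For what it is worth, the paper's own proof of (i) $\Rightarrow$ (ii) asserts $r^*(y)l^*(x)=r^*(y\cdot x)$ at the corresponding step, whereas the axiom as written yields $r^*(y)l^*(x)=r^*(x\cdot y)$, so the paper passes over the same point; your write-up should either supply the missing argument or confine itself to the implication (ii) $\Leftrightarrow$ (iii), which is the part that is airtight.
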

\begin{proof}

Let $( A, \cdot )$  be a nearly associative
algebra and $(l,r, V)$ be its associated
bimodule i.e. the linear maps
$l,r: A \rightarrow \End(V)$ satisfying
 \eqref{eq_bimodule_1_1}~-~\eqref{eq_bimodule_1_3}
and $V$ is a finite-dimensional linear space.
\begin{itemize}
\item
Suppose that $(r^*,l^*, V^*)$ is a bimodule of $( A, \cdot)$, i.e. with correspondences $l \rightarrow r^*$ and $r\rightarrow l^*$,
 \eqref{eq_bimodule_1_1}~-~\eqref{eq_bimodule_1_3} are satisfied.
For any $x,y\in  A$, $v\in V$, $u^*\in V^*$:
\begin{multline*}
\langle l(x)r(y)v, u^*\rangle=\langle v, r^*(y)l^*(x)u^*\rangle
=\langle v, r^*({y\cdot x})u^*\rangle \\
=\langle r({y\cdot x})v, u^*\rangle
=\langle r(y)l(x)v, u^*\rangle.
\end{multline*}
Therefore, the relation $l(x)r(y)=r(y)l(x)$ is satisfied.
\item
Suppose $l(x)r(y)=r(y)l(x)$ for any $x,y \in  A$.
For any $x,y\in  A$, $v\in V$, $u^*\in V^*$:
\begin{multline*}
\left<l^*(x)l^*(y)u^*, v\right>=\left<u^*,l(y)l(x)v \right>
=\left<u^*,r(x)r(y)v \right>=\left<r^*(y)r^*(x)u^*, v\right>,
\end{multline*}
yields $l^*(x)l^*(y)=r^*(y)r^*(x);$
\begin{multline*}
\left<l^*(x)r^*(y)u^*,v \right>=\left<u^*,r(y)l(x)v \right>
=\left<u^*,l(x)r(y)v \right>= \\
\left<u^*,l({y\cdot x})v \right>=\left<l^*({y\cdot x})u^*, v\right>,
\end{multline*}
yields $l^*(x)r^*(y)=l^*({y\cdot x})$;
\begin{multline*}
\left<r^*(y)l^*(x)u^*,v \right>=\left<u^*, l(x)r(y)v\right>=\left<u^*, r(y)l(x)v\right>= \\
\left<u^*, r({y\cdot x})v\right>=\left<r^*({y\cdot x})u^*, v\right>,
\end{multline*}
yields $r^*(y)l^*(x)=r^*({y\cdot x}).$
Thus, with correspondences $r^* \rightarrow l $ and $ l^*\rightarrow  r $,
\eqref{eq_bimodule_1_1}~-~\eqref{eq_bimodule_1_3} are satisfied.
\end{itemize}
Similarly, one obtains the equivalence between
$l(x) r(y)=r(y) l(x)$, for any $x,y \in  A$,
and $(l^*,r^*, V^*)$ being a bimodule of $( A, \cdot)$.
\end{proof}
\begin{rmk}\label{rmk_useful}
It is clear that $(L_{\cdot}^*, R_{\cdot}^*,  A^*)$ and
$(R_{\cdot}^*, L_{\cdot}^*,  A^*)$
are bimodules of the nearly associative algebra $( A, \cdot)$
if and only if $L$ and $R$ commute.
\end{rmk}
\begin{thm}\label{theo_matched_pair}
Let $( A, \cdot)$ and $(B, \circ)$ be two nearly associative  algebras.
Suppose that $(l_{ A}, r_{ A}, B)$ and $(l_{B}, r_{B},  A)$
are bimodules of $( A, \cdot)$
and $(B, \circ)$, respectively, where
$l_{ A}, r_{ A}:  A\rightarrow \End(B) $,
$l_{B}, r_{B}:B\rightarrow\End( A)$
are four linear maps satisfying for all $x, y\in  A$, $a,b\in B$ the following relations
\begin{subequations}
\begin{eqnarray}\label{eq_matched_pair_1_1}
r_{B}(l_{ A}(x)a)y+y\cdot (r_{B}(a)x)
-(l_{B}(a)y)\cdot x-l_{B}(r_{ A}(y)a)x=0,
\\
\label{eq_matched_pair_1_2}
r_{B}(a)(x\cdot y)-y\cdot (l_{B}(a)x)-r_{B}(r_{ A}(x)a)y=0,
\\
\label{eq_matched_pair_1_3}
l_{B}(a)(x\cdot y)-(r_{B}(a)y)\cdot x-l_{B}(l_{ A}(y)a)x=0,
\\
\label{eq_matched_pair_1_4}
r_{ A}(l_{B}(a)x)b+b\circ (r_{ A}(x)a)
-(l_{ A}(x)b)\circ a-l_{ A}(r_{B}(b)x)a=0,
\\
\label{eq_matched_pair_1_5}
	r_{ A}(x)(a\circ b)-b\circ(l_{ A}(x)a)-r_{ A}(r_{B}(a)x)b=0,
\\
\label{eq_matched_pair_1_6}
	l_{ A}(x)(a\circ b)-(r_{ A}(x)b)\circ a-l_{ A}(l_{B}(b)x)a=0.
\end{eqnarray}
\end{subequations}
	Then, $( A\oplus B, \ast)$ is a nearly associative  algebra,
	where
	\begin{eqnarray}\label{eq_bilinear_1}
	(x+a)\ast(y+b)=
	(x\cdot y+l_{B}(a)y+r_{B}(b)x)+
	(a\circ b+l_{ A}(x)b+r_{ A}(y)a).
	\end{eqnarray}
for all $x,y\in  A, a, b\in B$.
\end{thm}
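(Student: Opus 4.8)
The plan is to verify directly that the product $\ast$ of \eqref{eq_bilinear_1} satisfies the nearly associativity identity \eqref{eq_identity} on $A\oplus B$. Since $\ast$ is bilinear, it suffices to check $X\ast(Y\ast Z)=(Z\ast X)\ast Y$ for arbitrary $X=x+a$, $Y=y+b$, $Z=z+c$ with $x,y,z\in A$ and $a,b,c\in B$. First I would apply \eqref{eq_bilinear_1} twice to each side and then split the outcomes into their $A$-component (the part lying in $A$) and their $B$-component (the part lying in $B$). The claimed identity is then equivalent to the two separate equalities of $A$-components and of $B$-components, and each expansion yields nine terms per side.

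For the $A$-component I would match the eighteen terms as follows. The purely associative terms $x\cdot(y\cdot z)$ and $(z\cdot x)\cdot y$ cancel by the nearly associativity \eqref{eq_identity} of $(A,\cdot)$. Three further pairs cancel using only the bimodule axioms \eqref{eq_bimodule_1_1}--\eqref{eq_bimodule_1_3} for $(l_B,r_B,A)$: indeed $l_B(a)l_B(b)z=r_B(b)r_B(a)z$ by \eqref{eq_bimodule_1_1}, while $l_B(a)r_B(c)y=l_B(c\circ a)y$ and $r_B(b)l_B(c)x=r_B(b\circ c)x$ by \eqref{eq_bimodule_1_2} and \eqref{eq_bimodule_1_3}. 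The remaining terms are dispatched by the compatibility relations: with the substitution $a\to a,\ x\to y,\ y\to z$, condition \eqref{eq_matched_pair_1_3} rewrites $l_B(a)(y\cdot z)$ as $(r_B(a)z)\cdot y+l_B(l_A(z)a)y$; with $a\to b,\ x\to z,\ y\to x$, condition \eqref{eq_matched_pair_1_2} rewrites $r_B(b)(z\cdot x)$ as $x\cdot(l_B(b)z)+r_B(r_A(z)b)x$; and, read with $x\to y,\ y\to x,\ a\to c$, condition \eqref{eq_matched_pair_1_1} supplies the last relation $r_B(l_A(y)c)x+x\cdot(r_B(c)y)=(l_B(c)x)\cdot y+l_B(r_A(x)c)y$. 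A short accounting then shows every term on one side is absorbed by terms on the other, so the two $A$-components coincide.

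The $B$-component is treated identically with the roles of the two algebras interchanged: the nearly associativity of $(B,\circ)$ disposes of the purely $\circ$-associative pair, the bimodule axioms for $(l_A,r_A,B)$ handle the terms built only from $l_A,r_A$ and $\circ$, and the relations \eqref{eq_matched_pair_1_4}--\eqref{eq_matched_pair_1_6} — which are the mirror images of \eqref{eq_matched_pair_1_1}--\eqref{eq_matched_pair_1_3} under $(A,\cdot)\leftrightarrow(B,\circ)$ and $l_A,r_A\leftrightarrow l_B,r_B$ — account for the mixed terms via the analogous substitutions. Once both components are verified, \eqref{eq_identity} holds for $\ast$ and $(A\oplus B,\ast)$ is nearly associative.

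I expect the main difficulty to be organisational rather than conceptual: correctly tracking all the cross-terms and, above all, pinning down the precise index substitutions that convert each of \eqref{eq_matched_pair_1_1}--\eqref{eq_matched_pair_1_6} into the exact cancellation required. The six compatibility conditions are not arbitrary — they are manufactured to be exactly those cancellations — so the real content of the verification lies in confirming that the listed relations are precisely the ones forced by the term-by-term matching above, with nothing missing and nothing redundant.
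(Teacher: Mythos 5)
Your proposal is correct and follows essentially the same route as the paper: expand $X\ast(Y\ast Z)$ and $(Z\ast X)\ast Y$ via \eqref{eq_bilinear_1}, separate the $A$- and $B$-components, and match the resulting terms using nearly associativity of the two factors, the bimodule axioms \eqref{eq_bimodule_1_1}--\eqref{eq_bimodule_1_3}, and the compatibility relations \eqref{eq_matched_pair_1_1}--\eqref{eq_matched_pair_1_6}. In fact your term-by-term bookkeeping (which pairs cancel by which axiom, and with which index substitutions) is more explicit than the paper's, which stops after displaying the two expansions, and all of your stated substitutions check out.
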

\begin{proof}
For any $x,y,z\in  A$, and for any $a,b,c\in B$, we have
\begin{multline*}
(x+a)\ast ((y+b)\ast(z+c))= x\cdot (y\cdot z)+
\{x\cdot (l_{ B}(b)z)+r_{ B}(r_{ A}(z)b)x \} +l_{ B}(a)(y\cdot z)
\cr
+
\{x\cdot (r_{ B}(c)y)+r_{ B}(l_{ A}(y)c)x \}+l_{ B}(a)(l_{ B}(b)z)+r_{ B}(b\circ c)x\cr
+l_{ B}(a)(r_{ B}(c)y)
+a\circ (b\circ c)+\{a\circ (l_{ A}(y)c)+r_{ B}(r_{ B}(c)y)a\}\cr
+\{a\circ (r_{A}(z)b)+r_{ B}(l_{ B}(b)z)a\}+l_{ A}(x)(l_{ A}(y)c)
+ l_{ A}(x)(b\circ c)+l_{ A}(x)(r_{ A}(z)b)+r_{ A}(y\cdot z)a;
\\*[0,3cm]
((z+c)\ast (x+a)  )\ast (y+b)=
(z\cdot x)\cdot y+\{(l_{ B}(c)x)\cdot y+l_{ B}(r_{ A}(x)c)y   \}+r_{ B}(x)(z\cdot x)\cr
+ \{ (r_{ A}(a)z)\cdot y+l_{ B}(l_{ A}(z)a)y \}+l_{ B}(c\circ a)y+r_{ B}(b)(l_{ B}(c)x)\cr
+r_{ B}(b)(r_{ B}(a)z) (c\circ a)\circ y +\{(l_{ A}(z)a)\circ b +l_{ A}(r_{ B}(a)z)b\}\cr
+\{(r_{ A}(x)c)\circ b+l_{ A}(l_{ B}(c)x)b \}+r_{ A}(y)(r_{ A}(x)c)
+r_{ A}(y)(c\circ a)+r_{ A}(y)(l_{ A}(z)a)+l_{ A}(z\cdot x)b.
\end{multline*}

Using \eqref{eq_matched_pair_1_1}~-~\eqref{eq_matched_pair_1_6} and
that $(l_{ A}, r_{ A},  B)$ and $(l_{ B}, r_{ B},  A)$
are bimodules of $( A,\cdot )$ and $( B, \circ )$, respectively,
we derive that
 $( A\oplus B, \ast)$ is a nearly associative  algebra.
\end{proof}
\begin{defi}[\cite{DassoundoSilvestrov:nearlyhomass:Majid:matchedpeirsLiegrYangBa}]\label{Dfn_Matched_pair_Lie}
	Let $(\mathcal{G}, [.,.]_{_\mathcal{G}})$ and $({\mathcal{H}, [.,.]_{_\mathcal{H}}})$ be two Lie algebras such that
	$\rho:\mathcal{G}\rightarrow \End(\mathcal{H})$ and $\mu:\mathcal{H}\rightarrow \End(\mathcal{G})$ are representations of
	$\mathcal{G}$ and $\mathcal{H}$, respectively.
 A matched pair of Lie algebras  $\mathcal{G}$ and $\mathcal{H}$ is 	$(\mathcal{G}, \mathcal{H}, \rho, \mu)$
 such that $\rho$ and $\mu$ are satisfying the following relations, for all $x, y \in \mathcal{G}$ and $a, b \in  \mathcal{H}$,
\begin{subequations}
	\begin{eqnarray}\label{eq_matehd_pair_Lie_1}
	\rho(x)[a, b]_{_\mathcal{G}} - [\rho(x)a, b]_{_\mathcal{H}} - [a, \rho(x)b]_{_\mathcal{H}} + \rho(\mu(a)x)b - \rho(\mu(b)x)a = 0,
	\\
\label{eq_matehd_pair_Lie_2}
	\mu(a)[x, y]_{_\mathcal{G}} - [\mu(a)x, y]_{_\mathcal{G}} - [x, \mu(a)y]_{_\mathcal{G}} + \mu(\rho(x)a)y - \mu(\rho(y)a)x = 0.
	\end{eqnarray}
\end{subequations}
\end{defi}
\begin{cor}
Let $( A,  B, l_{ A}, r_{ A}, l_{ B}, r_{ B})$ be a matched pair
of the nearly associative  algebras $( A, \cdot)$ and $( B, \circ )$.
Then $(\mathcal{G}( A), \mathcal{G}( B), l_{ A}-r_{ A}, l_{ B}-r_{ B})$ is a matched pair
of Lie algebras $\mathcal{G}( A)$ and $\mathcal{G}( B)$.
\end{cor}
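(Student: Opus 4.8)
The plan is to recognize the three defining ingredients of a matched pair of Lie algebras and then to extract the two compatibility conditions \eqref{eq_matehd_pair_Lie_1}--\eqref{eq_matehd_pair_Lie_2} from the nearly associative structure on $A\oplus B$ rather than by manipulating the six relations \eqref{eq_matched_pair_1_1}--\eqref{eq_matched_pair_1_6} directly.

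First I would record the two easy ingredients. That $\mathcal{G}(A)$ and $\mathcal{G}(B)$ are genuine Lie algebras is immediate from Proposition~\ref{prop_Lie_adm}, since $(A,\cdot)$ and $(B,\circ)$ are nearly associative. That $\rho:=l_A-r_A$ and $\mu:=l_B-r_B$ are representations of $\mathcal{G}(A)$ on $\mathcal{G}(B)$ and of $\mathcal{G}(B)$ on $\mathcal{G}(A)$, respectively, is exactly Proposition~\ref{proposition_representation_Lie} applied to the bimodules $(l_A,r_A,B)$ and $(l_B,r_B,A)$. Thus the only genuine content left is the pair of compatibility identities \eqref{eq_matehd_pair_Lie_1} and \eqref{eq_matehd_pair_Lie_2}.

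For these I would avoid a head-on combination of \eqref{eq_matched_pair_1_1}--\eqref{eq_matched_pair_1_6} and instead exploit Theorem~\ref{theo_matched_pair}: the matched pair data equip $A\oplus B$ with a nearly associative product $\ast$ given by \eqref{eq_bilinear_1}, so by Proposition~\ref{prop_Lie_adm} the commutator $[\cdot,\cdot]_\ast$ makes $A\oplus B$ into a Lie algebra. A short computation separating the $A$-components and $B$-components of \eqref{eq_bilinear_1} then shows that this commutator factors as $[x+a,y+b]_\ast=\big([x,y]_{\cdot}+\mu(a)y-\mu(b)x\big)+\big([a,b]_{\circ}+\rho(x)b-\rho(y)a\big)$, which is precisely the semidirect-type bracket attached to the data $(\mathcal{G}(A),\mathcal{G}(B),\rho,\mu)$ in the Lie-algebraic matched pair construction underlying Definition~\ref{Dfn_Matched_pair_Lie}.

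Finally I would invoke the standard characterization behind Definition~\ref{Dfn_Matched_pair_Lie}: for two Lie algebras with representations $\rho$ and $\mu$, the bracket above obeys the Jacobi identity if and only if \eqref{eq_matehd_pair_Lie_1} and \eqref{eq_matehd_pair_Lie_2} hold. Since we have just exhibited this bracket as the commutator of the nearly associative product $\ast$, hence as a bona fide Lie bracket, and since $\rho,\mu$ are already known to be representations, the two compatibility conditions must be satisfied, giving the matched pair $(\mathcal{G}(A),\mathcal{G}(B),\rho,\mu)$. I expect the main obstacle to be the bookkeeping in the commutator computation, namely matching the $A$- and $B$-components of \eqref{eq_bilinear_1} against the semidirect bracket, together with stating precisely the converse direction of the matched-pair/Jacobi equivalence so that the Lie-admissibility of $\ast$ really does force \eqref{eq_matehd_pair_Lie_1}--\eqref{eq_matehd_pair_Lie_2}.
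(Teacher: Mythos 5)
Your proposal is correct, and it reaches the conclusion by a genuinely different route from the paper. The paper's proof, after invoking Proposition~\ref{proposition_representation_Lie} exactly as you do, simply asserts ``by direct calculation'' that \eqref{eq_matehd_pair_Lie_1} is equivalent to \eqref{eq_matched_pair_1_1}--\eqref{eq_matched_pair_1_3} and \eqref{eq_matehd_pair_Lie_2} to \eqref{eq_matched_pair_1_4}--\eqref{eq_matched_pair_1_6}; that is, it antisymmetrizes the six nearly associative compatibility relations head-on (and the computation is not displayed). You instead pass through the sum algebra: Theorem~\ref{theo_matched_pair} makes $(A\oplus B,\ast)$ nearly associative, Proposition~\ref{prop_Lie_adm} makes its commutator a Lie bracket, and your component bookkeeping correctly identifies that commutator with the bracket $[x+a,y+b]=\bigl([x,y]_{\cdot}+\mu(a)y-\mu(b)x\bigr)+\bigl([a,b]_{\circ}+\rho(x)b-\rho(y)a\bigr)$, $\rho=l_A-r_A$, $\mu=l_B-r_B$. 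The paper's route is self-contained but computationally opaque; yours is conceptually cleaner, reuses results already proved, and explains the corollary as ``the commutator functor sends the nearly associative double to the Lie double.'' The price is the imported converse of Majid's construction, which the paper states only in the forward direction (and only in the Hom setting). To make your last step airtight you should record the component extraction explicitly: expanding the Jacobi identity of $[\cdot,\cdot]_\ast$ on $x,y\in A$ and $a\in B$, its $\mathcal{G}(A)$-component is precisely \eqref{eq_matehd_pair_Lie_2}, while on $x\in A$ and $a,b\in B$ the $\mathcal{G}(B)$-component is precisely \eqref{eq_matehd_pair_Lie_1} (the complementary components reproduce the representation identities, so in fact Proposition~\ref{proposition_representation_Lie} also drops out for free). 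With that two-line verification in place, your argument is complete.
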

\begin{proof}

Let $( A,  B, l_{ A}, r_{ A}, l_{ B}, r_{ B})$ be a matched pair of
the nearly associative  algebras $( A, \cdot)$ and $( B, \circ )$.
In view of Proposition~\ref{proposition_representation_Lie},
the linear maps $l_{ A}-r_{ A}:  A\longrightarrow \End( B)$ and
$l_{ B}-r_{ B}:  B\longrightarrow\End( A)$ are representations of
the underlying Lie algebras $\mathcal{G}( A)$ and $\mathcal{G}( B)$, respectively. Therefore,
by direct calculation we have   \eqref{eq_matehd_pair_Lie_1} is equivalent to
 \eqref{eq_matched_pair_1_1}~-~\eqref{eq_matched_pair_1_3} and similarly,
 \eqref{eq_matehd_pair_Lie_2} is equivalent to
 \eqref{eq_matched_pair_1_4}~-~\eqref{eq_matched_pair_1_6}.
\end{proof}
\begin{pro}
Let $(A,\cdot)$ be a nearly associative algebra. Suppose that there is
a nearly associative algebra structure $\circ$ on its the dual
space $A^*$. If in addition, the linear maps $L$ and  $R$ commute then
 $(A, A^*, R_{\cdot}^*, L_{\cdot}^*, R_{\circ}^*, L_{\circ}^*)$ is
 a matched pair of the nearly associative algebras
 $(A, \cdot)$ and $(A^*, \circ)$ if and only if the following relations
 are satisfied for any $x,y\in A$ and $a\in A^*$
 \begin{subequations}
 \begin{eqnarray}\label{eq:matchedpair1}
 L_{\circ}^*(R_{\cdot}^*(x)a)y-y\cdot (L_{\circ}^*(a)x)-
 (R_{\circ}^*(a)y)\cdot x-R_{\circ}^*(L_{\cdot}^*(y)a)x=0,
 \\
 \label{eq:matchedpair2}
 L_{\circ}^*(a)(x\cdot y)-y\cdot (R_{\circ}^*(a)x)-L_{\circ}^*(L_{\cdot}^*(x)a)y=0,
 \\
 \label{eq:matchedpair3}
 R_{\circ}^*(a)(x\cdot y)-(L_{\circ}^*(a)y)\cdot x-R_{\circ}^*(R_{\cdot}^*(y)a)x=0.
\end{eqnarray}
 \end{subequations}
\end{pro}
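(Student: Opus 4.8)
The plan is to deduce the statement from Theorem~\ref{theo_matched_pair}, which characterises a matched pair of nearly associative algebras by the six compatibility identities \eqref{eq_matched_pair_1_1}--\eqref{eq_matched_pair_1_6}. First I would spell out the data of the candidate matched pair: the second algebra is $B=A^*$ with product $\circ$, and the four structure maps are $l_A=R_\cdot^*$, $r_A=L_\cdot^*$, $l_B=R_\circ^*$, $r_B=L_\circ^*$. Before the compatibility identities can even be invoked, the two dual triples must be bimodules. Here the hypothesis that $L$ and $R$ commute is exactly what is needed: by Remark~\ref{rmk_useful}, which is Proposition~\ref{Prop_Dual_bimodule} applied to the regular bimodule $(L,R,A)$, the commutation $L(x)R(y)=R(y)L(x)$ is equivalent to $(R_\cdot^*,L_\cdot^*,A^*)$ being a bimodule of $(A,\cdot)$, and the corresponding statement for $(A^*,\circ)$ makes $(R_\circ^*,L_\circ^*,A)$ a bimodule of $(A^*,\circ)$.

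With the bimodule hypotheses secured, Theorem~\ref{theo_matched_pair} reduces the matched pair property to the validity of \eqref{eq_matched_pair_1_1}--\eqref{eq_matched_pair_1_6} under the above substitution. The next step is mechanical: substituting $l_A=R_\cdot^*$, $r_A=L_\cdot^*$, $l_B=R_\circ^*$, $r_B=L_\circ^*$ into \eqref{eq_matched_pair_1_1}, \eqref{eq_matched_pair_1_2} and \eqref{eq_matched_pair_1_3} and collecting terms yields \eqref{eq:matchedpair1}, \eqref{eq:matchedpair2} and \eqref{eq:matchedpair3}. So the three displayed relations are nothing but the first three compatibility identities written out in the dual setting, and this part requires only bookkeeping.

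The heart of the argument is to show that the remaining identities \eqref{eq_matched_pair_1_4}--\eqref{eq_matched_pair_1_6}, which are equalities in $A^*$, carry no new information once \eqref{eq:matchedpair1}--\eqref{eq:matchedpair3} hold. I would argue by transposition. Since the pairing between $A$ and $A^*$ is non-degenerate, an identity in $A^*$ holds if and only if it pairs to zero against every $z\in A$. Pairing \eqref{eq_matched_pair_1_4}--\eqref{eq_matched_pair_1_6} (after substitution) against an arbitrary $z\in A$ and repeatedly applying the adjunction relations $\langle L_\cdot^*(x)a,z\rangle=\langle a,x\cdot z\rangle$, $\langle R_\cdot^*(x)a,z\rangle=\langle a,z\cdot x\rangle$, $\langle L_\circ^*(a)x,b\rangle=\langle x,a\circ b\rangle$ and $\langle R_\circ^*(a)x,b\rangle=\langle x,b\circ a\rangle$ turns each identity into a scalar identity in $x\in A$ and $a,b\in A^*$. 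The point to verify is that these scalar identities coincide, after relabelling, with those obtained by pairing \eqref{eq:matchedpair1}--\eqref{eq:matchedpair3} against an arbitrary element of $A^*$; hence \eqref{eq_matched_pair_1_4}--\eqref{eq_matched_pair_1_6} are equivalent to \eqref{eq:matchedpair1}--\eqref{eq:matchedpair3}.

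I expect this last matching to be the main obstacle. The difficulty lies in the bookkeeping of transposition, keeping track of which operator acts on which argument (so that, for example, $(R_\circ^*(a)y)\cdot x$ pairs correctly against $z$), and in reconciling the two orders in which the multiplication operators appear after dualisation. It is precisely here that the commutativity hypothesis enters a second time, through its equivalent form $L(x)R(y)=R(y)L(x)$, or dually the commutation of $L_\cdot^*$ and $R_\cdot^*$ from Proposition~\ref{Prop_Dual_bimodule}: it is what permits a term coming from \eqref{eq_matched_pair_1_4}--\eqref{eq_matched_pair_1_6} to be recast in the shape produced by \eqref{eq:matchedpair1}--\eqref{eq:matchedpair3}. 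Once this reconciliation is carried out for each of the three pairs, both directions of the asserted equivalence follow at once.
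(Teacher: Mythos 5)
Your proposal follows essentially the same route as the paper's proof: secure the dual bimodule structures from the commutation of $L$ and $R$ via Remark~\ref{rmk_useful} and Proposition~\ref{Prop_Dual_bimodule}, substitute $l_A=R_\cdot^*$, $r_A=L_\cdot^*$, $l_B=R_\circ^*$, $r_B=L_\circ^*$ into Theorem~\ref{theo_matched_pair} so that \eqref{eq_matched_pair_1_1}--\eqref{eq_matched_pair_1_3} become the three displayed relations, and then show by pairing against arbitrary elements that \eqref{eq_matched_pair_1_4}--\eqref{eq_matched_pair_1_6} are just the dual transcriptions of \eqref{eq_matched_pair_1_1}--\eqref{eq_matched_pair_1_3} and hence redundant. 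This is exactly the structure of the argument given in the paper.
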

\begin{proof}
Since $L$ and $R$ commute, according to Remark~\ref{rmk_useful} and
Proposition~\ref{Prop_Dual_bimodule}, both
$(R_{\cdot}^*, L_{\cdot}^*, A^*)$ and $(L_{\cdot}^*, R_{\cdot}^*, A^*)$
are bimodules of $(A, \cdot)$. Setting
$l_{A}=R_{\cdot}^*$, $r_{A}=L_{\cdot}^*, l_{B}=R_{\circ}^*$ and
$r_{B}=L_{\circ}^*$ in Theorem~\ref{theo_matched_pair}
the equivalences among
Eq~\eqref{eq_matched_pair_1_1} and  \eqref{eq:matchedpair1},
 \eqref{eq_matched_pair_1_2} and  \eqref{eq:matchedpair2}, and finally
 \eqref{eq_matched_pair_1_3} and  \eqref{eq:matchedpair3}
are straightforward. Besides, for any $x,y\in A$ and any $a,b\in A^*$, we have
\begin{eqnarray*}
&&\langle
L_{\circ}^*(R_{\cdot}^*(x)a)y, b
\rangle=
\langle
y, L_{\circ}(R_{\cdot}^*(x)a)b
\rangle=
\langle
y,(R_{\cdot}^*(x)a)\circ b
\rangle,\cr
&&\langle
y\cdot (L_{\circ}^*(a)x), b
\rangle=
\langle
R_{\cdot}(L_{\circ}^*(a)x)y,b
\rangle=
\langle
y, R_{\cdot}^*(L_{\circ}^*(a)x)b
\rangle,\cr
&&\langle
(R_{\circ}^*(a)y)\cdot x, b
\rangle=
\langle
R_{\circ}^*(a)y,
R_{\cdot}^*(x)b
\rangle=
\langle
y, (R_{\cdot}^*(x)b)\circ a
\rangle, \cr
&&\langle
R_{\circ}^*(L_{\cdot}^*(y)a)x, b
\rangle=
\langle
L_{\circ}^*(b)x, L_{\cdot}^*(y)a
\rangle=
\langle
y\cdot (L_{\circ}^*(b)x), a
\rangle=
\langle
y, R_{\cdot}^*(L_{\circ}^*(b)x)a
\rangle, \cr
&&\langle
L_{\circ}^*(a)(x\cdot y), b
\rangle=
\langle
R_{\cdot}(y)x, a\circ b
\rangle=
\langle
x, R_{\cdot}^*(y)(a\circ b)
\rangle, \cr
&&\langle
y\cdot (R_{\circ}^*(a)x), b
\rangle=
\langle
R_{\circ}^*(a)x,
L_{\cdot}^*(y)b
\rangle=
\langle
x, (L_{\cdot}^*(y)b)\circ a
\rangle, \cr
&&\langle
L_{\circ}^*(L_{\cdot}^*(x)a)y, b
\rangle=
\langle
R_{\circ}^*(b)y, L_{\circ}^*(x)a
\rangle=
\langle
x\cdot (R_{\circ}^*(b)y), a
\rangle=
\langle
x, R_{\cdot}^*(R_{\circ}^*(b)y)a
\rangle, \cr
&&\langle
R_{\circ}^*(a)(x\cdot y), b
\rangle=
\langle
L_{\cdot}(x)y, b\circ a
\rangle=
\langle
y,L_{\cdot}(x)^*(b\circ a)
\rangle, \cr
&&\langle
(L_{\circ}^*(a)y)\cdot x, b
\rangle=
\langle
L_{\circ}^*(a)y, R_{\cdot}^*(b)
\rangle=
\langle
y, a\circ (R_{\cdot}^*(x)b)
\rangle, \cr
&&\langle
R_{\circ}^*(R_{\cdot}^*(y)a)x, b
\rangle=
\langle
L_{\circ}^*(b)x, R_{\cdot}^*(y)a
\rangle=
\langle
(L_{\circ}^*(b)x)\cdot y, a
\rangle=
\langle
y, L_{\cdot}^*(L_{\circ}^*(b)x)a
\rangle
\end{eqnarray*}
Then, Eq~\eqref{eq_matched_pair_1_1} holds if and only if
 \eqref{eq_matched_pair_1_4} holds,
Eq~\eqref{eq_matched_pair_1_2} holds if and only if
 \eqref{eq_matched_pair_1_5} holds, and finally
Eq~\eqref{eq_matched_pair_1_3} holds if and only if
 \eqref{eq_matched_pair_1_6} holds.
\end{proof}
\section{Manin triple and  bialgebra of nearly associative algebras}\label{section4}

\begin{defi}
 A bilinear form $\mathfrak B$
 on a nearly associative  algebra $(A,\cdot)$ is called
 {left-invariant} if $for all x,y,z\in A$,
\begin{equation}
\mathfrak{B}(x\cdot y,z)=\mathfrak{B}(x, y\cdot z).
\end{equation}
\end{defi}

\begin{pro}
Let $(A,\cdot)$ be a nearly associative  algebra. If there is a
nondegenerate symmetric invariant bilinear form $\mathfrak B$
defined on $A$,
then as bimodules of the nearly associative algebra $(A,\cdot)$, $(L,R,
A)$ and $(R^*,L^*, A^*)$ are equivalent. Conversely, if $(L,R, A)$ and
$(R^*,L^*, A^*)$ are equivalent
bimodules of a nearly associative algebra $(A,\cdot)$,
then there exists a nondegenerate invariant bilinear
form $\mathfrak B$ on $A$.
\end{pro}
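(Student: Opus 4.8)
The plan is to make the abstract equivalence of bimodules explicit through the linear map $\varphi\colon A\to A^{*}$ that a bilinear form induces, and to translate the two intertwining conditions defining such an equivalence into the symmetry and invariance of the form. Since a nondegenerate form corresponds to an invertible $\varphi$, I assume $A$ finite-dimensional throughout, consistent with the standing hypothesis in the definition of dual bimodules.

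For the forward implication, given a nondegenerate symmetric invariant form $\mathfrak B$, I would set $\langle\varphi(x),y\rangle=\mathfrak B(x,y)$, so that $\varphi$ is a linear isomorphism. Recalling $L(x)y=x\cdot y$, $R(x)y=y\cdot x$, together with the dual-map convention \eqref{dual1}--\eqref{dual2}, namely $\langle L^{*}(x)u^{*},v\rangle=\langle u^{*},x\cdot v\rangle$ and $\langle R^{*}(x)u^{*},v\rangle=\langle u^{*},v\cdot x\rangle$, I would check that the two equivalence conditions $\varphi\,L(x)=R^{*}(x)\,\varphi$ and $\varphi\,R(x)=L^{*}(x)\,\varphi$ amount, after pairing against an arbitrary $y\in A$, to the scalar identities $\mathfrak B(x\cdot z,y)=\mathfrak B(z,y\cdot x)$ and $\mathfrak B(z\cdot x,y)=\mathfrak B(z,x\cdot y)$ respectively. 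The second is precisely invariance; the first I would obtain from the chain $\mathfrak B(x\cdot z,y)=\mathfrak B(y,x\cdot z)=\mathfrak B(y\cdot x,z)=\mathfrak B(z,y\cdot x)$, using symmetry at the outer equalities and invariance in the middle. Since $(L,R,A)$ is already a bimodule and $\varphi$ is an isomorphism, applying $\varphi$ to each of \eqref{eq_bimodule_1_1}--\eqref{eq_bimodule_1_3} and cancelling $\varphi$ on the right shows that the three axioms transport, so $(R^{*},L^{*},A^{*})$ is a bimodule equivalent to $(L,R,A)$, as claimed.

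For the converse, an equivalence furnishes an isomorphism $\varphi\colon A\to A^{*}$ satisfying both intertwining relations above. Setting $\mathfrak B(x,y)=\langle\varphi(x),y\rangle$, invertibility of $\varphi$ gives nondegeneracy, and the relation $\varphi\,R(x)=L^{*}(x)\,\varphi$ unwinds, exactly as in the forward direction, to $\mathfrak B(x\cdot y,z)=\mathfrak B(x,y\cdot z)$, i.e.\ invariance. Because the statement requires only a nondegenerate invariant form, and not a symmetric one, no symmetrization step is needed and this completes the converse.

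The computation is mechanical; the one delicate point is the bookkeeping of conventions, that is, fixing once and for all which slot of $\mathfrak B$ feeds $\varphi$ and matching it against \eqref{dual1}--\eqref{dual2} so that the left and right actions intertwine in the correct order. The genuinely load-bearing step is deriving $\mathfrak B(x\cdot z,y)=\mathfrak B(z,y\cdot x)$ in the forward direction, since that is where symmetry and invariance are used together; everything else is transport of structure along the isomorphism.
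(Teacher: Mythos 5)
The paper states this proposition without giving any proof, so there is nothing to compare your argument against; judged on its own, your proposal is correct and is the standard argument (the one used for the analogous statement about Frobenius-type forms on associative and anti-flexible algebras). Your two intertwining identities are the right ones: pairing $\varphi L(x)=R^{*}(x)\varphi$ and $\varphi R(x)=L^{*}(x)\varphi$ against $y$ does yield $\mathfrak B(x\cdot z,y)=\mathfrak B(z,y\cdot x)$ and $\mathfrak B(z\cdot x,y)=\mathfrak B(z,x\cdot y)$ under the paper's conventions \eqref{dual1}--\eqref{dual2}, the first follows from symmetry and left-invariance exactly as you say, and the converse correctly uses only the second relation, matching the weaker conclusion (invariant but not necessarily symmetric) in the statement. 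The finite-dimensionality caveat is appropriate and consistent with the paper's definition of the dual maps.

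Two remarks worth recording. First, your transport step does more than "transport": since $(R^{*},L^{*},A^{*})$ is not a bimodule of an arbitrary nearly associative algebra (by Proposition~\ref{Prop_Dual_bimodule} and Remark~\ref{rmk_useful} this requires $L$ and $R$ to commute), your argument implicitly proves that the existence of a nondegenerate symmetric left-invariant form forces $L(x)R(y)=R(y)L(x)$; this is a genuine consequence of the hypotheses (one can check directly that symmetry, invariance and \eqref{eq_identity} give $\mathfrak B((x\cdot y)\cdot z,w)=\mathfrak B(x\cdot y,z\cdot w)=\mathfrak B(y,(z\cdot w)\cdot x)=\mathfrak B(y,w\cdot(x\cdot z))=\mathfrak B(w,(x\cdot z)\cdot y)$, whence $(x\cdot y)\cdot z=(x\cdot z)\cdot y$ by nondegeneracy), and it is consistent with the standing hypothesis "$L$ and $R$ commute" imposed in the subsequent results of Section~\ref{section4}. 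It would strengthen the write-up to make this explicit rather than leaving it buried in the transport step. Second, a minor bookkeeping point: when you say the bimodule axioms "transport", spell out that conjugation by $\varphi$ sends \eqref{eq_bimodule_1_2} for $(L,R,A)$ to $R^{*}(x)L^{*}(y)=R^{*}(y\cdot x)$, which is axiom \eqref{eq_bimodule_1_2} for the triple $(R^{*},L^{*},A^{*})$ with $l=R^{*}$, $r=L^{*}$; the reader has to track which of $R^{*},L^{*}$ plays the role of $l$ and which of $r$, and this is exactly the place where sign/order errors creep in.
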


\begin{defi} A  Manin triple of
nearly associative algebras is a
triple of nearly associative algebras $(A,A_{1},A_{2})$ together with a
nondegenerate symmetric invariant bilinear form $\mathfrak{B}$ on $A$ such that the following conditions are satisfied.
\begin{enumerate}[label=\upshape{(\roman*)},left=7pt]
 \item $A_{1}$ and $A_{2}$
nearly associative subalgebras of $A$;
 \item as linear spaces,
  $A=A_{1}\oplus A_{2}$;
\item  $A_{1}$ and $A_{2}$ are isotropic with
respect to $\mathfrak{B}$, i.e.
for any $x_1,y_1\in A_1$ and any $x_2, y_2\in A_2$,
$\mathfrak{B}(x_1,y_1)=0=\mathfrak{B}(x_2,y_2)=0.$
\end{enumerate}

\end{defi}
\begin{defi} Let  $(A,\cdot)$ be a nearly associative algebra. Suppose that $\circ$  is a
nearly associative algebra structure on the dual space $A^*$ of $A$ and
there is a nearly associative  algebra structure on the direct sum $A\oplus A^*$ of
the underlying linear spaces of $A$ and $A^*$ such that
$(A,\cdot)$ and $(A^*,\circ)$ are subalgebras and the natural
symmetric bilinear form on $A\oplus A^*$ given by $\forall x,y\in A; \forall a^*,b^*\in A^*,$
\begin{equation}\label{eq:sbl}\mathfrak{B}_d(x+a^*,y+b^*):=
\langle  a^*,y\rangle +\langle
x,b^*\rangle ,\;
\end{equation} is
left-invariant, then $(A\oplus A^*,A,A^*)$ is called a
standard Manin triple of nearly associative algebras associated to
$\mathfrak{B}_d$.
\end{defi}
 Obviously, a standard Manin triple of nearly associative algebras
 is a Manin triple of nearly associative algebras.
 By symmetric role of $A$ and $A^*$, we have

\begin{pro}
Every Manin triple of nearly associative algebras is isomorphic to a
standard one.
\end{pro}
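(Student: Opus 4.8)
The plan is to construct an explicit isomorphism from an arbitrary Manin triple $(A,A_1,A_2)$ with form $\mathfrak{B}$ onto a standard one of the shape $(A_1\oplus A_1^*,A_1,A_1^*)$. The starting observation is that nondegeneracy of $\mathfrak{B}$ together with the isotropy of the two subalgebras forces $\mathfrak{B}$ to restrict to a \emph{nondegenerate pairing} between $A_1$ and $A_2$. Indeed, since $A_1$ is isotropic we have $\mathfrak{B}(A_1,A_1)=0$, so if some $x\in A_1$ paired trivially with all of $A_2$ then, using $A=A_1\oplus A_2$, it would pair trivially with all of $A$, contradicting nondegeneracy; the symmetric argument applies to $A_2$. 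This pairing induces a linear isomorphism $\psi:A_2\to A_1^*$ determined by $\langle \psi(a),x\rangle=\mathfrak{B}(x,a)$ for $x\in A_1$, $a\in A_2$.

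First I would transport the nearly associative product of the subalgebra $A_2$ along $\psi$ to obtain a product $\circ$ on $A_1^*$, so that $\psi:(A_2,\cdot)\to(A_1^*,\circ)$ is an algebra isomorphism by construction. I then assemble the map $\Phi:A\to A_1\oplus A_1^*$ that is the identity on $A_1$ and equals $\psi$ on $A_2$; it is a linear isomorphism because $A=A_1\oplus A_2$. Transporting the product of $A$ along $\Phi$ equips $A_1\oplus A_1^*$ with a nearly associative structure for which, by construction, both $A_1$ and $A_1^*$ are subalgebras, since they are the images under $\Phi$ of the subalgebras $A_1$ and $A_2$.

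Next I would check that $\Phi$ carries $\mathfrak{B}$ to the canonical form $\mathfrak{B}_d$ of the standard construction. Writing a general element of $A$ as $x+a$ with $x\in A_1$, $a\in A_2$, bilinearity and the two isotropy conditions collapse $\mathfrak{B}(x+a,y+b)$ to $\mathfrak{B}(x,b)+\mathfrak{B}(a,y)$; using symmetry of $\mathfrak{B}$ and the definition of $\psi$ this is exactly $\langle x,\psi(b)\rangle+\langle \psi(a),y\rangle=\mathfrak{B}_d(\Phi(x+a),\Phi(y+b))$. Since $\Phi$ is simultaneously an algebra isomorphism and an isometry for the two forms, the invariance identity $\mathfrak{B}(u\cdot v,w)=\mathfrak{B}(u,v\cdot w)$ transfers verbatim, so in particular $\mathfrak{B}_d$ is left-invariant. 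Consequently $(A_1\oplus A_1^*,A_1,A_1^*)$ equipped with $\mathfrak{B}_d$ is a standard Manin triple and $\Phi$ is the desired isomorphism of Manin triples, as it respects the direct-sum decomposition, the products, and the forms.

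The step I expect to be the main obstacle is pinning down the identification $A_2\cong A_1^*$ and verifying that the transported product $\circ$ on $A_1^*$, together with the transported product on all of $A_1\oplus A_1^*$, genuinely fits the template of the standard Manin triple, rather than merely being some abstract structure; the crux is that the single nondegenerate pairing coming from $\mathfrak{B}$ must simultaneously intertwine the algebra structures on $A_2$ and $A_1^*$ and realize $\mathfrak{B}$ as $\mathfrak{B}_d$. Once that identification is shown to carry all the data at once, the remaining checks are bookkeeping that follows automatically because $\Phi$ transports every piece of structure coherently.
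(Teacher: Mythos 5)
Your argument is correct and is the standard transport argument; the paper in fact states this proposition without any proof (it is prefaced only by the remark ``by the symmetric role of $A$ and $A^*$''), so your write-up supplies exactly the justification the paper leaves implicit. The one point you should make explicit is that the injection $\psi\colon A_2\to A_1^*$ induced by the nondegenerate pairing is \emph{onto} the full dual $A_1^*$ only under a finite-dimensionality hypothesis (nondegeneracy in both arguments gives embeddings $A_2\hookrightarrow A_1^*$ and $A_1\hookrightarrow A_2^*$, whence $\dim A_1=\dim A_2$ and surjectivity in finite dimension, but this fails in general otherwise); this assumption is implicit throughout the paper's bialgebra sections and should be recorded here. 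With that caveat, your verification that $\Phi=\mathrm{id}\oplus\psi$ simultaneously transports the products, makes $A_1$ and $A_1^*$ subalgebras, and carries $\mathfrak{B}$ to $\mathfrak{B}_d$ (using bilinearity, the two isotropy conditions, and symmetry) is complete, and left-invariance of $\mathfrak{B}_d$ follows verbatim since $\Phi$ is both an algebra isomorphism and an isometry.
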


\begin{pro} \label{Propo} Let $(A,\cdot)$ be a nearly associative algebra.
Suppose that there is a nearly associative algebra structure
$\circ$ on the dual space $A^*$. There exists a
nearly associative algebra structure on the linear space $A\oplus A^*$
such that  $(A\oplus A^*, A,A^*)$ is a standard  Manin triple of
nearly associative  algebras associated to $\mathfrak{B}_d$ defined by
 \eqref{eq:sbl} if and only if $(A, A^*, R_{\cdot}^*,
L_{\cdot}^*, R_{\circ}^*, L_{\circ}^* )$ is a matched pair of
nearly associative algebras.
\end{pro}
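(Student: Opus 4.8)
The plan is to establish a correspondence between the structure on $A\oplus A^*$ making it a standard Manin triple and the matched-pair conditions, exploiting the symmetric bilinear form $\mathfrak{B}_d$ defined in \eqref{eq:sbl}. First I would observe that any nearly associative algebra structure $\ast$ on $A\oplus A^*$ for which $(A,\cdot)$ and $(A^*,\circ)$ are subalgebras must have the form
\begin{eqnarray*}
(x+a)\ast(y+b)=\big(x\cdot y+\ell_B(a)y+\rho_B(b)x\big)+\big(a\circ b+\ell_A(x)b+\rho_A(y)a\big),
\end{eqnarray*}
for suitable linear maps $\ell_A,\rho_A:A\to\End(A^*)$ and $\ell_B,\rho_B:A^*\to\End(A)$, since the $A$-component of a product must land in $A$ up to the cross terms and similarly for the $A^*$-component. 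The heart of the argument is to pin down exactly which maps arise: I would use the left-invariance of $\mathfrak{B}_d$ to force the cross-action maps to be precisely the dual multiplication operators $R_\cdot^*,L_\cdot^*$ and $R_\circ^*,L_\circ^*$.

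The key computation is to translate left-invariance into identities among these maps. Writing out $\mathfrak{B}_d\big((x+a)\ast(y+b),\,z+c\big)=\mathfrak{B}_d\big(x+a,\,(y+b)\ast(z+c)\big)$ and expanding both sides using \eqref{eq:sbl} and the adjointness relations \eqref{dual1}--\eqref{dual2}, I expect each pairing to reduce to a statement of the form $\langle \ell_B(a)y,c\rangle=\langle y,\rho_\cdot^*\cdots\rangle$, etc. Comparing coefficients of the independent pairings $\langle\cdot,\cdot\rangle$ between $A$ and $A^*$, left-invariance should force $\ell_B=R_\circ^*$, $\rho_B=L_\circ^*$, $\ell_A=R_\cdot^*$ and $\rho_A=L_\cdot^*$, which is exactly the assignment made in the statement. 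Thus the \emph{existence} of a left-invariant standard Manin triple structure is equivalent to the assertion that these particular dual maps furnish a nearly associative algebra on $A\oplus A^*$ via the formula above.

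At this point I would invoke Theorem~\ref{theo_matched_pair}: that result says precisely that the formula \eqref{eq_bilinear_1}, with $l_B,r_B,l_A,r_A$ as above, defines a nearly associative algebra on $A\oplus B$ if and only if the six matched-pair identities \eqref{eq_matched_pair_1_1}--\eqref{eq_matched_pair_1_6} hold and the four maps are bimodules. Since $L$ and $R$ are assumed to commute, Remark~\ref{rmk_useful} and Proposition~\ref{Prop_Dual_bimodule} guarantee that $(R_\cdot^*,L_\cdot^*,A^*)$ is a bimodule of $(A,\cdot)$, and symmetrically $(R_\circ^*,L_\circ^*,A)$ is a bimodule of $(A^*,\circ)$, so the bimodule hypotheses of Theorem~\ref{theo_matched_pair} are automatically met. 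Consequently the nearly associative structure on $A\oplus A^*$ exists precisely when the matched-pair relations hold, which is the stated equivalence.

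The main obstacle I anticipate is the bookkeeping in the left-invariance step: one must carefully separate the four independent types of pairings (between $A$ and $A$, $A$ and $A^*$, $A^*$ and $A$, $A^*$ and $A^*$) and verify that left-invariance, rather than full invariance, yields exactly the dualized operators with the \emph{correct} left/right placement (note the swap $l_A=R_\cdot^*$, $r_A=L_\cdot^*$). A subtle point is confirming that $\mathfrak{B}_d$ is automatically symmetric and nondegenerate and that the isotropy of $A$ and $A^*$ holds by construction, so that left-invariance is genuinely the only nontrivial condition distinguishing a standard Manin triple; once this is secured, the reduction to Theorem~\ref{theo_matched_pair} is essentially formal.
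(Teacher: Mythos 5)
The paper states Proposition~\ref{Propo} without any proof, so there is no in-text argument to compare yours against; judged on its own terms, your outline is the standard and correct one. The reduction you describe is right: the subalgebra requirement forces the product on $A\oplus A^*$ to have the form \eqref{eq_bilinear_1} with four unknown cross-action maps, and left-invariance of $\mathfrak{B}_d$ pins those maps down to the dual operators with exactly the stated left/right swap (e.g.\ pairing $x\ast y$ against $c\in A^*$ gives $\langle x\cdot y,c\rangle=\langle x,R_{\cdot}^*(y)c\rangle$, forcing $l_A=R_{\cdot}^*$), after which the question is whether that specific product is nearly associative. The symmetry, nondegeneracy and isotropy of $\mathfrak{B}_d$ are indeed automatic from \eqref{eq:sbl}, as you note.

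Two places in your write-up do not yet close. First, you invoke Theorem~\ref{theo_matched_pair} as an equivalence, but the paper states it only as a sufficiency (matched pair $\Rightarrow$ nearly associative structure on the direct sum). The direction you actually need for ``standard Manin triple $\Rightarrow$ matched pair'' is the converse: that nearly associativity of $\ast$ on $A\oplus A^*$ forces both the bimodule identities and \eqref{eq_matched_pair_1_1}--\eqref{eq_matched_pair_1_6}. This is true, and follows by evaluating $u\ast(v\ast w)-(w\ast u)\ast v=0$ on the various choices of $u,v,w$ from $A$ or $A^*$ and projecting onto the $A$- and $A^*$-components (for instance, $u,v\in A$, $w\in A^*$ yields \eqref{eq_matched_pair_1_1} in the $A$-component and the bimodule relation $l_A(x)l_A(y)=r_A(y)r_A(x)$ in the $A^*$-component), but it is a computation you must actually carry out rather than something the cited theorem hands you. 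Second, you write ``since $L$ and $R$ are assumed to commute'' --- that hypothesis appears in the neighbouring results but not in Proposition~\ref{Propo}. By Proposition~\ref{Prop_Dual_bimodule} the commutation of $L_{\cdot}$ and $R_{\cdot}$ is equivalent to $(R_{\cdot}^*,L_{\cdot}^*,A^*)$ being a bimodule; on one side of the equivalence this is part of the definition of a matched pair, and on the other side it must be extracted from the nearly associativity of $\ast$ via the projection argument just described. It cannot be assumed outright without weakening the proposition. Neither issue derails your approach, but both mark steps where the actual verification is being skipped.
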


\begin{thm}
Let $(A,\cdot)$ be a nearly associative algebra such that the left and right
multiplication operators commute. Suppose that there is
a nearly associative algebra structure $\circ$ on its the dual
space $A^*$ given by $\Delta^*:A^*\otimes A^*\rightarrow A^*$.
 Then,  $(A, A^*, R_{\cdot}^*, L_{\cdot}^*, R_{\circ}^*, L_{\circ}^*)$ is
 a matched pair of the nearly associative algebras
 $(A, \cdot)$ and $(A^*, \circ)$ if and only if
 $\Delta:A\rightarrow A\otimes A$ satisfies the following relations
 \begin{subequations}
 \begin{eqnarray}\label{eq:coalgebra1}
& (R_{\cdot}(x)\otimes \id-\sigma(R_{\cdot}(x)\otimes \id))\Delta(y)+
(\id\otimes L_{\cdot}(y)-\sigma(\id\otimes L_{\cdot}(y)))\Delta(x)=0,
 \\
 \label{eq:coalgebra2}
& \begin{array}{r}
(L_{\cdot}(x)\otimes \id)\Delta(y)+
 \sigma(L_{\cdot}(y)\otimes \id)\Delta(x)=\Delta(x\cdot y) \\
 = \sigma(\id\otimes R_{\cdot}(x))\Delta(y)
 +(\id\otimes R_{\cdot}(y))\Delta(x).
\end{array}
 \end{eqnarray}
 \end{subequations}
\end{thm}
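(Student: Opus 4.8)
The plan is to reduce, via the proposition at the end of Section~\ref{section3}, the matched‑pair condition to the three relations \eqref{eq:matchedpair1}--\eqref{eq:matchedpair3}, and then to dualize each of them into an identity for $\Delta$. Since $L$ and $R$ commute, that proposition guarantees (through Remark~\ref{rmk_useful} and Proposition~\ref{Prop_Dual_bimodule}) that the relevant dual triples are bimodules and asserts that $(A, A^*, R_{\cdot}^*, L_{\cdot}^*, R_{\circ}^*, L_{\circ}^*)$ is a matched pair exactly when \eqref{eq:matchedpair1}--\eqref{eq:matchedpair3} hold. It therefore suffices to prove that, as $x,y$ range over $A$, the system \eqref{eq:matchedpair1}--\eqref{eq:matchedpair3} is equivalent to \eqref{eq:coalgebra1}--\eqref{eq:coalgebra2}.

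The engine of the translation is the defining duality between $\circ$ and $\Delta$, namely $\langle a\circ b, z\rangle = \langle a\otimes b, \Delta(z)\rangle$ for all $a,b\in A^*$, $z\in A$, together with the defining relations \eqref{dual1}--\eqref{dual2} of the dual operators, applied both to $\cdot$ and to $\circ$. Because $A$ is finite‑dimensional, the pairing $A\times A^*\to\K$ is nondegenerate, so an identity in $A$ holds precisely when its pairing against every $b\in A^*$ vanishes, and an identity in $A\otimes A$ holds precisely when its pairing against every $a\otimes b$ vanishes. I would thus pair each of \eqref{eq:matchedpair1}--\eqref{eq:matchedpair3} with an arbitrary $b\in A^*$ and rewrite each summand. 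Writing $\Delta(x)=\sum x_{(1)}\otimes x_{(2)}$, every occurrence of a $\circ$‑product collapses into a $\Delta$; for instance
\[
\langle L_{\circ}^*(L_{\cdot}^*(x)a)\,y,\, b\rangle
=\langle y,\,(L_{\cdot}^*(x)a)\circ b\rangle
=\sum \langle x\cdot y_{(1)},\, a\rangle\,\langle y_{(2)},\, b\rangle
=\langle (L_{\cdot}(x)\otimes\id)\Delta(y),\, a\otimes b\rangle,
\]
whereas a summand in which the slot paired with $a$ ends up in the second tensor position (and conversely) produces a flip $\sigma$.

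Carrying this out term by term, I expect the three correspondences to emerge: \eqref{eq:matchedpair2} becomes $\Delta(x\cdot y)=(L_{\cdot}(x)\otimes\id)\Delta(y)+\sigma(L_{\cdot}(y)\otimes\id)\Delta(x)$, the first equality of \eqref{eq:coalgebra2}; \eqref{eq:matchedpair3} becomes $\Delta(x\cdot y)=\sigma(\id\otimes R_{\cdot}(x))\Delta(y)+(\id\otimes R_{\cdot}(y))\Delta(x)$, the second equality of \eqref{eq:coalgebra2} (after applying $\sigma$ to both sides); and the four‑term relation \eqref{eq:matchedpair1} becomes $\bigl(R_{\cdot}(x)\otimes\id-\sigma(R_{\cdot}(x)\otimes\id)\bigr)\Delta(y)+\bigl(\id\otimes L_{\cdot}(y)-\sigma(\id\otimes L_{\cdot}(y))\bigr)\Delta(x)=0$, which is \eqref{eq:coalgebra1}. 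Since the elements $a\otimes b$ span $A^*\otimes A^*$ and the pairing is nondegenerate, each of these is a genuine equivalence, so both directions of the theorem are obtained simultaneously.

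The main obstacle is purely bookkeeping: correctly tracking, in each of the up‑to‑four terms, which tensor slot is paired with $a$ and which with $b$, so that $\sigma$ is inserted exactly where the two slots are interchanged relative to the reference pairing $\langle\,\cdot\,,\,a\otimes b\rangle$, and keeping the signs in \eqref{eq:matchedpair1} consistent with the bimodule conventions \eqref{eq_matched_pair_1_1}--\eqref{eq_matched_pair_1_3} of Theorem~\ref{theo_matched_pair} under the substitution $l_{A}=R_{\cdot}^*$, $r_{A}=L_{\cdot}^*$, $l_{B}=R_{\circ}^*$, $r_{B}=L_{\circ}^*$. The four‑term relation \eqref{eq:matchedpair1} is the delicate case, since two of its summands are $\sigma$‑twisted and two are not; a single sign or slot error there would destroy the cancellation pattern that yields \eqref{eq:coalgebra1}.
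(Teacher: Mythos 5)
Your proposal is correct and follows essentially the same route as the paper: reduce the matched-pair condition to the relations \eqref{eq:matchedpair1}--\eqref{eq:matchedpair3} via the preceding proposition, then pair each term against arbitrary $a\otimes b\in A^*\otimes A^*$ and use nondegeneracy to convert them into \eqref{eq:coalgebra1}--\eqref{eq:coalgebra2}, with $\sigma$ recording the interchange of tensor slots. The term-by-term pairing computations you outline are exactly those carried out in the paper's proof.
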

\begin{proof}
For any $a, b\in A^*$ and any $x,y\in A$ we have
\begin{align*}
&\langle
(R_{\cdot}(x)\otimes \id)\Delta(y), a\otimes b
\rangle=
\langle
y, (R_{\cdot}^*(x)a)\circ b
\rangle=
\langle
L_{\circ}^*(R_{\cdot}^*(x)a)y, b
\rangle,\cr
&\langle
\sigma(R_{\cdot}(x)\otimes \id)\Delta(y), a\otimes b
\rangle=
\langle
y, (R_{\cdot}^*(x)b)\circ a
\rangle=
\langle
R_{\circ}^*(a)y
, R_{\cdot}^*(x)b
\rangle=
\langle
(R_{\circ}^*(a)y)\cdot x
, b
\rangle,\cr
&\langle
(\id\otimes L_{\cdot}(y))\Delta(x), a\otimes b
\rangle=
\langle
x, a\circ (L_{\cdot}^*(y)b)
\rangle=
\langle
y\cdot (L_{\circ}^*(a)x)
,b
\rangle,\cr
&\langle
\sigma(\id\otimes L_{\cdot}(y))\Delta(x), a\otimes b
\rangle=
\langle
x, b\circ (L_{\cdot}^*(y)a)
\rangle=
\langle
R_{\circ}^*(L_{\cdot}^*(y)a)x
,b
\rangle.
\end{align*}
Hence  \eqref{eq:matchedpair1} is equivalent to  \eqref{eq:coalgebra1}.

Similarly, we have for any $x,y\in A$ and
any $a, b\in A^*$
\begin{eqnarray*}
&&
\langle
\Delta(x\cdot y), a\otimes b
\rangle=
\langle
x\cdot y, a\circ b
\rangle=
\langle
L_{\circ}^*(a)(x\cdot y)
,b
\rangle=
\langle
R_{\circ}^*(b)(x\cdot y)
,a
\rangle,\cr
&&
\langle
(L_{\cdot}(x)\otimes \id)\Delta(y), a\otimes b
\rangle=
\langle
y, (L_{\cdot}^*(x)a)\circ b
\rangle=
\langle
L_{\circ}^*(L_{\cdot}^*(x)a)y
,b
\rangle,\cr
&&
\langle
\sigma(L_{\cdot}(y)\otimes \id)\Delta(x), a\otimes b
\rangle=
\langle
x, (L_{\cdot}^*(y)b)\circ a
\rangle=
\langle
y\cdot (R_{\circ}^*(a)x)
,b
\rangle,\cr
&&
\langle
 \sigma(\id\otimes R_{\cdot}(x))\Delta(y), a\otimes b
\rangle=
\langle
y,
b\circ (R_{\cdot}^*(x)a)
\rangle=
\langle
R_{\circ}^*(R_{\cdot}^*(x)a)y
,b
\rangle,\cr
&&
\langle
(\id\otimes R_{\cdot}(y))\Delta(x), a\otimes b
\rangle=
\langle
x, a\circ (R_{\cdot}^*(y)b)
\rangle=
\langle
(L_{\circ}^*(a)x)\cdot y
,b
\rangle,
\end{eqnarray*}
Therefore,  \eqref{eq:matchedpair2} and   \eqref{eq:matchedpair3}
and  is equivalent to  \eqref{eq:coalgebra2}.
\end{proof}
\begin{rmk}
Obviously, if $L$ and $R$ commute, then $L^*$ and $R^*$ commute too and
if in addition $\gamma: A^*\rightarrow A^*\otimes A^*$ is a linear maps such that
its dual $\gamma^*: A\otimes A\rightarrow A$ defines a nearly associative algebra
structure $\cdot$ on $A$, then $\Delta$ satisfies
 \eqref{eq:coalgebra1} and \eqref{eq:coalgebra2} if and only if $\gamma$
satisfies for all $a,b\in A^*$,
\begin{align*}
& (R_{\circ}(a)\otimes \id-\sigma(R_{\circ}(a)\otimes \id))\gamma(b)+
(\id\otimes L_{\circ}(b)-\sigma(\id\otimes L_{\circ}(b)))\gamma(a)=0,
\\
& (L_{\circ}(x)\otimes \id)\gamma(b)+
 \sigma(L_{\circ}(b)\otimes \id)\gamma(a)= \\
& \qquad \qquad \qquad
\gamma(a\circ b)= \sigma(\id\otimes R_{\circ}(a))\gamma(b)
 +(\id\otimes R_{\circ}(b))\gamma(a).
 \end{align*}
\end{rmk}
\begin{defi}
Let $(A, \cdot)$ be a nearly associative algebra in which the left ($L$) and
right ($R$) multiplication operators commute.
A nearly anti-flexible bialgebra structure is a linear map
$\Delta:A\rightarrow A\otimes A$ such that
\begin{itemize}
\item $\Delta^*:A^*\otimes A^*\rightarrow A^*$ defines
a nearly associative algebra structure on $A,$
\item $\Delta$ satisfies  \eqref{eq:coalgebra2} and \eqref{eq:coalgebra2}.
\end{itemize}
\end{defi}
\begin{thm}
Let $(A,\cdot)$ be a nearly associative algebra in which the left and right
multiplication operators commute.
Suppose that there is a nearly associative algebra structure
on $A^*$ denoted by $\circ$ which defined a linear map
$\Delta: A\rightarrow A\otimes A$. Then the  following conditions
are equivalent:
\begin{enumerate}[label=\upshape{(\roman*)},left=7pt]
\item
$(A\oplus A^*, A,A^*)$ is a standard  Manin triple of
nearly associative  algebras  $(A, \cdot)$ and $(A^*, \circ)$
such that its associated symmetric bilinear form  $\mathfrak{B}_d$ is
defined by  \eqref{eq:sbl}.
\item
$(A, A^*, R_{\cdot}^*,
L_{\cdot}^*, R_{\circ}^*, L_{\circ}^* )$ is a matched pair of
nearly associative algebras $(A, \cdot)$ and $(A^*, \circ)$.
\item $(A, A^*)$ is a nearly associative bialgebra.
\end{enumerate}
\end{thm}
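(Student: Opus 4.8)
The plan is to deduce the three-way equivalence by assembling the two characterizations already proved, so that essentially no fresh computation is needed; I would split the work into (i)$\,\Leftrightarrow\,$(ii) and (ii)$\,\Leftrightarrow\,$(iii). For (i)$\,\Leftrightarrow\,$(ii), the statement is exactly Proposition~\ref{Propo}. Under the standing hypothesis that $L$ and $R$ commute, Remark~\ref{rmk_useful} together with Proposition~\ref{Prop_Dual_bimodule} ensures that $(R_{\cdot}^*, L_{\cdot}^*, A^*)$ and $(L_{\cdot}^*, R_{\cdot}^*, A^*)$ are genuine bimodules of $(A,\cdot)$, so the quadruple $(R_{\cdot}^*, L_{\cdot}^*, R_{\circ}^*, L_{\circ}^*)$ is a legitimate candidate for a matched pair. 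Proposition~\ref{Propo} then asserts precisely that a standard Manin triple structure on $A\oplus A^*$ compatible with the bilinear form $\mathfrak{B}_d$ of \eqref{eq:sbl} exists if and only if this quadruple is a matched pair, so (i) and (ii) are equivalent with nothing further to verify.

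For (ii)$\,\Leftrightarrow\,$(iii), I would invoke the theorem characterizing the matched pair $(A, A^*, R_{\cdot}^*, L_{\cdot}^*, R_{\circ}^*, L_{\circ}^*)$ through the co-multiplication, which shows that this quadruple is a matched pair if and only if $\Delta$ satisfies the coalgebra relations \eqref{eq:coalgebra1} and \eqref{eq:coalgebra2}. On the other side, by the definition of a nearly associative bialgebra, condition (iii) amounts to requiring both that $\Delta^*$ endow $A^*$ with a nearly associative product and that $\Delta$ satisfy \eqref{eq:coalgebra1} and \eqref{eq:coalgebra2}. Since the standing hypothesis already supplies the nearly associative structure $\circ$ on $A^*$ and identifies it with $\Delta^*$, the first requirement in (iii) is automatic, and (iii) collapses to the pair of coalgebra relations. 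Hence (ii) and (iii) are both equivalent to the single condition ``$\Delta$ satisfies \eqref{eq:coalgebra1} and \eqref{eq:coalgebra2}'', which yields (ii)$\,\Leftrightarrow\,$(iii) and closes the chain.

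I expect the only genuinely delicate point to be bookkeeping rather than computation. One must check that the dualization conventions relating $\circ$, $\Delta$, and $\Delta^*$ are used consistently across Proposition~\ref{Propo}, the coalgebra theorem, and the bialgebra definition, and confirm that the commutativity of $L$ and $R$, which passes to $L^*$ and $R^*$, is exactly what makes every dual bimodule structure appearing above well defined. Once these identifications are pinned down, all three conditions are linked through the single pivot ``$\Delta$ satisfies \eqref{eq:coalgebra1}--\eqref{eq:coalgebra2}'', and the equivalence is immediate.
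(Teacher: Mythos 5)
Your proposal is correct and follows exactly the route the paper intends: the paper states this theorem without a separate proof precisely because it is the assembly of Proposition~\ref{Propo} (giving (i)$\Leftrightarrow$(ii)) with the preceding theorem characterizing the matched pair via \eqref{eq:coalgebra1}--\eqref{eq:coalgebra2} and the definition of a nearly associative bialgebra (giving (ii)$\Leftrightarrow$(iii)). Your bookkeeping remarks, including the role of the commutativity of $L$ and $R$ in making the dual bimodules well defined and the reading of the bialgebra definition's repeated reference as \eqref{eq:coalgebra1} and \eqref{eq:coalgebra2}, are exactly the right identifications.
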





\section{Hom-Lie admissible, G-Hom-associative,  flexible and anti-flexible Hom-algebras} \label{sec:homLieadmGHomass}
Hom-Lie admissible algebras along with Hom-associative algebras and more general $G$-Hom-associative algebras were first introduced, and Hom-associative algebras and $G$-Hom-associative algebras were shown to be Hom-Lie admissible in \cite{DassoundoSilvestrov:nearlyhomass:MakhloufSilvestrov:homstructure}.

Hom-algebra is a triple $(A, \mu, \alpha)$ consisting of a linear space $A$ over a field $\mathbb{K}$, a bilinear product $\mu:A\times A\rightarrow A$ and a linear map $\alpha: A\rightarrow A$.

\begin{defi}[\cite{DassoundoSilvestrov:nearlyhomass:MakhloufSilvestrov:homstructure}]
Hom-Lie, Hom-Lie admissible, Hom-associative and $G$-Hom-associative Hom-algebras (over a field $\mathbb{K}$) are defined as follows:
\begin{enumerate}[label=\upshape{\arabic*)},left=5pt]
\item Hom-Lie algebras are triples $( A, [.,.], \alpha)$,
consisting of a linear space $A$ over a field $\mathbb{K}$, bilinear map
{\rm(}bilinear product{\rm)}
$[.,.]: A\times A\rightarrow A$ and a linear map
$\alpha: A\rightarrow A$ satisfying, for all $x, y, z\in  A$,
\begin{align} \label{eq_hom_skewsym_identity}
[x,y]=-[y,x],  & \quad \quad \text{\rm (Skew-symmetry)}\\
\label{eq_hom_jacobi_identity}
[\alpha(x),[y,z]]+[\alpha(y), [z,x]]+ [\alpha(z), [x,y]] = 0. & \quad \quad \text{\rm (Hom-Jacobi identity)}
\end{align}
\item Hom-Lie admissible algebras are Hom-algebras $(A, \mu, \alpha)$ consisting of possibly non-associative algebra $(A, \mu)$ and a linear map $\alpha: A \rightarrow A$, such that
    $(A,[.,.],\alpha)$ is a Hom-Lie algebra, where $[x,y]=\mu(x,y)-\mu(y,x)$ for all $x,y\in  A$.
\item
Hom-associative algebras are triples $(A, \cdot, \alpha)$ consisting of
a linear space $A$ over a field $\mathbb{K}$, a bilinear product $\mu:A\times A\rightarrow A$
and a linear map $\alpha: A\rightarrow A$, satisfying for all $x,y,z\in A$,
\begin{equation}\label{Homass}
\mu(\mu(x, y),\alpha(z))=\mu(\alpha(x),\mu(y,z)). \quad \quad \text{\rm(Hom-associativity)}
\end{equation}
\item
Let $G$ be a subgroup of the permutations group $\mathcal{S}_3$.  Hom-algebra $(A, \mu, \alpha)$ is said to be $G$-Hom-associative if
\begin{equation}\label{admi}
\sum_{\sigma\in G} {(-1)^{\varepsilon ({\sigma})}
(\mu (\mu (x_{\sigma (1)},x_{\sigma (2)}),\alpha
(x_{\sigma (3)}))}-\mu(\alpha(x_{\sigma (1)}),\mu
(x_{\sigma (2)},x_{\sigma (3)}))=0,
\end{equation}
where $x_i \in A, i=1,2,3$ and $(-1)^{\varepsilon
({\sigma})}$ is the signature of the permutation
$\sigma$.
\end{enumerate}
\end{defi}
For any Hom-algebra $(A, \mu, \alpha)$, the Hom-associator, called also $\alpha$-associator of $\mu$, is a trilinear map (ternary product) $a_{\alpha,\mu}:A\times A \times A \rightarrow A$ defined by
\begin{equation*}
a_{\alpha,\mu}(x_1,x_2,x_3)=\mu (\mu
(x_1,x_2),\alpha (x_{3}))-\mu(\alpha(x_{1}),\mu
(x_{2},x_3)) \end{equation*}
for all $x_1,x_2,x_3 \in A$.
The ordinary associator
$$ a_{\mu}(x_1,x_2,x_3)=a_{\id,\mu}(x_1,x_2,x_3)
=\mu((x_1,x_2),(x_{3}))-\mu((x_{1}),\mu(x_{2},x_3))$$
on an algebra $(A,\mu)$ is $\alpha$-associator for the Hom-algebra $(A, \mu, \alpha)=(A,\mu,\id)$ with $\alpha=\id: A \rightarrow A$, the identity map on $A$.

Using Hom-associator $a_{\alpha,\mu}$ and notation $\sigma(x_1,x_2,x_3)=(x_{\sigma(1)},x_{\sigma(2)},x_{\sigma(3)})$, the Hom-associativity \eqref{Homass} can be written as
\begin{equation}\label{Homass_ass}
a_{\alpha,\mu}(x,y,z)=\mu(\mu(x, y),\alpha(z))-\mu(\alpha(x),\mu(y,z))=0, \quad \quad \text{\rm(Hom-associativity)}
\end{equation}
or as  $a_{\alpha,\mu}=0$,
and the $G$-Hom-associativity \eqref{admi} as
\begin{equation}
\sum_{\sigma\in G}{(-1)^{\varepsilon
({\sigma})}a_{\alpha,\mu}\circ \sigma}=0.
\end{equation}
If $\mu$ is the multiplication of a Hom-Lie
admissible Lie algebra, then
\eqref{admi} is equivalent to $[ x,y ]=\mu (x,y)-\mu (y,x )$
satisfying the Hom-Jacobi identity, or
equivalently,
\begin{equation}
\sum_{\sigma\in \mathcal{S}_3}
{(-1)^{\varepsilon({\sigma})} (\mu (\mu
(x_{\sigma (1)},x_{\sigma (2)}),\alpha (x_{\sigma
(3)}))}-\mu(\alpha(x_{\sigma (1)}),\mu (x_{\sigma
(2)},x_{\sigma (3)})))=0,
\end{equation}
which may be written as
\begin{equation}
\sum_{\sigma\in \mathcal{S}_3}{(-1)^{\varepsilon
({\sigma})}a_{\alpha,\mu}\circ \sigma}=0.
\end{equation}
Thus, Hom-Lie admissible Hom-algebras are $\mathcal{S}_3$-associative Hom-algebras.
In general, for all subgroups $G$ of the permutations group $\mathcal{S}_3$, all $G$-Hom-associative Hom-algebras are Hom-Lie admissible, or in other words, all Hom-algebras from the six classes of $G$-Hom-associative Hom-algebras, corresponding to the six subgroups of the symmetric group $\mathcal{S}_3$, are Hom-Lie admissible \cite[Proposition 3.4]{DassoundoSilvestrov:nearlyhomass:MakhloufSilvestrov:homstructure}.
All six subgroups of
$\mathcal{S}_3$ are
\begin{eqnarray*}
& G_1=\mathcal{S}_3(\id)=\{\id\},
G_2=\mathcal{S}_3(\tau_{12})=\{\id,\tau_{1 2}\},
G_3=\mathcal{S}_3(\tau_{23})=\{\id,\tau_{23}\}, &\\
& G_4=\mathcal{S}_3(\tau_{13})=\{\id,\tau_{1 3}\},
G_5=\mathcal{A}_3,
G_6=\mathcal{S}_3 &
\end{eqnarray*}
where $\mathcal{A}_3$
is the alternating group and $\tau_{ij}$ is
the transposition of $i$ and $j$.
{\tiny \begin{table}[ht!]
\caption{$G$-Hom-associative algebras}
\label{Table:GHomass}
\begin{center}
{\small
\begin{tabular}{|c|l|c|}
 \hline
\begin{minipage}{0.10\linewidth}
 \vspace{1mm} Subgroup \\
 of $\mathcal{S}_3$
 \vspace{1mm}
\end{minipage}
 & \begin{minipage}{0.22\linewidth}
 \vspace{1mm}
 \centering{
 Hom-algebras \\
 class names
 }
 \vspace{1mm}
\end{minipage} &
\begin{minipage}{0.30\linewidth}
 \vspace{2mm} \centering{
 Defining Identity \\ (Notation: $\mu(a,b)=ab$)
 }
 \vspace{2mm}
\end{minipage}
\\
 \hline
 \begin{minipage}{0.10\linewidth}
 \vspace{1mm}
$G_1=$\\
$\mathcal{S}_3(\id)$
\end{minipage}
&
\begin{minipage}{0.22\linewidth}
\vspace{2mm}
Hom-associative
\vspace{2mm}
\end{minipage}
& {\small $\alpha(x) (yz)=(xy)\alpha (z)$ } \\
\hline
\begin{minipage}{0.10\linewidth}
 $G_2=$ \\
 $\mathcal{S}_3(\tau_{12})$
\end{minipage}
 &
\begin{minipage}{0.23\linewidth}
\vspace{2mm}
Hom-left symmetric \\
Hom-Vinberg
\vspace{2mm}
\end{minipage}
&
{\small
$ \alpha(x)(yz)-\alpha(y)(xz)= (xy)\alpha(z)-(yx)\alpha (z) $} \\
\hline
 \begin{minipage}{0.10\linewidth}
 $G_3=$ \\
 $\mathcal{S}_3(\tau_{23})$
\end{minipage} &
\begin{minipage}{0.25\linewidth}
\vspace{2mm}
$\mathcal{S}_3(\tau_{23})$-Hom-associative\\
Hom-right symmetric \\
Hom-pre-Lie
\vspace{2mm}
\end{minipage} &
{\small $\alpha(x)(yz)-\alpha(x)(zy)=(xy)\alpha(z)-(xz)\alpha(y)$ } \\
\hline
\begin{minipage}{0.10\linewidth}
 $G_4=$ \\ $\mathcal{S}_3(\tau_{13})$
 \end{minipage} &
 \begin{minipage}{0.25\linewidth}
\vspace{2mm}
$\mathcal{S}_3(\tau_{13})$-Hom-associative\\
Hom-anti-flexible\\
Hom-center symmetric
\vspace{2mm}
\end{minipage}
   &
{\small
$ \alpha(x)(yz)-\alpha(z)(yx)= (xy)\alpha (z)- (zy)\alpha (x) $ } \\
 \hline
\begin{minipage}{0.10\linewidth}
 $G_5=$  $\mathcal{A}_3$
 \end{minipage}
  &
\begin{minipage}{0.23\linewidth}
\vspace{2mm}
$\mathcal{A}_3$-Hom-associative
\vspace{2mm}
\end{minipage} &
{\small
$\begin{array}{l}
\alpha(x)(yz)+\alpha(y)(zx)+\alpha(z)(xy)= \\
(xy)\alpha(z)+(yz)\alpha(x) +(zx)\alpha(y)
\end{array}$ }\\
 \hline
\begin{minipage}{0.10\linewidth}
 $G_6=$  $\mathcal{S}_3$
 \end{minipage}
 & \begin{minipage}{0.23\linewidth}
\vspace{2mm}
Hom-Lie admissible
\vspace{2mm}
\end{minipage}
 &
{\small $\begin{array}{r}
\displaystyle{\sum_{\sigma\in \mathcal{S}_3} (-1)^{\varepsilon({\sigma})}}
\left((x_{\sigma(1)}x_{\sigma(2)})\alpha(x_{\sigma (3)})
\right. \\
\left.
-\alpha(x_{\sigma (1)})(x_{\sigma(2)}x_{\sigma(3)})\right)=0
\end{array}$}
\\
\hline     
    \end{tabular}
    }
  \end{center}
\end{table}
}

The skew-symmetric $G_5$-Hom-associative Hom-algebras and Hom-Lie algebras form the same class of Hom-algebras for linear spaces over fields of characteristic different from $2$,
since then the defining identity of $G_5$-Hom-associative algebras is equivalent to the Hom-Jacobi identity of Hom-Lie algebras when the product $\mu$ is skew-symmetric.

A Hom-right symmetric (Hom-pre-Lie) algebra is the opposite algebra of a Hom-left-symmetric algebra.

Hom-flexible algebras introduced in \cite{DassoundoSilvestrov:nearlyhomass:MakhloufSilvestrov:homstructure} is a generalization to Hom-algebra context of  flexible algebras
\cite{DassoundoSilvestrov:nearlyhomass:Albert:PowerAssRings,
DassoundoSilvestrov:nearlyhomass:Myung:Lieadmalg,
DassoundoSilvestrov:nearlyhomass:Myung:LiealgFlexLieadmalg}.

\begin{defi}[\cite{DassoundoSilvestrov:nearlyhomass:MakhloufSilvestrov:homstructure}]
A Hom-algebra $(A, \mu, \alpha)$ is called
 flexible if
\begin{equation}\label{flexible}
      \mu (\mu (x,y), \alpha (x))=
      \mu (\alpha (x),\mu (y,x)))
\end{equation}
for any $x,y$ in $A$.
 \end{defi}
 Using the $\alpha$-associator
$ a_{\alpha,\mu}(x,y,z)=
 \mu(\mu(x,y),\alpha(z))-\mu(\alpha(x),\mu(y,z)),
$
  the condition \eqref{flexible} may be written as
 \begin{equation}\label{flexible2}
      a_{\alpha,\mu}(x,y,x)=0.
\end{equation}
Since Hom-associator map $a_{\alpha,\mu}$ is a trilinear map,
$$ a_{\alpha,\mu}(z-x,y,z-x)=a_{\alpha,\mu}(z,y,z)+a_{\alpha,\mu}(x,y,x)-a_{\alpha,\mu}(x,y,z)-a_{\alpha,\mu}(z,y,x),
$$
and hence \eqref{flexible2} yields
\begin{equation}\label{flexible3}
a_{\alpha,\mu}(x,y,z)=-a_{\alpha,\mu}(z,y,x)
\end{equation}
in linear spaces over any field, whereas setting $x=z$ in \eqref{flexible3} gives  $2a_{\alpha,\mu}(x,y,x)=0$, implying that \eqref{flexible2} and \eqref{flexible3} are equivalent in linear spaces over fields of characteristic different from $2$.
The equality \eqref{flexible3} written in terms of the Hom-algebra producs $\mu$ is
\begin{equation}\label{flexible4}
\mu(\mu(x,y),\alpha(z))- \mu(\alpha(x),\mu(y,z)) =\mu(\alpha(z),\mu(y,x))-\mu(\mu(z,y),\alpha(x)).
\end{equation}

\begin{defi}
\label{def:antiflexible}
A Hom-algebra $( A,\mu, \alpha)$ is called anti-flexible if
\begin{eqnarray} \label{antiflexible1}
\mu(\mu(x, y), \alpha(z))-\mu(\mu(z, y),\alpha(x))=
\mu(\mu(\alpha(x), \mu(y, z))-\mu(\mu(\alpha(z), \mu(y, x))
\end{eqnarray}
for all $x, y, z\in  A$.
\end{defi}

The equality \eqref{antiflexible1} can be written as
	\begin{eqnarray}\label{antiflexible3}
	a_{\alpha,\mu}(x,y,z)=a_{\alpha,\mu}(z, y, x),
	\end{eqnarray}
in terms of the Hom-associator $a_{\alpha,\mu}(x,y,z)$.

Hom-anti-flexible algebras were first introduced in \cite{DassoundoSilvestrov:nearlyhomass:MakhloufSilvestrov:homstructure} as $\mathcal{S}_3(\tau_{13})$-Hom-associative algebras, the subclass of $G$-Hom-associative algebras corresponding to the subgroup $G=\mathcal{S}_3(\tau_{13}) \subset \mathcal{S}_3$ (see Table \ref{Table:GHomass}).
In view of \eqref{antiflexible3}, anti-flexible algebras have been called Hom-center symmetric in \cite{DassoundoSilvestrov:nearlyhomass:HounkonnouDassoundo:homcensymalgbialg}.

Note that \eqref{antiflexible3} differs from \eqref{flexible3} by absence of the minus sign on the right hand side, meaning that for any $y$, the bilinear map
$a_{\alpha,\mu}(.,y,.)$ is symmetric on Hom-anti-flexible algebras and skew-symmetric on Hom-flexible algebras. Unlike \eqref{flexible} and \eqref{flexible3} in Hom-flexible algebras, in Hom-anti-flexible algebras, \eqref{antiflexible3} is generally not equivalent to the restriction of \eqref{antiflexible3} to $z=x$ trivially identically satisfied for any $x$ and $y$.
In view of \eqref{antiflexible3}, Hom-anti-flexible algebras are called Hom-center-symmetric algebras in \cite{DassoundoSilvestrov:nearlyhomass:HounkonnouDassoundo:homcensymalgbialg}.

\section{Nearly Hom-associative algebras, bimodules and matched pairs} \label{sec:nearlyhomass}
\begin{defi}
A nearly Hom-associative algebra is a triple $( A, \ast, \alpha)$, where
$ A$ is a linear space endowed to the bilinear product
$\ast: A\times A\rightarrow A$ and $\alpha: A\rightarrow A$
is a linear map such that for all $x,y,z\in  A$,
\begin{eqnarray}\label{eq_hom_identity}
\alpha(x)\ast(y\ast z)=(z\ast x)\ast \alpha(y).
\end{eqnarray}
\end{defi}

Nearly Hom-associative algebras are Hom-Lie admissible.
\begin{pro}\label{prop_hom_Lie_adm}
Any nearly Hom-associative algebra $( A, \ast, \alpha)$ is Hom-Lie admissible, that is
$( A, [.,.], \alpha)$ is a Hom-Lie algebra, where $[x,y]=x\ast y-y\ast x$ for all $x,y\in  A$.
\end{pro}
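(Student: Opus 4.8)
The plan is to follow the template of the proof of Proposition~\ref{prop_Lie_adm}, inserting the twisting map $\alpha$ in the appropriate places. A Hom-Lie algebra must satisfy the two conditions \eqref{eq_hom_skewsym_identity} and \eqref{eq_hom_jacobi_identity}. The skew-symmetry \eqref{eq_hom_skewsym_identity} is immediate: from $[x,y]=x\ast y-y\ast x$ one reads off $[x,y]=-[y,x]$ directly, with no appeal to the defining identity \eqref{eq_hom_identity}. Thus the entire content of the proposition lies in verifying the Hom-Jacobi identity \eqref{eq_hom_jacobi_identity}.

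For that, I would first expand the left-hand side of \eqref{eq_hom_jacobi_identity}, namely $[\alpha(x),[y,z]]+[\alpha(y),[z,x]]+[\alpha(z),[x,y]]$, into its twelve constituent monomials by substituting $[y,z]=y\ast z-z\ast y$ together with its two cyclic analogues and then applying bilinearity of $\ast$. Six of these monomials carry the twisting map on the left factor, of the shape $\pm\,\alpha(a)\ast(b\ast c)$, while the remaining six carry it on the right factor, of the shape $\pm\,(b\ast c)\ast\alpha(a)$.

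The key step is to rewrite each of the six ``$\alpha$-on-the-left'' monomials by means of the nearly Hom-associativity identity \eqref{eq_hom_identity}, which converts $\alpha(a)\ast(b\ast c)$ into $(c\ast a)\ast\alpha(b)$. After this substitution every term has the common form $(\,\cdot\ast\cdot\,)\ast\alpha(\,\cdot\,)$, and I would then check that the twelve terms cancel in six pairs. For instance, the monomial arising from $+\alpha(x)\ast(y\ast z)$ becomes $+(z\ast x)\ast\alpha(y)$, which annihilates the monomial $-(z\ast x)\ast\alpha(y)$ already present among the six ``$\alpha$-on-the-right'' terms; the five remaining cancellations are entirely analogous.

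The only thing that could go wrong here is the bookkeeping: one must track the signs and the cyclic ordering of the six basic products $(a\ast b)\ast\alpha(c)$ carefully to confirm that each appears exactly once with a $+$ and once with a $-$ sign. This is the main (and essentially the only) obstacle, and it is a purely mechanical verification; the structural reason behind it is that the single identity \eqref{eq_hom_identity}, once $\alpha$ is stripped away, encodes precisely the relation $a\cdot(b\cdot c)=(c\cdot a)\cdot b$ that made the untwisted computation in Proposition~\ref{prop_Lie_adm} collapse. Hence $(A,[.,.],\alpha)$ satisfies the Hom-Jacobi identity and is a Hom-Lie algebra, so $(A,\ast,\alpha)$ is Hom-Lie admissible.
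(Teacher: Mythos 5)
Your proposal is correct and follows essentially the same route as the paper's proof: expand the Hom-Jacobi sum into twelve monomials and use the identity $\alpha(a)\ast(b\ast c)=(c\ast a)\ast\alpha(b)$ to cancel them in six pairs. The bookkeeping you flag does indeed check out, so there is no gap.
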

\begin{proof}
Let $( A, \ast, \alpha )$ be a nearly Hom-associative algebra. The commutator is skew-symmetric since $[x,y]=x\ast y-y\ast x=-(y\ast x-x\ast y)=-[y,x].$
For all $x,y,z\in  A$,
\begin{align*}{}
& [\alpha(x),[y,z]]+[\alpha(y), [z,x]]+[\alpha(z), [x,y]] \cr
& = [\alpha(x), y\ast z-z\ast y]+[\alpha(y), z\ast x-x\ast z]+[\alpha(z), x\ast y-y\ast x]\cr
& =\alpha(x)\ast (y\ast z)-\alpha(x)\ast(z\ast y)-(y\ast z)\ast \alpha(x)\cr
&  +(z\ast y)\ast \alpha(x)+\alpha(y)\ast (z\ast x)-\alpha(y)\ast (x\ast z)\cr
&  -(z\ast x)\ast \alpha(y)+(x\ast z)\ast \alpha(y)+\alpha(z)\ast(x\ast y)\cr
&  -\alpha(z)\ast (y\ast x)-(x\ast y)\ast \alpha(z)+(y\ast x)\ast \alpha(z)\cr
& =\{\alpha(x)\ast (y\ast z)-(z\ast x)\ast \alpha(y) \}\cr
&  +\{(y\ast x)\ast \alpha(z)-\alpha(x)\ast(z\ast y)\}\cr
&  +\{\alpha(y)\ast (z\ast x)-(x\ast y)\ast \alpha(z)\}\cr
&  +\{\alpha(z)\ast(x\ast y)-(y\ast z)\ast \alpha(x)\}\cr
&  +\{(z\ast y)\ast \alpha(x)-\alpha(y)\ast (x\ast z)\}\cr
&  +\{(x\ast z)\ast \alpha(y)-\alpha(z)\ast (y\ast x) \}=0.
\end{align*}
Therefore, $( A, [.,.], \alpha)$ is a Hom-Lie algebra.
\end{proof}

Commutative nearly Hom-associative algebras are Hom-anti-flexible.
\begin{pro}
	If $( A, \ast, \alpha)$ is a commutative  nearly Hom-associative  algebra,
	then $( A, \ast, \alpha)$ is a Hom-anti-flexible algebra.
\end{pro}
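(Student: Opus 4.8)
The plan is to mimic the untwisted result (the Proposition showing commutative nearly associative algebras are anti-flexible), now carrying the twisting map $\alpha$ through the computation. Writing the Hom-associator of $\ast$ as $a_{\alpha,\ast}(x,y,z)=(x\ast y)\ast\alpha(z)-\alpha(x)\ast(y\ast z)$, the goal is to verify the Hom-anti-flexibility identity \eqref{antiflexible3}, namely $a_{\alpha,\ast}(x,y,z)=a_{\alpha,\ast}(z,y,x)$, for every $x,y,z\in A$.

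First I would rewrite each of the two terms of $a_{\alpha,\ast}(x,y,z)$ by means of nearly Hom-associativity \eqref{eq_hom_identity}, read in the generic form $\alpha(u)\ast(v\ast w)=(w\ast u)\ast\alpha(v)$. Applied directly this gives $\alpha(x)\ast(y\ast z)=(z\ast x)\ast\alpha(y)$, while the substitution $(u,v,w)\mapsto(y,z,x)$ turns the right-hand side pattern into the left-hand one and yields $(x\ast y)\ast\alpha(z)=\alpha(y)\ast(z\ast x)$. Substituting both into the associator collapses it to a single commutator, $a_{\alpha,\ast}(x,y,z)=\alpha(y)\ast(z\ast x)-(z\ast x)\ast\alpha(y)=[\alpha(y),z\ast x]$, where $[u,v]=u\ast v-v\ast u$ is the same commutator used in Proposition~\ref{prop_hom_Lie_adm}.

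Next I would invoke commutativity to replace $z\ast x$ by $x\ast z$, so that $a_{\alpha,\ast}(x,y,z)=[\alpha(y),x\ast z]$. Carrying out the symmetric computation for $a_{\alpha,\ast}(z,y,x)$ — applying \eqref{eq_hom_identity} to the term $(z\ast y)\ast\alpha(x)$ via $(u,v,w)\mapsto(y,x,z)$ to get $\alpha(y)\ast(x\ast z)$, and to $\alpha(z)\ast(y\ast x)$ directly via $(u,v,w)\mapsto(z,y,x)$ to get $(x\ast z)\ast\alpha(y)$ — reduces it to $[\alpha(y),x\ast z]$ as well. Comparing the two expressions establishes \eqref{antiflexible3}, and hence $(A,\ast,\alpha)$ is Hom-anti-flexible.

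The only delicate point is bookkeeping: the identity \eqref{eq_hom_identity} is not symmetric in its three slots and places $\alpha$ differently on its two sides, so one must track carefully which substitution of $(u,v,w)$ carries a given term to the desired shape. Once the two key rewrites $(x\ast y)\ast\alpha(z)=\alpha(y)\ast(z\ast x)$ and $(z\ast y)\ast\alpha(x)=\alpha(y)\ast(x\ast z)$ are in hand, the remainder is a routine substitution followed by a single use of commutativity, exactly paralleling the untwisted case; I expect no genuine obstacle beyond this indexing care.
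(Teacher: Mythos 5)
Your proof is correct and follows essentially the same route as the paper: both rewrites $(x\ast y)\ast\alpha(z)=\alpha(y)\ast(z\ast x)$ and $\alpha(x)\ast(y\ast z)=(z\ast x)\ast\alpha(y)$ are exactly the paper's first step, and your common value $[\alpha(y),x\ast z]$ is precisely the paper's middle line $\alpha(y)\ast(x\ast z)-(x\ast z)\ast\alpha(y)$, reached after the single use of commutativity. The only cosmetic difference is that you reduce both associators to this commutator and compare, while the paper chains the equalities straight from $a_{\alpha,\ast}(x,y,z)$ to $a_{\alpha,\ast}(z,y,x)$.
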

\begin{proof}
In a commutative nearly Hom-associative algebra $( A, \ast, \alpha)$.
\begin{align*}
a_{\alpha,\ast}(x,y,z) & = (x\ast y)\ast\alpha(z)- \alpha(x)\ast(y\ast z)&& \\
&= \alpha(y)\ast(z\ast x)-( z\ast x)\ast\alpha(y)  && \text{(nearly Hom-associativity)}\\
&= \alpha(y)\ast(x\ast z)-( x\ast z)\ast\alpha(y)   && \text{(commutativity)}\\
&=(z\ast y)\ast\alpha(x)- \alpha(z)\ast(y\ast x)  && \text{(nearly Hom-associativity)}\\
&=a_{\alpha,\ast}(z,y,x). &&
\end{align*}
So any commutative nearly Hom-associative   algebra is a Hom-anti-flexible  algebra.
\end{proof}

\begin{defi}
	A bimodule of a nearly Hom-associative   algebra $( A, \ast, \alpha)$ is a quadruple
	$(l, r, V, \varphi)$, where $V$  is a linear space,  $l,r: A\rightarrow \End(V)$ are two linear maps
	and $\varphi\in \End(V)$ satisfying the relations, for all $x,y\in  A$,
\begin{subequations}
	\begin{eqnarray}\label{eq_hom_bimodule0}
\varphi\circ l(x) = l({\alpha(x)})\circ \varphi, &&
	\varphi\circ r(x) = r({\alpha(x)})\circ \varphi,
	\\
\label{eq_hom_bimodule1}
	l({\alpha(x)})\circ l(y)&=&r({\alpha(y)})\circ r(x),
	\\
\label{eq_hom_bimodule2}
	l({\alpha(x)})\circ r(y)&=&l({y\ast x})\circ \varphi,
	\\
\label{eq_hom_bimodule3}
	r({\alpha(x)})\circ l(y)&=&r({x\ast y})\circ \varphi.
	\end{eqnarray}
\end{subequations}
\end{defi}
\begin{pro}\label{prop_hom_bimodule}
	Consider a nearly Hom-associative     $( A, \ast, \alpha)$. Let $l,r: A\rightarrow \End(V)$  be two linear maps such that $V$ is a linear space and $\varphi\in \End(V)$.
The quadruple $(l,r,V, \varphi)$ is a bimodule of
	$( A, \ast, \alpha)$ if and only if there is a structure of a nearly Hom-associative   algebra $\star$ on $ A\oplus V$ given by,
	for all $x,y\in  A$ and all  $u,v\in V$,
	\begin{eqnarray}
\label{eq_rpop_hom_bimodule1}
	(\alpha\oplus\varphi)(x+u)&=&\alpha(x)+\varphi(u),
	\\
\label{eq_prop_hom_bimodule2}
	(x+u)\star(y+v)&=&(x\ast y)+(l(x)v+r(y)u).
	\end{eqnarray}
\end{pro}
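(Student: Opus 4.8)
The plan is to test the defining identity \eqref{eq_hom_identity} on the candidate triple $(A\oplus V,\star,\alpha\oplus\varphi)$ and to read off, component by component in the direct sum, exactly the bimodule axioms. Throughout I would use that every element of $A\oplus V$ decomposes uniquely as $x+u$ with $x\in A$ and $u\in V$, so that an identity in $A\oplus V$ is equivalent to the conjunction of its $A$-part and its $V$-part, and that the three vectors $u,v,w\in V$ appearing in a triple product may be chosen arbitrarily and independently of one another and of $x,y,z\in A$.

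Concretely, using only \eqref{eq_rpop_hom_bimodule1} and \eqref{eq_prop_hom_bimodule2}, I would expand the two nested $\star$-products:
\begin{align*}
(\alpha\oplus\varphi)(x+u)\star\bigl((y+v)\star(z+w)\bigr)
&= \alpha(x)\ast(y\ast z) + l(\alpha(x))l(y)\,w \\
&\quad{}+ l(\alpha(x))r(z)\,v + r(y\ast z)\varphi(u), \\
\bigl((z+w)\star(x+u)\bigr)\star(\alpha\oplus\varphi)(y+v)
&= (z\ast x)\ast\alpha(y) + l(z\ast x)\varphi(v) \\
&\quad{}+ r(\alpha(y))l(z)\,u + r(\alpha(y))r(x)\,w.
\end{align*}
The $A$-parts coincide precisely because $\alpha(x)\ast(y\ast z)=(z\ast x)\ast\alpha(y)$, i.e.\ by the nearly Hom-associativity of $(A,\ast,\alpha)$ assumed in the hypothesis. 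Equating the $V$-parts and invoking the independence of $u,v,w$, I would match the operators multiplying $w$, $v$ and $u$ separately, which produces
\[
l(\alpha(x))l(y)=r(\alpha(y))r(x),\qquad
l(\alpha(x))r(z)=l(z\ast x)\varphi,\qquad
r(y\ast z)\varphi=r(\alpha(y))l(z).
\]
Up to relabeling the dummy variables these are exactly \eqref{eq_hom_bimodule1}, \eqref{eq_hom_bimodule2} and \eqref{eq_hom_bimodule3}; reading the computation backwards gives the converse. Hence the nearly Hom-associativity of $\star$ is equivalent to \eqref{eq_hom_bimodule1}--\eqref{eq_hom_bimodule3}.

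It remains to account for the compatibility conditions \eqref{eq_hom_bimodule0}, which I would obtain as the $V$-part of the multiplicativity of the twisting map $\alpha\oplus\varphi$ for $\star$. Expanding $(\alpha\oplus\varphi)\bigl((x+u)\star(y+v)\bigr)$ and $(\alpha\oplus\varphi)(x+u)\star(\alpha\oplus\varphi)(y+v)$ and comparing the $V$-parts yields $\varphi\circ l(x)=l(\alpha(x))\circ\varphi$ and $\varphi\circ r(x)=r(\alpha(x))\circ\varphi$, the $A$-part reducing to the multiplicativity $\alpha(x\ast y)=\alpha(x)\ast\alpha(y)$ on $A$; this is the point at which one must read the phrase "structure of a nearly Hom-associative algebra on $A\oplus V$" as carrying the multiplicative twisting $\alpha\oplus\varphi$, so that \eqref{eq_hom_bimodule0} is genuinely forced. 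I expect no conceptual difficulty here: the only obstacle is bookkeeping — expanding the nested $\star$-products without index errors and correctly isolating the operator acting on each of the three independent vectors — after which the direct-sum splitting together with a single linear-independence argument finishes both implications.
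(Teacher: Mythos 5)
Your expansion of the two nested $\star$-products is correct, and matching the operators acting on the independent vectors $w$, $v$, $u$ does produce exactly \eqref{eq_hom_bimodule1}, \eqref{eq_hom_bimodule2} and \eqref{eq_hom_bimodule3}, with the $A$-component handled by the nearly Hom-associativity \eqref{eq_hom_identity} of $(A,\ast,\alpha)$. Note that the paper states Proposition~\ref{prop_hom_bimodule} without proof, so there is no argument to compare against line by line; your computation is the natural one and mirrors the proof the paper does give for the untwisted analogue in Section~\ref{section3} (which, incidentally, is only stated and proved in one direction there). The one substantive point is the one you flag yourself: the defining identity of a nearly Hom-associative algebra on $A\oplus V$ captures only \eqref{eq_hom_bimodule1}--\eqref{eq_hom_bimodule3}, and the compatibility conditions \eqref{eq_hom_bimodule0} are genuinely independent of them, so the ``only if'' direction of the proposition fails under the paper's literal definition of a nearly Hom-associative algebra, which imposes no multiplicativity on the twisting map. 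Your resolution --- reading ``structure of a nearly Hom-associative algebra'' as carrying a multiplicative twisting $\alpha\oplus\varphi$, whose $V$-component is precisely \eqref{eq_hom_bimodule0} and whose $A$-component is multiplicativity of $\alpha$ --- is the standard convention in the Hom-algebra literature and is the only reading under which the stated equivalence is true; it would be worth making that hypothesis explicit rather than leaving it implicit in the phrase ``structure of''. With that caveat recorded, your argument is complete and correct in both directions.
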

\begin{defi}\label{dfn_representation_hom_Lie}
	A representation of a Hom-Lie algebra $(\mathcal{G},[.,.]_{_\mathcal{G}}, \alpha_{_\mathcal{G}})$ on a linear space $V$ with respect
	to $\psi\in \End(V)$ is a linear map $\rho_{_\mathcal{G}}:\mathcal{G}\rightarrow \End(V)$ obeying for all $x, y\in \mathcal{G}$,
\begin{eqnarray}\label{eq_representation_hom_Lie1}
\rho_{_\mathcal{G}}(\alpha_{_\mathcal{G}}(x))\circ \psi &=& \psi\circ \rho_{_\mathcal{G}}(x), \\
\label{eq_representation_hom_Lie2}
	\rho_{_\mathcal{G}}([x,y]_{_\mathcal{G}})\circ\psi &=&\rho_{_\mathcal{G}}(\alpha_{_\mathcal{G}}(x))\circ \rho_{_\mathcal{G}}(y)-\rho_{_\mathcal{G}}(\alpha_{_\mathcal{G}}(y))\circ\rho_{_\mathcal{G}}(x).
	\end{eqnarray}
\end{defi}
\begin{pro}\label{propo_represention_Lie_sub_adjacent_hom_algebra}
	Let $(A, \cdot, \alpha)$ be a nearly Hom-associative   algebra and $V$ be a finite-dimensional linear
	space over the field $\mathbb{K}$ such that
	$(l, r,\varphi, V)$ is a bimodule of $( A, \cdot, \alpha)$, where $l, r:  A \rightarrow \End(V)$ are two linear maps and
	$\varphi\in \End(V)$. Then the linear map
	$l-r:  A \rightarrow \End(V), x \mapsto l(x)-r(x)$ is a  representation of the
	underlying Hom-Lie  algebra $( \mathcal{G}( A),\alpha )$ associated to the nearly Hom-associative algebra $( A, \cdot,\alpha)$.
\end{pro}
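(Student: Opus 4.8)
The plan is to verify directly the two defining conditions of a Hom-Lie representation from Definition~\ref{dfn_representation_hom_Lie}, taking $\psi=\varphi$, $\rho_{_\mathcal{G}}=l-r$ and $\alpha_{_\mathcal{G}}=\alpha$. This parallels the strategy used for the non-Hom case in Proposition~\ref{proposition_representation_Lie}, now keeping careful track of the twisting map $\varphi$ and of the occurrences of $\alpha$ in the bimodule axioms \eqref{eq_hom_bimodule0}--\eqref{eq_hom_bimodule3}.

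First I would establish the intertwining condition \eqref{eq_representation_hom_Lie1}, that is, $(l-r)(\alpha(x))\circ\varphi=\varphi\circ(l-r)(x)$. This follows termwise and immediately from the two relations in \eqref{eq_hom_bimodule0}: subtracting $\varphi\circ r(x)=r(\alpha(x))\circ\varphi$ from $\varphi\circ l(x)=l(\alpha(x))\circ\varphi$ yields exactly the required equality, with no cancellation needed.

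The substantive step is the compatibility condition \eqref{eq_representation_hom_Lie2}, namely $(l-r)([x,y])\circ\varphi=(l-r)(\alpha(x))\circ(l-r)(y)-(l-r)(\alpha(y))\circ(l-r)(x)$ with $[x,y]=x\cdot y-y\cdot x$. I would expand the right-hand side into its eight bilinear terms $l(\alpha(x))l(y)$, $l(\alpha(x))r(y)$, $r(\alpha(x))l(y)$, $r(\alpha(x))r(y)$ together with their $x\leftrightarrow y$ counterparts (with the appropriate signs). The pure $l$-$l$ and $r$-$r$ terms are disposed of by \eqref{eq_hom_bimodule1}: since $l(\alpha(x))l(y)=r(\alpha(y))r(x)$ and, swapping $x$ and $y$, $l(\alpha(y))l(x)=r(\alpha(x))r(y)$, these four terms cancel in pairs. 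The four remaining mixed terms $l(\alpha(x))r(y)$, $r(\alpha(x))l(y)$ and their swaps are rewritten via \eqref{eq_hom_bimodule2} and \eqref{eq_hom_bimodule3} as $l(y\cdot x)\varphi$, $r(x\cdot y)\varphi$, $l(x\cdot y)\varphi$, $r(y\cdot x)\varphi$, which reassemble with their signs into $\bigl(l(x\cdot y)-l(y\cdot x)-r(x\cdot y)+r(y\cdot x)\bigr)\varphi=(l-r)([x,y])\circ\varphi$, matching the left-hand side.

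I expect the only delicate point to be bookkeeping: tracking the correct placement of $\alpha$ inside $l$ and $r$, the correct ordering of the products $x\cdot y$ versus $y\cdot x$ when invoking \eqref{eq_hom_bimodule2}--\eqref{eq_hom_bimodule3}, and ensuring that the $x\leftrightarrow y$ symmetric versions of \eqref{eq_hom_bimodule1} are used so that all eight quadratic terms are accounted for. There is no genuine obstacle beyond this careful accounting; the twisting map $\varphi$ emerges uniformly on the right of every rewritten term, which is exactly the shape that \eqref{eq_representation_hom_Lie2} demands.
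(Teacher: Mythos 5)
Your proposal is correct and follows essentially the same route as the paper's proof: condition \eqref{eq_representation_hom_Lie1} by subtracting the two relations in \eqref{eq_hom_bimodule0}, and condition \eqref{eq_representation_hom_Lie2} by expanding into eight terms, cancelling the $l$--$l$ and $r$--$r$ pairs via \eqref{eq_hom_bimodule1} and its $x\leftrightarrow y$ swap, and converting the four mixed terms with \eqref{eq_hom_bimodule2}--\eqref{eq_hom_bimodule3} so that $\varphi$ factors out on the right. The sign and ordering bookkeeping you describe matches the paper's computation exactly.
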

\begin{proof}
Let $( A, \cdot, \alpha)$ be a nearly Hom-associative algebra and $V$ a finite-dimensional linear
space over the field $\mathbb{K}$ such that
$(l, r,\varphi, V)$ is a bimodule of $( A, \cdot, \alpha)$, where $l, r:  A \rightarrow \End(V)$ are two linear maps and $\varphi\in \End(V)$. For all $x,y\in  A$,
\begin{align*}
& (l-r)({\alpha (x)})\circ \varphi=l({\alpha (x)})\circ \varphi-r({\alpha (x)})\circ \varphi
=\varphi\circ l(x)-\varphi\circ r(x)=\varphi\circ (l-r)(x),
\cr
&(l-r)({(\alpha (x))})\circ (l-r)(y)- (l-r)({(\alpha (y))})\circ (l-r)(x) \\
& \quad  = l({\alpha (x)})\circ l(y)-l({\alpha (x)})\circ r(y)
-r({\alpha (x)})\circ l(y)+r({\alpha (x)})\circ r(y) \cr
&
\quad \quad -l({\alpha (y)})\circ l(x)+l({\alpha (y)})\circ r(x)+r({\alpha (y)})\circ l(x)-r({\alpha (y)})\circ r(x)\cr
&
\quad=\{l({\alpha (x)})\circ l(y) -r({\alpha (y)})\circ r(x)  \}
 -l({\alpha (x)})\circ r(y)-r({\alpha (x)})\circ l(y)
\cr
&
\quad\quad +\{ r({\alpha (x)})\circ r(y)-l({\alpha (y)})\circ l(x) \}+
r({\alpha (y)})\circ l(x)+l({\alpha (y)})\circ r(x)\cr
&
\quad=r({\alpha (y)})\circ l(x)-l({\alpha (x)})\circ r(y)
+l({\alpha (y)})\circ r(x)-r({\alpha (x)})\circ l(y)
\cr
&
\quad =r({y\cdot x})\circ \varphi-l({y\cdot x})\circ \varphi
+l({x\cdot y})\circ \varphi-r({x\cdot y})\circ \varphi
=(l-r)({[x,y]})\circ\varphi.
\end{align*}
Therefore, \eqref{eq_representation_hom_Lie1} and  \eqref{eq_representation_hom_Lie2} are satisfied.
\end{proof}
\begin{defi}
	Let $\displaystyle (\mathcal{G}, [.,.]_{_\mathcal{G}}, \alpha_{_\mathcal{G}})$ and
	$\displaystyle (\mathcal{H}, [.,.]_{_\mathcal{H}}, \alpha_{_\mathcal{H}})$ be two Hom-Lie algebras. Let
	$\displaystyle \rho_{_\mathcal{H}}: \mathcal{H} \rightarrow \End(\mathcal{G})$ and
	$\displaystyle \mu_{_\mathcal{G}}: \mathcal{G}\rightarrow \End(\mathcal{H})$
	be two Hom-Lie algebra representations, and
	$\alpha_{_\mathcal{G}}: \mathcal{G}\rightarrow \mathcal{G}$ and
$\alpha_{_\mathcal{H}}: \mathcal{H} \rightarrow \mathcal{H}$ two
linear maps such that for all
	$x, y \in \mathcal{G}, a, b \in \mathcal{H},$
	\begin{subequations}
	\begin{eqnarray}\label{eq_matched_Hom-Lie_commu_1}
\begin{array}{lll}
		\mu_{_\mathcal{G}}(\alpha_{_\mathcal{G}}(x))\left[a, b\right]_{_\mathcal{H}}&=&
		\left[\mu_{_\mathcal{G}}(x)a, \alpha_{_\mathcal{H}}(b)\right]_{_\mathcal{H}}+
\left[\alpha_{_\mathcal{H}}(a), \mu_{_ \mathcal{G}}(x)b\right]_{_\mathcal{H}}\\
		&&-\mu_{_\mathcal{G}}(\rho_{_\mathcal{H}}(a)x)(\alpha_{_\mathcal{H}}(b))+
\mu_{_\mathcal{G}}(\rho_{_\mathcal{H}}(b)x)(\alpha_{_\mathcal{H}}(a)),
\end{array}
	\\
\begin{array}{lll}
\label{eq_matched_Hom-Lie_commu_2}
		\rho_{_\mathcal{H}}(\alpha_{_\mathcal{H}}(a))\left[x, y\right]_{_\mathcal{G}}
		&=&\left[\rho_{_\mathcal{H}}(a)x, \alpha_{_\mathcal{G}}(y)\right]_{_\mathcal{G}}
		+\left[\alpha_{_\mathcal{G}}(x), \rho_{_\mathcal{H}}(a)y\right]_{_\mathcal{G}}\\
		&&-\rho_{_\mathcal{H}}(\mu_{_\mathcal{G}}(x)a)(\alpha_{_\mathcal{G}}(y))+
\rho_{_\mathcal{H}}(\mu_{_\mathcal{G}}(y)a)(\alpha_{_\mathcal{G}}(x)).
\end{array}
	\end{eqnarray}
	\end{subequations}
	Then, $\displaystyle( \mathcal{G}, \mathcal{H}, \mu, \rho, \alpha_{_\mathcal{G}}, \alpha_{_\mathcal{H}})$ is called a matched pair of
	the Hom-Lie algebras
	$\displaystyle  {\mathcal{G}}$ and $\displaystyle  \mathcal{H}$,
	and  denoted by
$\displaystyle  \mathcal{H} \bowtie_{\mu_{_\mathcal{G}}}^{\rho_{_\mathcal{H}}} {\mathcal{G}}.$
	In this case,
$\displaystyle (\mathcal{G}\oplus \mathcal{H}, [.,.]_{_{\mathcal{G}\oplus \mathcal{H}}},
\alpha_{_\mathcal{G}}\oplus\alpha_{_\mathcal{H}})$ defines a Hom-Lie algebra, where
	\begin{eqnarray}
	[(x+a),  (y+b)]_{_{\mathcal{G}\oplus \mathcal{H}}}=[x, y]_{_\mathcal{G}}+\rho_{_\mathcal{H}}(a)y-\rho_{_\mathcal{H}}(b)x+[a, b]_{_\mathcal{H}}+\mu_{_\mathcal{G}}(x)b-\mu_{_\mathcal{G}}(y)a.
	\end{eqnarray}
\end{defi}
\begin{thm}
	Let $( A, \cdot, \alpha_{A})$ and $( B, \circ, \alpha_{ B})$ be two nearly Hom-associative   algebras.
	Suppose  there are linear maps $l_{ A}, r_{ A}: A\rightarrow \End( B)$
	and $l_{ B}, r_{ B}: B\rightarrow \End( A)$ such that $(l_{ A}, r_{ A}, B, \alpha_{ B})$ and
	$(l_{ B}, r_{ B}, A, \alpha_{ A})$ are  bimodules of the nearly Hom-associative   algebras $( A,\cdot,  \alpha_{ A})$ and $( B, \circ, \alpha_{ B})$, respectively and
	satisfying the following conditions for all $x, y\in  A$ and  $a, b \in  B:$
\begin{subequations}
	 \begin{eqnarray}\label{eq_Hom-matched_1}
	 \alpha_{ A}(x)\cdot (r_{ B}(a)y) +(r_{ B}(l_{ A}(y)a)\alpha_{ A}(x)-(l_{ B}(a)x)\cdot \alpha_{ A}(y)
	-l_{ B}(r_{ A}(x)a)\alpha_{ A}(y)=0, && \quad
	\\
\label{eq_Hom-matched_2}
	 \alpha_{ A}(x)\cdot (l_{ B}(a)y)+r_{ B}(r_{ A}(y)a)\alpha_{ A}(x)-r_{ B}(\alpha_{ B}(a))(y\cdot x)=0,&&
	\\
\label{eq_Hom-matched_3}
	l_{ B}(\alpha_{ B}(a))(x\cdot y)-(r_{ B}(a)y)\cdot\alpha_{ A}(x)-l_{ B}(l_{ A}(y)a)\alpha_{ A}(x)=0,&&
	\\
\label{eq_Hom-matched_4}
	\alpha_{ B}(a)\circ(r_{ A}(x)b)+r_{ A}(l_{ B}(b)x)\alpha_{ B}(a)
	-(l_{ A}(x)a)\circ \alpha_{ B}(b)-l_{ A}(r_{ B}(a)x)\alpha_{ B}(b)=0,&&
	\\
\label{eq_Hom-matched_5}
	\alpha_{ B}(a)\circ(l_{ A}(x)b)+r_{ A}(r_{ B}(b)x)\alpha_{ B}(a)
	-r_{ A}(\alpha_{ A}(x))(b\circ a)=0,&&
	\\
\label{eq_Hom-matched_6}
	l_{ A}(\alpha_{ A}(x))(b\circ a)-(r_{ A}(x)a)\circ \alpha_{ B}(b)
	-l_{ A}(l_{ B}(a)x)\alpha_{ B}(b)=0.&&
	\end{eqnarray}
\end{subequations}
	Then, there is  a bilinear product defined on $ A\oplus B$ for all $x,y\in  A$, and all
	$a,b\in  B$, by
	\begin{eqnarray}\label{eq_hom_csa}
	(x+a)\ast(y+b)=(x\cdot y+l_{ B}(a)y+r_{ B}(b)x)+(a\circ b+l_{ A}(x)b+r_{ A}(y)a)
	\end{eqnarray}
	such that $( A\oplus B, \ast, \alpha_{ A}\oplus\alpha_{ B})$ is a
	nearly Hom-associative   algebra.
\end{thm}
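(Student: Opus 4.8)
The plan is to verify directly that $(A\oplus B,\ast,\alpha_{A}\oplus\alpha_{B})$ satisfies the defining identity \eqref{eq_hom_identity} of a nearly Hom-associative algebra, proceeding exactly as in the proof of Theorem~\ref{theo_matched_pair} but now carrying the twisting maps through. Writing a generic element of $A\oplus B$ as $x+a$ with $x\in A$, $a\in B$, and setting $X=x+a$, $Y=y+b$, $Z=z+c$, I would compute the two sides $(\alpha_{A}\oplus\alpha_{B})(X)\ast(Y\ast Z)$ and $(Z\ast X)\ast(\alpha_{A}\oplus\alpha_{B})(Y)$ by repeated use of the product \eqref{eq_hom_csa}, then compare them componentwise in $A$ and in $B$. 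Since $\alpha_{A}\oplus\alpha_{B}$ acts diagonally, each side expands into nine terms in its $A$-component and nine in its $B$-component.

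First I would settle the $A$-component. The purely $A$-valued terms $\alpha_{A}(x)\cdot(y\cdot z)$ and $(z\cdot x)\cdot\alpha_{A}(y)$ cancel by the nearly Hom-associativity of $(A,\cdot,\alpha_{A})$ itself. The three terms in which a $B$-action meets another $B$-action, directly or through a $\circ$-product argument, namely $l_{B}(\alpha_{B}(a))l_{B}(b)$, $l_{B}(\alpha_{B}(a))r_{B}(c)$ and $r_{B}(b\circ c)$ applied against $\alpha_{A}(x)$, are matched to their counterparts on the opposite side by the bimodule relations \eqref{eq_hom_bimodule1}, \eqref{eq_hom_bimodule2} and \eqref{eq_hom_bimodule3} for $(l_{B},r_{B},A,\alpha_{A})$, with $\varphi=\alpha_{A}$. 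The remaining six mixed terms, each carrying one $\cdot$-product together with a $B$-action or a cross action $l_{A},r_{A}$, are precisely the terms in the compatibility conditions \eqref{eq_Hom-matched_1}, \eqref{eq_Hom-matched_2}, \eqref{eq_Hom-matched_3}: condition \eqref{eq_Hom-matched_1} pairs $\alpha_{A}(x)\cdot(r_{B}(c)y)$ and $r_{B}(l_{A}(y)c)\alpha_{A}(x)$ against $(l_{B}(c)x)\cdot\alpha_{A}(y)$ and $l_{B}(r_{A}(x)c)\alpha_{A}(y)$; condition \eqref{eq_Hom-matched_2} collects $\alpha_{A}(x)\cdot(l_{B}(b)z)$ and $r_{B}(r_{A}(z)b)\alpha_{A}(x)$ against $r_{B}(\alpha_{B}(b))(z\cdot x)$; and condition \eqref{eq_Hom-matched_3} matches $l_{B}(\alpha_{B}(a))(y\cdot z)$ against $(r_{B}(a)z)\cdot\alpha_{A}(y)$ and $l_{B}(l_{A}(z)a)\alpha_{A}(y)$. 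Thus the $A$-components of the two sides coincide.

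The $B$-component is handled the same way: its purely $B$-valued terms cancel by nearly Hom-associativity of $(B,\circ,\alpha_{B})$, its double-action terms by the bimodule relations \eqref{eq_hom_bimodule1}--\eqref{eq_hom_bimodule3} for $(l_{A},r_{A},B,\alpha_{B})$ (now with $\varphi=\alpha_{B}$), and its mixed terms by the three remaining conditions \eqref{eq_Hom-matched_4}, \eqref{eq_Hom-matched_5}, \eqref{eq_Hom-matched_6}. In fact the entire $B$-computation is obtained from the $A$-computation by the formal interchange $A\leftrightarrow B$, $\cdot\leftrightarrow\circ$, $\alpha_{A}\leftrightarrow\alpha_{B}$, $l_{A}\leftrightarrow l_{B}$, $r_{A}\leftrightarrow r_{B}$, under which the product \eqref{eq_hom_csa} is invariant and \eqref{eq_Hom-matched_1}--\eqref{eq_Hom-matched_3} map onto \eqref{eq_Hom-matched_4}--\eqref{eq_Hom-matched_6}; so it suffices to record it by this symmetry rather than repeating the expansion. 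Combining the two components gives \eqref{eq_hom_identity} for $\ast$, and since $\alpha_{A}\oplus\alpha_{B}$ is linear, $(A\oplus B,\ast,\alpha_{A}\oplus\alpha_{B})$ is nearly Hom-associative.

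The main obstacle is organizational rather than conceptual: each side produces eighteen terms, and their pairing is governed by the non-symmetric identity \eqref{eq_hom_identity}, in which the twist sits on $x$ on the left but on $y$ on the right, so one must check that every occurrence of $\alpha_{A}$ or $\alpha_{B}$ lands on the correct argument and that each mixed term is routed to the unique condition among \eqref{eq_Hom-matched_1}--\eqref{eq_Hom-matched_6} that absorbs it, with no residue. This exact accounting is the real content and is what forces the precise shape of \eqref{eq_Hom-matched_1}--\eqref{eq_Hom-matched_6}; the bimodule axioms \eqref{eq_hom_bimodule1}--\eqref{eq_hom_bimodule3} and the individual nearly Hom-associativity of $A$ and $B$ dispose of everything else. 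Relation \eqref{eq_hom_bimodule0}, though part of the bimodule hypotheses, is not invoked in this particular computation, since $\alpha_{A}\oplus\alpha_{B}$ need not be multiplicative for $(A\oplus B,\ast,\alpha_{A}\oplus\alpha_{B})$ to be nearly Hom-associative.
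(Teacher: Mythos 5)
Your proposal is correct and follows essentially the same route as the paper's own proof: expand $(\alpha_A\oplus\alpha_B)(X)\ast(Y\ast Z)$ and $(Z\ast X)\ast(\alpha_A\oplus\alpha_B)(Y)$ via \eqref{eq_hom_csa}, cancel the pure terms by nearly Hom-associativity of $A$ and $B$, absorb the double-action terms with \eqref{eq_hom_bimodule1}--\eqref{eq_hom_bimodule3}, and route the six mixed terms per component to \eqref{eq_Hom-matched_1}--\eqref{eq_Hom-matched_6}. Your term-by-term accounting (and the observation that \eqref{eq_hom_bimodule0} is not needed) is in fact more explicit than the paper's, which records the two expansions and asserts the cancellation.
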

\begin{proof}

Let $( A, \cdot, \alpha_{ A})$, $( B, \circ, \alpha_{ B})$ be two nearly Hom-associative   algebras,
$(l_{ A}, r_{ A},  B, \alpha_{ B})$ a bimodule of $( A, \cdot, \alpha_{ A})$ and
$(l_{ B}, r_{ B},  A, \alpha_{ A})$ a bimodule of $( B, \circ, \alpha_{ B})$.
For all $x,y\in  A$ and all $a,b\in  B$,
\begin{align*}
&(\alpha_{ A}(x)+\alpha_{ B}(a))\ast ((y+b)\ast (z+c))\cr
&\quad =\{(\alpha_{ A}(x))\cdot (l_{ B}(b)z)+r_{ B}(r_{ A}(z)b)\cdot (\alpha_{ A}(x))\}\cr
&\quad\quad +\{(\alpha_{ A}(x))\cdot(r_{ B}(c)y)+r_{ B}(l_{ A}(y)c)\alpha_{ A}(x)\}\cr
&\quad\quad +(\alpha_{ A}(x))\cdot (y\cdot z)+
l_{ B}(\alpha_{ B}(a))(y\cdot z)+l_{ B}(\alpha_{ B}(a))(l_{ B}(b)z)\cr
&\quad\quad +l_{ B}(\alpha_{ B}(a))(r_{ B}(c)y)+
r_{ B}(b\circ c)(\alpha_{ A}(x))\cr
&\quad\quad +
\{(\alpha_{ B}(a))\circ  (l_{ A}(y)c) +     r_{ A}(r_{ B}(c)y)\alpha_{ B}(a)\}\cr
&\quad\quad +\{ (\alpha_{ B}(a))\circ (r_{ A}(z)b)  + r_{ A}(l_{ B}(b)z)\alpha_{ B}(a)    \}\cr
&\quad\quad +(\alpha_{ B}(a))\circ (b\circ c)+r_{ A}(y\cdot z)\alpha_{ B}(a)+
l_{ A}(\alpha_{ A}(x))(b\circ c)\cr
&\quad\quad +l_{ A}(\alpha_{ A}(x))(l_{ A}(y)c)   +l_{ A}(\alpha_{ A}(x))(r_{ A}(z)b);
\\
&((z+c)\ast (x+a)  )\ast (\alpha_{ A}(y)+\alpha_{ B}(b)) \cr
&\quad =
\{(l_{ B}(c)x)\cdot (\alpha_{ A}(y)) +l_{ B}(r_{ A}(x)c)\alpha_{ A}(y)\}\cr
&\quad\quad +\{l_{ B}(l_{ A}(z)a)\alpha_{ A}(y)+(l_{ B}(c)x)\cdot \alpha_{ A}(y) \}\cr
&\quad\quad +(z\cdot x)\cdot (\alpha_{ A}(y))+l_{ B}(c\circ a)(\alpha_{ A}(y))+
r_{ B}(\alpha_{ B}(b)(z\cdot x)\cr
&\quad\quad +r_{ B}(\alpha_{ B}(b)(l_{ B}(c)x)+
r_{ B}(\alpha_{ B}(b)(r_{ B}(a)z)\cr
&\quad\quad +
\{(l_{ A}(z)a)\circ (\alpha_{ B}(b))+l_{ A}(r_{ B}(a)z)\alpha_{ B}
\} \cr
&\quad\quad +
\{    (r_{ A}(x)c)\circ (\alpha_{ B}(b))+l_{ A}(l_{ B}(c)x)\alpha_{ B}(b) \}\cr
&\quad\quad +  (c\circ a)\circ (\alpha_{ B}(b))+
l_{ A}(z\cdot x)(\alpha_{ B}(b))+
r_{ A}(\alpha_{ A}(y))(c\circ a)\cr
&\quad\quad +r_{ A}(\alpha_{ A}(y))(l_{ A}(z)a)
+r_{ A}(\alpha_{ A}(y))(r_{ A}(x)c)
\end{align*}
Using  \eqref{eq_Hom-matched_1}~-~\eqref{eq_Hom-matched_6} and the fact
that $(l_{ A}, r_{ A},  B, \alpha_{ B})$ and $(l_{ B}, r_{ B},  A, \alpha_{ A})$ are bimodules of the nearly Hom-associative   algebras  $( A, \cdot, \alpha_{ A} )$ and $( B, \circ, \alpha_{ B} )$, respectively,
we obtain that $( A\oplus B, \ast, \alpha_{ A}\oplus\alpha_{ B})$ is a nearly associative  algebra.
\end{proof}
\begin{defi}
	A matched pair of the nearly Hom-associative   algebras
	$(A,\cdot,\alpha_{A})$ and $(B,\circ,\alpha_{B})$ is the high-tuple
	$(A,B,l_{A}, r_{A},\alpha_{B},l_{B}, r_{B},\alpha_{A})$, where
	$l_{A}, r_{A}: A\rightarrow \End(B)$
	and $l_{B}, r_{B}: B\rightarrow \End(A)$ are linear maps such that
$(l_{A},r_{A},B,\alpha_{B})$ and
	$(l_{B}, r_{B},  A, \alpha_{A})$ are  bimodules of the nearly Hom-associative algebras
$( A,\cdot,\alpha_{A})$ and $(B,\circ,\alpha_{B})$, respectively, and satisfying	\eqref{eq_Hom-matched_1}~-~\eqref{eq_Hom-matched_6}.
\end{defi}
\begin{cor}
	Let $(A,B,l_{A},r_{A},\alpha_{B},l_{B},r_{B},\alpha_{A})$ be a matched pair of
	the nearly Hom-associative   algebras
	$(A,\cdot,\alpha_{A})$ and $(B,\circ,\alpha_{B}).$ Then,
$(\mathcal{G}(A),\mathcal{G}(B), l_{A}-r_{A}, l_{B}-r_{B}, \alpha_{A},\alpha_{B})$
	is a matched pair of the  underlying  Hom-Lie algebras
	$\mathcal{G}(A)$ and  $\mathcal{G}(B)$ of the nearly Hom-associative algebras
$(A,\cdot,\alpha_{A})$ and $(B,\circ,\alpha_{B} )$.
\end{cor}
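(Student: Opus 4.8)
The plan is to mirror the proof of the corresponding non-twisted statement (the corollary following Definition~\ref{Dfn_Matched_pair_Lie}), adapting each step to the Hom-setting. First I would fix the dictionary between the two matched-pair structures: under the identifications $\mathcal{G}=\mathcal{G}(A)$, $\mathcal{H}=\mathcal{G}(B)$, $\alpha_{\mathcal{G}}=\alpha_{A}$ and $\alpha_{\mathcal{H}}=\alpha_{B}$, the candidate representations are $\mu_{\mathcal{G}}=l_{A}-r_{A}$ and $\rho_{\mathcal{H}}=l_{B}-r_{B}$, while the two Hom-Lie brackets are $[x,y]_{\mathcal{G}}=x\cdot y-y\cdot x$ and $[a,b]_{\mathcal{H}}=a\circ b-b\circ a$. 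By Proposition~\ref{prop_hom_Lie_adm}, $\mathcal{G}(A)$ and $\mathcal{G}(B)$ are genuine Hom-Lie algebras, so the assertion is well posed.

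The first substantive step is to verify that $l_{A}-r_{A}$ and $l_{B}-r_{B}$ are representations of the underlying Hom-Lie algebras. This is precisely Proposition~\ref{propo_represention_Lie_sub_adjacent_hom_algebra}, applied once to the bimodule $(l_{A},r_{A},B,\alpha_{B})$ of $(A,\cdot,\alpha_{A})$ and once to the bimodule $(l_{B},r_{B},A,\alpha_{A})$ of $(B,\circ,\alpha_{B})$; it supplies both the twist-compatibility \eqref{eq_representation_hom_Lie1} and the bracket identity \eqref{eq_representation_hom_Lie2}, for $\mu_{\mathcal{G}}=l_{A}-r_{A}$ with respect to $\alpha_{B}$ and for $\rho_{\mathcal{H}}=l_{B}-r_{B}$ with respect to $\alpha_{A}$. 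This disposes of the ``representation'' half of the definition of a matched pair of Hom-Lie algebras.

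The remaining step is to derive the two compatibility conditions \eqref{eq_matched_Hom-Lie_commu_1} and \eqref{eq_matched_Hom-Lie_commu_2}. The key observation is that \eqref{eq_matched_Hom-Lie_commu_2} is antisymmetric in $x,y\in A$ while \eqref{eq_matched_Hom-Lie_commu_1} is antisymmetric in $a,b\in B$, and that the three relations \eqref{eq_Hom-matched_1}--\eqref{eq_Hom-matched_3} encode exactly the action of $B$ on $A$ (through $l_{B},r_{B}$ and the product of $A$), whereas \eqref{eq_Hom-matched_4}--\eqref{eq_Hom-matched_6} encode the action of $A$ on $B$. Substituting $\rho_{\mathcal{H}}=l_{B}-r_{B}$ and $\mu_{\mathcal{G}}=l_{A}-r_{A}$ into \eqref{eq_matched_Hom-Lie_commu_2} and expanding every Hom-Lie bracket $[u,v]=u\cdot v-v\cdot u$, I would identify the result as a signed combination of \eqref{eq_Hom-matched_1}--\eqref{eq_Hom-matched_3} together with their images under the swap $x\leftrightarrow y$; symmetrically, \eqref{eq_matched_Hom-Lie_commu_1} is recovered from \eqref{eq_Hom-matched_4}--\eqref{eq_Hom-matched_6} and their swaps $a\leftrightarrow b$. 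The bimodule axioms \eqref{eq_hom_bimodule0}--\eqref{eq_hom_bimodule3} are invoked throughout to move the twisting maps $\alpha_{A},\alpha_{B}$ past $l$ and $r$ and to rewrite the composite operators $l_{B}(\alpha_{B}(a))$, $r_{B}(\alpha_{B}(a))$ and their counterparts, so that the twisted shape of the Hom-Lie conditions is matched exactly.

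The main obstacle is the bookkeeping in this last step. After substituting $l-r$ and expanding both the Hom-Lie brackets and all six defining relations \eqref{eq_Hom-matched_1}--\eqref{eq_Hom-matched_6}, one produces a large number of terms, and the verification reduces to checking that every term not directly accounted for by one of these six relations cancels against another by virtue of the bimodule axioms \eqref{eq_hom_bimodule0}--\eqref{eq_hom_bimodule3} and the skew-symmetry of the two brackets. This is lengthy but entirely routine term-matching rather than a conceptual difficulty: no idea beyond the non-twisted case is needed, only the careful placement of the twisting maps $\alpha_{A}$ and $\alpha_{B}$ in the positions dictated by the Hom-Lie matched-pair identities.
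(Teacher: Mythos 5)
Your proposal follows the same route as the paper: first invoke Proposition~\ref{propo_represention_Lie_sub_adjacent_hom_algebra} to get that $l_{A}-r_{A}$ and $l_{B}-r_{B}$ are representations of the underlying Hom-Lie algebras, then check that the Hom-Lie matched-pair conditions \eqref{eq_matched_Hom-Lie_commu_1}--\eqref{eq_matched_Hom-Lie_commu_2} reduce, after expanding the commutator brackets, to the six relations \eqref{eq_Hom-matched_1}--\eqref{eq_Hom-matched_6}; the paper simply asserts this equivalence where you describe the term-matching explicitly. Your pairing (the identity valued in $\mathcal{G}(A)$ against \eqref{eq_Hom-matched_1}--\eqref{eq_Hom-matched_3}, the one valued in $\mathcal{G}(B)$ against \eqref{eq_Hom-matched_4}--\eqref{eq_Hom-matched_6}) is in fact the consistent one, whereas the paper states the labels the other way around, which appears to be a slip rather than a substantive difference.
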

\begin{proof}

Let $(A,B,l_{A},r_{A},\alpha_{B},l_{B},r_{B},\alpha_{A})$ be a matched pair of nearly Hom-associative   algebras $(A,\cdot,\alpha_{A})$ and $(B,\circ,\alpha_{B})$.
In view of Proposition~\ref{propo_represention_Lie_sub_adjacent_hom_algebra},
the linear maps $l_{A}-r_{A}:  A\longrightarrow \End(B)$ and
$l_{B}-r_{B}: B\longrightarrow\End( A)$ are representations of the underlying Hom-Lie algebras $( \mathcal{G}(A), \alpha_{A})$ and $( \mathcal{G}(B), \alpha_{B})$, respectively.
Therefore, \eqref{eq_matched_Hom-Lie_commu_1} is equivalent to \eqref{eq_Hom-matched_1}~-~\eqref{eq_Hom-matched_3} and similarly, \eqref{eq_matched_Hom-Lie_commu_2} is equivalent to  \eqref{eq_Hom-matched_4}~-~\eqref{eq_Hom-matched_6}.
\end{proof}

\end{document}